\documentclass[11pt]{amsart} \usepackage{amscd,amssymb}   

\usepackage{color}

\definecolor{darkgreen}{cmyk}{1,0,1,.2}
\definecolor{m}{rgb}{1,0.1,1}


\usepackage[matrix, arrow]{xy}  
\usepackage{array} 
\usepackage{pb-diagram} 
 
\textwidth=15.5cm \oddsidemargin=0cm \evensidemargin=0cm 
\textheight=22cm 
  
\begin{document} 

\def\dach{\!\widehat{ \;\;\; }}
\def\A{\mathcal{A}} 
\def\G{\mathcal{G}} 
\def\C{\mathcal{C}} 
\def\H{\mathcal {H}}
\def\d{\operatorname{d}}
\def\im{\operatorname{im}}
\def\iDelta{\stackrel{\circ}{\Delta}}
\def\bDelta{\partial\Delta}
\def\const{\operatorname{const}}

\def\m{{\boldmath m}}
\def\l{{\boldmath l}}
\def\h{{\boldmath h}}
\def\x{{\boldmath x}}
\def\y{{\boldmath y}}
\def\ab{\operatorname{ab}}

\def\tg{\operatorname{tg}}

\def\Call{\mathcal C_{\operatorname{all}}}
\def\Csep{\mathcal C_{\operatorname{sep}}}
\def\Cnuc{\mathcal C_{\operatorname{nuc}}}
\def\Ccom{\mathcal C_{\operatorname{com}}}
\def\Ob{\operatorname{Ob}}
\def\Mor{\operatorname{Mor}} 
\def\Aut{\operatorname{Aut}} 
\def\cell{\operatorname{cell}} 
\def\Cell{\operatorname{Cell}} 
\def\incl{\operatorname{incl}} 
\def\sign{\operatorname{sign}} 
\def\Gx{X\rtimes G}  
\def\Bott{\operatorname{Bott}} \def\CC{\mathbb C} 
\def\ClV{C_{\!{}_{V}}} \def\comp{\operatorname{comp}} 
\def\ddS{\stackrel{\scriptscriptstyle{o}}{S}} 
\def\intW{\stackrel{\scriptscriptstyle{o}}{W}} 
\def\intU{\stackrel{\scriptscriptstyle{o}}{U}} 
\def\intT{\stackrel{\scriptscriptstyle{o}}{T}} \def\E{\mathcal E} 
\def\EE{\mathbb E} \def\EG{\mathcal{E}(G)} 
\def\Egx{\mathcal{E}(G)\times X} 
\def\EGG{\mathcal{E}(\mathcal{G})} \def\EGT{\mathcal{E}(G/G_0)}\def\Eg{\mathcal{E}(G)} 
\def\EGN{\mathcal{E}(G/N)} \def\EH{\mathcal{E}(H)} \def\F{\mathcal F} 
\def\I{{\operatorname{IND}}} \def\Ind{\operatorname{Ind}} 
\def\Id{\operatorname{Id}} 
\def\infl{\operatorname{inf}} \def\k{\operatorname{K}} 
\def\KK{\operatorname{KK}} \def\L{\mathcal L} \def\lk{\langle} 
\def\rk{\rangle} \def\NN{\mathbb N}
\renewcommand{\oplus}{\bigoplus} 
\def\pt{\operatorname{pt}} \def\QQ{\mathbb Q} 
\def\res{\operatorname{res}} \def\RKK{\mathcal{R}\!\operatorname{KK}} 
\def\RR{\mathbb R} \def\sm{\backslash} \def\top{\operatorname{top}} 
\def\ZM{{\mathcal Z}M} \def\ZZ{\mathbb Z} \setcounter{section}{-1} 
\def\Inf{\operatorname{Inf}} \def\Ad{\operatorname{Ad}} 
\def\K{\mathcal K} \def\id{\operatorname{id}} 
\def\U{\mathcal U}
\def\PU{\mathcal PU}
\def\BG{\operatorname{BG}}
\def\eps{\epsilon} 
\def\om{\omega}
\def\F{\mathcal{F}} 
\def\TT{\mathbb T}
\def\Flk{\F_{L,K}} 
\def\mg{\mu_{G,A}} 
\def\mgx{\mu_{\Gx,A}} 
\def\ts{\otimes} 
\def\ga{\gamma} 
\def\Br{\operatorname{Br}}
\def\Ab{\operatorname{Ab}}
\def\ev{\operatorname{ev}}
\def\la{\lambda} 
\def\oDelta{\stackrel{\circ}{\Delta}}
\emergencystretch= 30 pt
\theoremstyle{plain} \newtheorem{thm}{Theorem}[section] 
\newtheorem{cor}[thm]{Corollary} \newtheorem{lem}[thm]{Lemma} 
\newtheorem{prop}[thm]{Proposition} \newtheorem{lemdef}[thm]{Lemma and 
  Definition} \theoremstyle{definition} 
\newtheorem{defn}[thm]{Definition}  
\newtheorem{defremark}[thm]{\bf Definition and Remark} 
\theoremstyle{remark} 
\newtheorem{remark}[thm]{\bf Remark} \newtheorem{ex}[thm]{\bf Example} 
\newtheorem{notations}[thm]{\bf Notations} 
\numberwithin{equation}{section} \emergencystretch 25pt 
\renewcommand{\theenumi}{\roman{enumi}} 
\renewcommand{\labelenumi}{(\theenumi)} \title[On noncommutative torus
bundles]{Principal noncommutative torus
bundles}   
\author[Echterhoff]{Siegfried 
  Echterhoff} 
\address{Westf\"alische Wilhelms-Universit\"at M\"unster, 
  Mathematisches Institut, Einsteinstr. 62 D-48149 M\"unster, Germany} 
\email{echters@math.uni-muenster.de}
\author[Nest]{Ryszard Nest} \address{SNF Center in
Non-Commutative Geometry
Universitetsparken 5
DK-2100 KBH. \O, Denmark}\email{rnest@math.ku.dk}
\author[Oyono-Oyono]{Herve Oyono-Oyono}
\address{Universit\'e Blaise Pascal de  Clermont-Ferrand, Laboratoire
  de Math\'ematiques, Plateau des C\'ezeaux, 63177 Aubi\`ere Cedex,
  France}
\email{oyono@math.cnrs.fr}

\thanks{This work was partially supported by the Deutsche Forschungsgemeinschaft
(SFB 478)}
 
\subjclass[2000]{Primary  19K35, 46L55,  46L80, 46L85
 ; Secondary  14DXX, 46L25, 58B34, 81R60, 81T30}


\keywords{Non-commutative Principal Bundle, $K$-theory, Non-Commutative
  Tori, T-duality}

\begin{abstract}
In this paper we study continuous bundles of C*-algebras which are 
non-commutative analogues of principal torus bundles. We show that 
all such bundles, although in general being very far away from being 
locally trivial bundles, are at least locally $\RKK$-trivial.
Using earlier results of Echterhoff and Williams, we shall give 
a complete classification of principal non-commutative torus bundles 
up to $\TT^n$-equivariant Morita equivalence. We then study 
these bundles as topological fibrations (forgetting the group action)
and give necessary and sufficient conditions for any non-commutative 
principal torus bundle being $\RKK$-equivalent to a commutative one.
As an application of our methods we shall also give a $K$-theoretic 
characterization of those principal $\TT^n$-bundles with $H$-flux,
as studied by Mathai and Rosenberg  which 
{possess} "classical" $T$-duals. 
\end{abstract} 
\maketitle  
 \section{Introduction}
 In this paper we want to start a general study of C*-algebra bundles over locally
 compact base spaces $X$ which are non-commutative analogues of Serre fibrations in 
 topology. Before we shall proof some general results for analysing such bundles in 
 a forthcoming paper \cite{ENO}  (e.g., by proving a general spectral sequence for computing
 the $K$-theory of the algebra of the total bundle) we want to introduce in this 
 paper our most basic (and probably also most interesting) toy examples, namely
the  non-commutative analogues of 
 principal torus bundles as defined below.
 
 Recall that a locally compact principal $\TT^n$-bundle $q:Y\to X$ 
 consists of a locally compact space $Y$ 
 equipped with a free action of $\TT^n$ on $Y$ such that 
 $q:Y\to X$ identifies the orbit space $\TT^n\backslash Y$ with $X$.
 In order to introduce a suitable non-commutative analogue, we 
 recall that from Green's theorem   \cite{Green0} we have 
 $$C_0(Y)\rtimes \TT^n\cong C_0(X,\K)$$
 for the crossed product by the action of $\TT^n$ on $Y$, where $\K$ denotes the algebra 
 of compact operators on the infinite dimensional separable Hilbert space. 
 Using this observation, we introduce NCP $\TT^n$-bundles
as follows

 \begin{defn}\label{defn-nctorus}
 By a (possibly) {\em noncommutative principal  $\TT^n$-bundle}
 (or {\em NCP $\TT^n$-bundle}) over $X$ we understand a separable 
C*-algebra bundle $A(X)$ 
 together with a fibre-wise  action $\alpha:\TT^n\to\Aut(A(X))$ such that 
 $$A(X)\rtimes_{\alpha}\TT^n\cong C_0(X,\K)$$
 as C*-algebra bundles over $X$.
 \end{defn}

We should note that by a C*-algebra bundle over $X$ we simply understand 
a $C_0(X)$-algebra, i.e., a C*-algebra $A$ which is equipped with a 
$C_0(X)$-bimodule structure given by a non-degenerate $*$-homomorphism
$\Phi_A:C_0(X)\to ZM(A)$, where $ZM(A)$ denotes the centre of the 
multiplier algebra $M(A)$ of $A$. The fibre $A_x$ of 
$A(X)$ over $x\in X$ is then defined as the quotient of $A(X)$
by the ideal $I_x:=\Phi\big(C_0(X\setminus\{x\})\big)A(X)$.
We call an action $\alpha:G\to \Aut(A(X))$   {\em fibre-wise}
if it is $C_0(X)$-linear in the sense that 
$$\alpha_s(\Phi_A(f)a)=\Phi_A(f)(\alpha_s(a))\quad\text{for all $f\in C_0(X)$, $a\in A$}.$$
This implies that $\alpha$ induces actions $\alpha^x$ on the fibres $A_x$ for all $x\in X$
and the crossed product $A(X)\rtimes_{\alpha}G$ is again a C*-algebra 
bundle (i.e. $C_0(X)$-algebra) with structure map $\Phi_{A\rtimes G}:C_0(X)\to ZM(A\rtimes G)$
given by the composition of $\Phi_A$ with the canonical inclusion $M(A)\subseteq M(A\rtimes G)$,
and with fibres $A_x\rtimes_{\alpha^x}G$ (at least if we consider the full crossed products).

If $A(X)$ is a NCP $\TT^n$-bundle,  then the crossed product
$A(X)\rtimes_{\alpha}\TT^n\cong C_0(X,\K)$ comes equipped with the dual action of 
$\ZZ^n\cong \widehat{\TT^n}$, and then the Takesaki-Takai duality theorem tells us
that 
$$A(X)\sim_M C_0(X,\K)\rtimes_{\widehat{\alpha}}\ZZ^n$$
as C*-algebra bundles over $X$, where $\sim_M$ denotes $X\times\TT^n$-equivariant 
Morita equivalence. The corresponding action of $\ZZ^n$ on $C_0(X,\K)$ is also fibre-wise.
Conversely, if we start with any fibre-wise action $\beta:\ZZ^n\to\Aut(C_0(X,\K))$,
the Takesaki-Takai duality theorem implies that 
$$A(X)= C_0(X,\K)\rtimes_\beta  \ZZ^n$$
is a NCP $\TT^n$-bundle.  Thus, up to 
a suitable notion of Morita equivalence, the NCP $\TT^n$-bundles
are precisely the crossed products $C_0(X,\K)\rtimes_{\beta}\ZZ^n$ 
for some fibre-wise action $\beta$ of $\ZZ^n$ on $C_0(X,\K)$ and equipped with 
the dual $\TT^n$-action.  

Using this translation, the results of \cite{EW1, EW2}
provide a complete classification of NCP $\TT^n$-bundles
up to $\TT^n$-equivariant Morita equivalence in terms of pairs
$([q:Y\to X], f)$, where $[q:Y\to X]$ denotes the isomorphism class 
of a commutative principal $\TT^n$-bundle $q:Y\to X$ over $X$ and 
$f:X\to \TT^{\frac{n(n-1)}{2}}$ is a certain ``classifying'' map of the bundle (see \S \ref{sec-torus} below for more details).

In this paper we are mainly interested in the topological structure of the underlying 
non-commutative fibration $A(X)$ after forgetting the $\TT^n$-action.
Note that these bundles are in general quite irregular. 
To see this let us briefly discuss the Heisenberg bundle, which is probably 
the most prominent example of a NCP torus bundle:
Let $C^*(H)$ denote the C*-group algebra of the discrete Heisenberg group
$$H=\left\{\left(\begin{smallmatrix} 1 & n & k\\ 0& 1 & m\\ 0&0&1\end{smallmatrix}\right): 
n,m,k\in \ZZ\right\}.$$
The centre of this algebra is equal to $C^*(Z)$ with 
$Z=\left\{\left(\begin{smallmatrix} 1 & 0 & k\\ 0& 1 & 0\\ 0&0&1\end{smallmatrix}\right): 
k\in \ZZ\right\}\cong \ZZ$
the centre of $H$. Hence we have $Z(C^*(H))\cong C^*(\ZZ)\cong C(\TT)$ 
which implements a canonical C*-algebra bundle structure on $C^*(H)$ 
with base  $\TT$. It is well documented in the literature (e.g. see \cite{AP}) that the fibre
of this bundle at $z\in \TT$ is isomorphic to the non-commutative 2-torus $A_{\theta}$
if $z=e^{2\pi i\theta}$. Moreover, there is an obvious fibre-wise $\TT^2$-action on $C^*(H)$,
namely the dual action of $\widehat{H/Z}\cong \TT^2$, and one can check that
$$C^*(H)\rtimes\TT^2\cong C(\TT,\K)$$
as bundles over $\TT$.  Since the non-commutative $2$-tori $A_{\theta}$ differ substantially 
for different values of $\theta$ (e.g. they are simple for irrational $\theta$ and non-simple 
for rational $\theta$), the Heisenberg bundle is quite irregular in any classical sense.

But we shall see in  this paper  that, nevertheless, 
all NCP $\TT^n$-bundles are locally trivial in a $K$-theoretical sense.
This shows up if we change from the category of C*-algebra bundles over $X$ 
with bundle preserving (i.e., $C_0(X)$-linear)
$*$-homomorphisms to the category $\RKK_X$ of C*-algebra bundles over $X$ 
with morphisms given by the elements of Kasparov's group $\RKK(X; A(X), B(X))$.
Isomorphic  bundles $A(X)$ and $B(X)$ in this category are precisely the $\RKK$-equivalent
bundles.
The first observation we can make is the following

\begin{thm}\label{thm-local}
Any NCP $\TT^n$-bundle $A(X)$ is locally $\RKK$-trivial.
This means that for every $x\in X$ there is a neighbourhood $U_x$ of $x$ such that
the restriction $A(U_x)$ of $A(X)$ to $U_x$ is $\RKK$-equivalent 
to the trivial bundle $C_0(U_x, A_x)$.
\end{thm}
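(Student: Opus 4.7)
The plan is to combine the Echterhoff--Williams classification recalled in the introduction with a homotopy-invariance argument for $\RKK$ over the base. Fix $x\in X$. By the classification \cite{EW1, EW2}, $A(X)$ is $\TT^n$-equivariantly Morita equivalent to a model bundle $A'(X)$ built from a pair $([q:Y\to X],\,f:X\to \TT^{n(n-1)/2})$ consisting of a classical principal $\TT^n$-bundle $q$ and a ``classifying'' map $f$. Since $C_0(X)$-linear Morita equivalence induces an $\RKK_X$-equivalence, it suffices to prove the theorem for $A'(X)$ after forgetting the $\TT^n$-action.

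Next I would choose a neighbourhood $U=U_x$ of $x$ that is contractible and over which $q$ trivialises as $U\times\TT^n$; such $U$ exist for any reasonable locally compact $X$. The restricted bundle $A'(U)$ then corresponds, via the classification, to the data $([U\times\TT^n],\,f|_U)$. Contractibility of $U$ implies $f|_U$ is homotopic to the constant map $y\mapsto f(x)$, and the latter data produces the trivial bundle $C_0(U,A_x)$, where $A_x$ is the fibre of $A(X)$ at $x$. The theorem therefore reduces to showing that homotopic classifying maps yield $\RKK_U$-equivalent bundles.

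To close this gap, given a homotopy $F:U\times[0,1]\to \TT^{n(n-1)/2}$ from the constant map $f(x)$ to $f|_U$, I would apply the classification over $U\times[0,1]$ (with the trivial principal bundle and classifying map $F$) to obtain a $C_0(U\times[0,1])$-algebra $B$ whose restrictions to the slices $U\times\{0\}$ and $U\times\{1\}$ are $C_0(U,A_x)$ and $A'(U)$ respectively. Viewing $B$ as a $C_0(U)$-algebra via the projection $U\times[0,1]\to U$, the two evaluation maps $\ev_t:B\to B_t$ ($t=0,1$) have kernels $C_0(U\times(0,1])\cdot B$ and $C_0(U\times[0,1))\cdot B$. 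Since the half-open intervals are contractible, these ideals should be $\RKK_U$-acyclic, so the evaluations are $\RKK_U$-equivalences, and we conclude $A'(U)\sim_{\RKK_U} B \sim_{\RKK_U} C_0(U,A_x)$.

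The main obstacle is making the last step precise in the $\RKK_U$-setting: fibrewise $\KK$-triviality of an ideal does not automatically upgrade to $\RKK_U$-triviality for arbitrary $C_0(U)$-algebras, so one has to exploit the specific structure of the bundles coming from the classification, presumably by producing an explicit contracting $\RKK_U$-element built from multiplication by bump functions supported on $[0,1]$. An alternative route would be to bypass the homotopy construction entirely and argue directly via the Takai-duality description $A(X)\sim_M C_0(X,\K)\rtimes_\beta\ZZ^n$, locally trivialising the fibre-wise $\ZZ^n$-action up to exterior equivalence over a contractible $U$; but verifying the exterior-equivalence step appears to require essentially the same input, so the homotopy approach seems most economical.
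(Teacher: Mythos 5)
Your reduction is exactly the one the paper uses in Corollary \ref{cor-localRKK-triv}: pass to the classified model $Y*f^*(C^*(H_n))$, shrink $U$ so that $Y$ trivialises and $f|_U$ is null-homotopic, and then invoke homotopy invariance of the bundle in the $\RKK_U$-category. But the homotopy invariance itself (Corollary \ref{cor-homotop}/\ref{cor-homotopic}, resting on Proposition \ref{prop-convex}) is the entire technical content of the theorem, and your proposal does not prove it. The justification you offer --- that the kernels $C_0(U\times(0,1])\cdot B$ and $C_0(U\times[0,1))\cdot B$ ``should be $\RKK_U$-acyclic because the half-open intervals are contractible'' --- is precisely the kind of homotopy-invariance statement that fails for general $C_0$-algebras: these bundles are not locally trivial, evaluation at an endpoint of $[0,1]$ is not a $\KK$-equivalence for an arbitrary $C([0,1])$-algebra, and no contracting element built from bump functions exists in general (multiplication operators do not produce a homotopy through the varying fibres). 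You correctly flag this as the main obstacle, but the fix you sketch would not close it.

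What actually closes it in the paper is a change of model that has nothing to do with cut-off functions. One writes $A(\Delta\times X)\sim_M C_0(\Delta\times X,\K)\rtimes_\alpha\ZZ^n$, uses Green's imprimitivity theorem to replace this by $\bigl(\Ind_{\ZZ^n}^{\RR^n}C_0(\Delta\times X,\K)\bigr)\rtimes\RR^n$, and then applies the fibrewise ($\RKK$-valued) Connes--Thom isomorphism of Theorem \ref{thm-Thom} --- built from Kasparov's Dirac and dual-Dirac elements --- to trade the $\RR^n$-crossed product for the induced algebra itself, up to a dimension shift. The result (Lemma \ref{lem-nctorusfibration}) is that $A(\Delta\times X)$ is $\RKK(\Delta\times X;\cdot,\cdot)$-equivalent to a \emph{continuous-trace} algebra over $\Delta\times X\times\TT^n$; for such algebras classical Dixmier--Douady theory applies, and contractibility of $\Delta=[0,1]$ forces the class to be pulled back from $X\times\TT^n$, so the bundle is constant in the $\Delta$-direction and evaluation is an $\RKK(X;\cdot,\cdot)$-equivalence. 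Without this (or an equivalent) mechanism for converting the noncommutative fibres into commutative data, your argument has a genuine gap at its central step.
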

 
 The proof is given in Corollary \ref{cor-localRKK-triv} below.
 Having this result, it is natural to ask the following questions:
 \\
 \\
 {\bf Question 1:} Suppose that $A_0(X)$ and $A_1(X)$ are two non-commutative $\TT^n$-bundles 
 over $X$. Under what conditions is $A_1(X)$ $\RKK$-equivalent to $A_2(X)$?
 \\
 
 Actually,  in this paper 
 we will only give a partial answer to the above question. 
 But we shall  give a complete answer, at least for (locally) path connected spaces $X$, to 
 \\
 \\
 {\bf Question 2:} Which non-commutative principal $\TT^n$-bundles are $\RKK$-equivalent 
 to a ``commutative'' $\TT^n$-bundle?
\\

A basic tool for our  study of $\RKK$-equivalence of non-commutative  torus bundles will
be the $K$-theory group bundle $\mathcal K_*:=\{K_*(A_x): x\in X\}$ associated
to $A(X)$ and/or a certain associated action of the fundamental group $\pi_1(X)$ on the fibres 
$K_*(A_x)$  if $X$ is path-connected.  
The answer to Question 2 is then given by

\begin{thm}\label{thm-trivil}
Let $A(X)$ be a NCP $\TT^n$-bundle with $X$ path connected. Then the following are equivalent:
\begin{enumerate}
\item $A(X)$ is $\RKK$-equivalent to a commutative principal $\TT^n$-bundle.
\item The $K$-theory bundle $\mathcal K_*(A(X))$ is trivial.
\item The action of  $\pi_1(X)$ on the fibres $K_*(A_x)$ of the $K$-theory bundle is trivial.
\end {enumerate}
\end{thm}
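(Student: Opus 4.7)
The plan is to prove the cycle $(i)\Rightarrow(ii)\Rightarrow(iii)\Rightarrow(i)$. The equivalence $(ii)\Leftrightarrow(iii)$ is the easier piece. The forward direction is immediate, since a trivializable bundle of abelian groups carries a trivial monodromy action. For the converse, Theorem \ref{thm-local} provides local $\RKK$-trivializations of $A(X)$, and since $\RKK$-equivalence induces a natural isomorphism on the $K$-theory of fibres, $\mathcal K_*(A(X))$ is a locally trivial bundle of abelian groups on $X$. Over a path-connected, reasonably nice base (e.g.\ locally path-connected and semilocally simply connected), such bundles are classified up to isomorphism by their monodromy representation of $\pi_1(X)$, and triviality of that action therefore produces a global trivialization.

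For $(i)\Rightarrow(ii)$, an $\RKK$-equivalence induces an isomorphism of $K$-theory bundles, so it suffices to show that $\mathcal K_*(C_0(Y))$ is trivial for any commutative principal $\TT^n$-bundle $q: Y \to X$. The monodromy is the action of $\pi_1(X)$ on $K^*(\TT^n) \cong \Lambda^*\ZZ^n$ induced by $q$; since the structure group $\TT^n$ is connected and acts on the fibre by translations that are homotopic to the identity, this action is trivial.

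The essential step is $(iii)\Rightarrow(i)$. Here I would invoke the classification of \cite{EW1,EW2} recalled in \S\ref{sec-torus}: up to $\TT^n$-equivariant Morita equivalence, $A(X)$ corresponds to a pair $([q: Y \to X], f)$ with $q$ an ordinary principal $\TT^n$-bundle and $f: X \to \TT^{n(n-1)/2}$ a classifying map encoding the fibrewise noncommutative deformation parameters. The argument then proceeds in three stages: first, identify the monodromy of $\mathcal K_*(A(X))$ on a fibre $\Lambda^*\ZZ^n = K_*(A_{\theta_{x_0}})$ with the induced homomorphism $f_*: \pi_1(X) \to \pi_1(\TT^{n(n-1)/2}) \cong \ZZ^{n(n-1)/2}$, by tracking how a loop of noncommutative $n$-tori acts on the standard generators of $K_*$; second, since $\TT^{n(n-1)/2}$ is a $K(\ZZ^{n(n-1)/2}, 1)$, conclude from $f_* = 0$ that $f$ is null-homotopic (assuming $X$ has the homotopy type of a CW complex); third, convert this null-homotopy into an $\RKK$-equivalence between the NCP bundle associated to $(Y, f)$ and the one associated to $(Y, \mathrm{const})$, the latter being precisely $C_0(Y)$.

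The main obstacle is the third stage. Homotopic classifying maps need not yield $\TT^n$-equivariantly Morita equivalent NCP bundles, so one must genuinely exploit that $\RKK$-equivalence is the coarser notion. I would attack this by realizing the null-homotopy of $f$ as a continuous family of NCP bundles over $X \times [0,1]$ and applying a homotopy-invariance argument for $\RKK$-equivalence, evaluating at the endpoints and using that every noncommutative $n$-torus is $KK$-equivalent to $C(\TT^n)$ (e.g.\ via iterated Connes-Thom) in order to trivialize the deformation direction.
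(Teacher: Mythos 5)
Your outline reproduces the paper's own proof: the equivalence of (ii) and (iii) is essentially built into the paper's definition of a group bundle, (i)$\Rightarrow$(ii) is Remark \ref{rem-KK} together with the connectedness-of-$\TT^n$ argument underlying Lemmas \ref{lem-pullback} and \ref{lem-group-twisted-bundle}, and (iii)$\Rightarrow$(i) follows exactly the paper's route --- reduce via Lemma \ref{lem-group-twisted-bundle} and the classification data $([q\colon Y\to X],f)$ to showing that $f$ is null-homotopic, then apply homotopy invariance of $\RKK$ (Proposition \ref{prop-convex}, proved by precisely the Connes--Thom/continuous-trace reduction you propose) to the family over $X\times[0,1]$ interpolating between $A(X)$ and $C_0(Y)$. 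Be aware, though, that the step you compress into ``tracking how a loop of noncommutative tori acts on the standard generators'' is where almost all of the work lies, rather than in your stage three: one must prove that the monodromy representation of $\pi_1\big(\TT^{\frac{n(n-1)}{2}}\big)$ on $K_*(C(\TT^n))\cong\Lambda_*(\ZZ^n)$ coming from the universal bundle $C^*(H_n)$ is \emph{injective} (otherwise triviality of the $\pi_1(X)$-action would not force $f_*=0$), and the paper devotes \S\ref{sec-dim2}--\S\ref{sec-general} to this, computing the action via the Rieffel module over $\nu^*C^*(H_2)$ and Connes' Chern character pairing and then reducing the general case to $n=2$ along the basic circles $\TT_{ij}$.
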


In the case where $n=2$ we can determine the $\RKK$-equivalence classes up to a twisting 
by commutative principal bundles and we shall obtain a slightly weaker result in case $n=3$
(see Theorem \ref{thm-RKK} below). But this still does not give a complete answer to 
Question 1. 


Another obvious obstruction for $\RKK$-equivalence is
given by the $K$-theory group of the ``total space'' $A(X)$:
 If $K_*(A_0(X))\neq K_*(A_1(X))$, then 
clearly $A_0(X)$ and $A_1(X)$ are not $\RKK$-equivalent, since $\RKK$-equivalence implies
$\KK$-equivalence. Using this obstruction we can easily see that the 
Hopf-fibration $p:S^3\to S^2$ and the trivial bundle $S^2\times \TT$ are 
principal $\TT$-bundles which are not $\RKK$-equivalent (i.e., $C(S^3)$ is not 
$\RKK(S^2;\cdot,\cdot)$-equivalent to $C(S^2\times\TT)$), since $K_0(C(S^3))=\ZZ$
while $K_0(C(S^2\times \TT))=\ZZ^2$, although, $S^2$ being simply
connected, their $K$-theory group
bundles are trivial and thus isomorphic. However, in general it is not so easy to 
compute the $K$-theory groups of the algebras $A(X)$! Since, as mentioned above, our bundles 
behave like Serre fibrations in commutative topology, one should 
expect the existence of a generalized Leray Serre spectral sequence which relates
the $K$-theory group  of the total space $A(X)$ to the $K$-theory of the base and the fibres.
Indeed, such spectral sequence does exist with $E_2$-term given by
the cohomology of $X$  with local coefficients in the $K$-theory group bundle
$\mathcal K_*(A(X))$, but we postpone the details of this 
to the forthcoming paper \cite{ENO}.

This paper is organized as follows: after a preliminary section on C*-algebra bundles we
start in \S \ref{sec-torus} with the classification of NCP $\TT^n$-bundles based on  
\cite{EW1, EW2}. We then proceed in \S \ref{sec-loctriv} by showing that all NCP bundles
are locally $\RKK$-trivial before we introduce the $K$-theory group bundle and 
the action $\pi_1(X)$ on the fibres of such bundles in \S \ref{sec-bundle}. 
This action will be determined in detail for the case $n=2$ in \S \ref{sec-dim2}, and,
building on the two-dimensional case the $\pi_1(X)$-action will be described 
in  general case in \S \ref{sec-general}. In \S \ref{RKK} we will then give our main results 
on $\RKK$-equivalence of NCP bundles as explained above.

 In our final section, 
\S \ref{sec-T-dual} we give some application of our results to the study of 
$T$-duals of principal $\TT^n$-bundles $q:Y\to X$ with $H$-flux $\delta\in H^3(Y,\ZZ)$
as studied by Mathai and Rosenberg in \cite{MR1,MR2}. The corresponding 
stable continuous-trace algebras $CT(Y,\delta)$ can then regarded as C*-algebra bundles
over $X$ with fibres isomorphic to $C(\TT^n, \K)$ (it follows from the fact that 
$CT(Y,\delta)$ is assumed to carry an action of $\RR^n$ which restricts on the base $Y$
to the action inflated from  the given action of $\TT^n$ on $Y$, that the Dixmier-Douady class $\delta$
is trivial on the fibres $Y_x\cong \TT^n$). 
We can then study the $K$-theory group bundle
$\mathcal K_*(CT(Y,\delta))$ over $X$ with fibres $K_*(C(Y_x,\K))\cong K_*(C(\TT^n))$ for $x\in X$.
Following \cite{MR1,MR2} we say that $(Y,\delta)$ has a classical $T$-dual $(\widehat{Y},\widehat{\delta})$ if and only if one can find an action $\beta:\RR^n\to \Aut(CT(Y,\delta))$ as above, such that 
the crossed product $CT(Y,\delta)\rtimes_\beta\RR^n$ is again a continuous-trace algebra,
and hence isomorphic to some $CT(\widehat{Y}, \widehat{\delta})$ for some principal
$\TT^n$-bundle $\hat{q}:\widehat{Y}\to X$ and some $H$-flux 
$\widehat{\delta}\in H^3(\widehat{Y},\ZZ)$. Combining our results with \cite[Theorems 2.3 and 3.1]{MR2}
we show

\begin{thm} Suppose that $X$ is (locally) path connected.
Then the  pair $(Y,\delta)$ has a classical $T$-dual $(\widehat{Y},\widehat{\delta})$ 
if and only if the associated  $K$-theory group bundle
$\mathcal K_*(CT(Y,\delta))$ over $X$ is trivial.
\end{thm}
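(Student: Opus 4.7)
The plan is to identify $CT(Y,\delta)$ as a NCP $\TT^n$-bundle in the sense of Definition~\ref{defn-nctorus}, apply Theorem~\ref{thm-trivil}, and match the resulting $\RKK$-equivalence with a commutative bundle to the existence of a classical T-dual via \cite[Theorems 2.3 and 3.1]{MR2}.

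First I would check that $CT(Y,\delta)$ equipped with the $\TT^n$-action inflated from the free action on $Y$ is a NCP $\TT^n$-bundle over $X$: the action is fibre-wise over $X$, and since $\delta|_{Y_x}=0$ for every $x$, Green's theorem applied in each fibre yields $CT(Y,\delta)\rtimes\TT^n\cong C_0(X,\K)$ as $C_0(X)$-algebras. Theorem~\ref{thm-trivil} then gives that $\mathcal K_*(CT(Y,\delta))$ is trivial if and only if $CT(Y,\delta)$ is $\RKK$-equivalent (over $X$) to $C_0(\widehat Y,\K)$ for some principal $\TT^n$-bundle $\widehat Y\to X$.

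Second, I would translate this $\RKK$-equivalence into the language of classical T-duality. In the ``only if'' direction, a classical T-dual provides $CT(Y,\delta)\rtimes_\beta\RR^n\cong CT(\widehat Y,\widehat\delta)$; combining Takai duality with the Connes-Thom isomorphism applied compatibly with the $C_0(X)$-structure (hence preserving $\pi_1(X)$-monodromy) identifies $\mathcal K_*(CT(Y,\delta))$, up to a Bott-shift, with $\mathcal K_*(CT(\widehat Y,\widehat\delta))$. The latter has trivial $\pi_1(X)$-monodromy: $\widehat Y\to X$ is a principal $\TT^n$-bundle, so fibrewise translations are homotopic to the identity and act trivially on $K_*(\TT^n)$, while $\widehat\delta|_{\widehat Y_x}=0$ ensures that each fibre is $C(\TT^n,\K)$. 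In the ``if'' direction, Theorem~\ref{thm-trivil} produces a candidate $\widehat Y\to X$ together with an $\RKK$-equivalence $CT(Y,\delta)\sim C_0(\widehat Y,\K)$; \cite[Thm 3.1]{MR2} then identifies the obstruction to finding an $\RR^n$-action $\beta$ making $CT(Y,\delta)\rtimes_\beta\RR^n$ continuous-trace with a Mackey-type class controlled by the $\TT^n$-equivariant Morita data classified in \S\ref{sec-torus} by a pair $([\widehat Y\to X],f)$, and this class is killed by the triviality of $\mathcal K_*(CT(Y,\delta))$, yielding a classical T-dual with total space $\widehat Y$.

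The main obstacle lies in the ``if'' direction: one must carefully match our $K$-theoretic invariant with the Mackey obstruction of \cite{MR2} and use the classification of \S\ref{sec-torus} to construct $\beta$ with the required continuous-trace crossed product. The ``only if'' direction is essentially naturality of Connes-Thom and of the $\pi_1(X)$-monodromy with respect to the $C_0(X)$-structure.
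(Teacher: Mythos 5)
There is a genuine gap at the very first step of your plan. You assert that $CT(Y,\delta)$, ``equipped with the $\TT^n$-action inflated from the free action on $Y$,'' is a NCP $\TT^n$-bundle in the sense of Definition \ref{defn-nctorus}. But in general there is \emph{no} $\TT^n$-action on $CT(Y,\delta)$ covering the $\TT^n$-action on $Y$; under the hypothesis that $\delta$ vanishes on the fibres one only obtains a lift of the $\RR^n$-action, and the failure of this $\RR^n$-action to factor through $\TT^n$ is precisely what the Mackey obstruction map $f:X\to\TT^{\frac{n(n-1)}{2}}$ measures. If such a $\TT^n$-action existed, the restriction of $\beta$ to $\ZZ^n=\ker(\RR^n\to\TT^n)$ would be trivial, the Mackey map would be constant, and the existence of a classical $T$-dual would follow at once from \cite[Theorem 3.1]{MR2} --- so your step 1 essentially assumes a statement close to the conclusion, and it is false in the interesting cases. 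Consequently Theorem \ref{thm-trivil} cannot be applied to $CT(Y,\delta)$ itself: that theorem is about NCP $\TT^n$-bundles, and $CT(Y,\delta)$ is only a $K$-fibration with fibres $C(\TT^n,\K)$ (Lemma \ref{lem-triv}). The object that is (locally, and loop-wise) Morita equivalent to NCP $\TT^n$-bundles is the crossed product $CT(Y,\delta)\rtimes_\beta\RR^n$ (Lemma \ref{lem-NCPdual}, Proposition \ref{prop-local-T-dual}), and the bridge back to $CT(Y,\delta)$ is the strong Connes--Thom $\RKK$-equivalence of Theorem \ref{thm-Thom} (Corollary \ref{cor-RKKtriv}), not an application of Green's theorem to a nonexistent $\TT^n$-action.

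Your ``only if'' direction is closer to the mark, since there you do invoke Connes--Thom; but the ``if'' direction, which you yourself flag as the main obstacle, is not carried out: saying that the Mackey class ``is killed by the triviality of $\mathcal K_*(CT(Y,\delta))$'' is exactly the point that needs proof. The paper's argument supplies the missing link by restricting along a loop $\nu:\TT\to X$, using $H^2(\TT,\ZZ)=H^3(\TT,\ZZ)=0$ to trivialize $\nu^*Y$ and $\nu^*\delta$, identifying $\nu^*\bigl(CT(Y,\delta)\rtimes_\beta\RR^n\bigr)$ up to Morita equivalence over $\TT$ with $f_\nu^*C^*(H_n)$ where $f_\nu=f\circ\nu$, and then reading off the monodromy from equation (\ref{eq-action}); Proposition \ref{prop-inj} then shows that the $\pi_1(X)$-action is trivial if and only if $f$ is null-homotopic, which by \cite[Theorems 2.3 and 3.1]{MR2} is equivalent to the existence of a classical $T$-dual. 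Without this loop-wise identification (or some substitute for it), neither direction of your equivalence between ``trivial $K$-theory bundle'' and ``null-homotopic Mackey map'' is established.
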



\section{Preliminaries on C*-algebra bundles.}

In this section we want to set up some basic results on C*-algebra bundles 
which are used throughout this paper. As explained in the introduction, we 
use the term C*-algebra bundle as a more suggestive word for 
what is also known as a $C_0(X)$-algebra in the literature, and we already recalled
the definition of these objects in the introduction. 
Recall that for $x\in X$ the fibre $A_x$ of $A(X)$ at $x$ was defined as the 
quotient $A_x=A(X)/I_x$ with $I_x=C_0(X\setminus \{x\})A(X)$ (throughout the paper
we shall simply  write $g\cdot a$ for $\Phi(g)a$ if $g\in C_0(X)$ and $a\in A$).
If $a\in A(X)$, we put $a(x):=a+I_x\in A_x$. In this way we may view the elements 
$a\in A(X)$ as sections of the bundle $\{A_x: x\in X\}$. The function $x\mapsto \|a(x)\|$
is then always upper semi-continuous and vanishes at infinity on $X$. Moreover,
we have 
$$\|a\|=\sup_{x\in X}\|a(x)\|\quad\text{for all $a\in A(X)$}.$$
This and many more details can be found in \cite[Appendix C]{Will}.

If $A(X)$ is a C*-algebra bundle, $Y$ is a locally compact 
space and $f:Y\to X$ is a continuous map, then the pull-back $f^*A$ of
$A(X)$ along $f$ is defined as the balanced tensor product
$$f^*A:=C_0(Y)\otimes_{C_0(X)}A,$$
where the $C_0(X)$-structure on $C_0(Y)$
is given via $g\mapsto g\circ f\in C_b(Y)=M(C_0(Y))$.
The $*$-homomorphism 
$$C_0(Y)\to C_0(Y)\otimes_{C_0(X)}A; g\mapsto g\otimes 1$$
provides $f^*A$ with a canonical structure as a C*-algebra bundle over $Y$
with fibre $f^*A_y=A_{f(y)}$ for all $y\in Y$. We shall therefore use the notation
$f^*A(Y)$ to indicate this structure.

In particular, if $Z\subseteq X$ is a locally compact subset of $X$ 
the pull-back $A(Z):=i_Z^*A$ of $A$ along the inclusion
map $i_Z:Z\to X$ becomes a C*-algebra bundle over $Z$ which we call the 
{\em restriction} of $A(X)$ to $Z$. The following lemma is well known. 
A proof can be found in \cite{EW2}.

\begin{lem}\label{lem-pull1} Suppose that $A(X)$ is a C*-algebra bundle 
and let $Z$ be a closed subset
of $X$ with complement $U:=X\smallsetminus Z$. Then there is a canonical short exact sequence
of $C^*$-algebras
$$0\to A(U)\to A(X)\to A(Z)\to 0.$$
The quotient map $A\to A(Z)$ is given by restriction  of sections in $A(X)$ 
 to $Z$ and the inclusion $A(U) \subseteq A(X)$ is given via the $*$-homomorphism
$$C_0(U)\otimes_{C_0(X)}A(X)\to A(X); g\otimes a\mapsto g\cdot a$$
(which makes sense since $C_0(U)\subseteq C_0(X)$).
\end{lem}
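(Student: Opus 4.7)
The plan is to realize $A(U)$ as a closed ideal of $A(X)$ and $A(Z)$ as the corresponding quotient, so that the claimed short exact sequence appears tautologically. Extension by zero makes $C_0(U)$ a closed ideal of $C_0(X)$ with quotient $C_0(X)/C_0(U)\cong C_0(Z)$ via restriction of functions to $Z$; the strategy is then to base-change the $C^*$-algebra bundle $A(X)$ along these two maps.

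For the ideal part, I would study the $*$-homomorphism
$$\Phi\colon C_0(U)\otimes_{C_0(X)}A\to A,\qquad g\otimes a\mapsto g\cdot a.$$
The $C_0(X)$-balancing makes $\Phi$ well defined, and its image is the ideal $J:=\overline{C_0(U)\cdot A}\subseteq A$ (which is in fact already closed by Cohen--Hewitt factorization, using an approximate unit of $C_0(U)$). The crucial point is injectivity of $\Phi$. For $g\in C_0(U)$, $a\in A$ and $e\in C_0(U)$, the balancing gives $e\cdot(g\otimes a)=(eg)\otimes a=e\otimes(g\cdot a)$, and by linearity and continuity $e\cdot t=e\otimes \Phi(t)$ for every $t$ in the tensor product. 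Taking an approximate unit $(e_\lambda)$ of $C_0(U)$, one checks first on elementary tensors and then extends that $e_\lambda\cdot t\to t$. Consequently $\Phi(t)=0$ forces $t=\lim_\lambda e_\lambda\otimes \Phi(t)=0$, and $\Phi$ identifies $A(U)$ with $J$ via exactly the formula in the statement.

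For the quotient part, non-degeneracy of $\Phi_A$ gives the standard identification $C_0(X)\otimes_{C_0(X)}A\cong A$, and the quotient $C_0(X)\twoheadrightarrow C_0(Z)$ then induces a surjective $*$-homomorphism $\pi\colon A\twoheadrightarrow C_0(Z)\otimes_{C_0(X)}A=A(Z)$ which on a section $a\in A(X)$ is precisely restriction to $Z$. Clearly $J\subseteq\ker\pi$, and for the reverse inclusion I would pass to fibres: $\pi$ is $C_0(X)$-linear, induces the identity on the fibres $A_z$ for $z\in Z$ and is zero on the fibres over $U$, so $\ker\pi$ consists of those $a\in A$ with $a(z)=0$ for every $z\in Z$. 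The standard fibre-wise description of closed ideals in $C_0(X)$-algebras (see \cite[Appendix~C]{Will}) identifies this with $J$. Assembling the two steps yields the short exact sequence with the asserted maps. The main obstacles are the injectivity of $\Phi$ and the computation of $\ker\pi$; both rest on a careful interplay between approximate units in $C_0(U)$, the non-degeneracy of the $C_0(X)$-action, and the upper semi-continuous bundle structure of $A(X)$.
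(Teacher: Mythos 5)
Your proof is correct and is the standard argument: the paper itself does not prove this lemma but refers to \cite{EW2}, and the identifications you make (the image of $C_0(U)\otimes_{C_0(X)}A$ with the ideal $\overline{C_0(U)\cdot A}=\{a\in A:\ a(z)=0\ \text{for all}\ z\in Z\}$ via approximate units and upper semicontinuity of $x\mapsto\|a(x)\|$, and $A(Z)$ with the corresponding quotient) are exactly the ones used there. No gaps.
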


\begin{remark}\label{rem-loctriv}
It might be useful to remark the following relation between taking pull-backs and
restricting to subspaces: assume that $A$ is a $C_0(X)$-algebra and $f:Y\to X$ is a 
continuous map. Then the tensor product $C_0(Y)\otimes A(X)$ becomes 
a C*-algebra bundle over $Y\times X$
via the the canonical imbedding of $C_0(Y\times X)\cong C_0(Y)\otimes C_0(X)$ into 
$ZM(C_0(Y)\otimes A)$. The pull-back $f^*A(Y)$ then coincides with the restriction 
of $C_0(Y)\otimes A(X)$ to the graph $Y\cong \{(y,f(y)): y\in Y\}\subseteq Y\times X$.
\end{remark}

Recall that a continuous map $f:Y\to X$ between two locally compact spaces is called {\em proper}
if the inverse images of compact sets in $X$ are compact. If $f:Y\to X$ is proper,
then $f$ induces a non-degenerate $*$-homomorphism 
$\phi_f:C_0(X)\to C_0(Y); g\mapsto g\circ f$. We shall now extend this observation to 
C*-algebra bundles.

\begin{lem}\label{lem-pull2}
Assume that $A(X)$ is a C*-algebra bundle and that $f:Y\to X$ is a
proper map.
Then,
\begin{enumerate}
\item for any $a$ in $A$, the element $1\otimes a$ of  $M(f^*A(Y))$
  actually 
  lies in $f^*A(Y)=C_0(Y)\otimes_{C_0(X)}A(X)$;
\item  there exists a well defined $*$-homomorphism 
$\Phi_f: A(X)\to f^*A(Y)$;
given by $\Phi_f(a)=1\otimes a$. 
\item If $g:Z\to Y$ is another proper map, then
$\Phi_{f\circ g}= \Phi_g\circ \Phi_f$. 
\item In particular,
if $f$ is a homeomorphism, then $\Phi_f$ is an isomorphism.
\end{enumerate}
\end{lem}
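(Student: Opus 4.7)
The plan is to establish (i) first via Cohen's factorization theorem and then deduce (ii)--(iv) essentially as formal consequences. The key point is that although $1\otimes a$ always makes sense as an element of the multiplier algebra $M(f^*A(Y))$, the properness of $f$ is precisely what pulls it back inside the algebra itself.

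First, given $a\in A(X)$, I would use Cohen factorization (applied to the non-degenerate $C_0(X)$-module structure on $A$) to write $a = h\cdot a'$ for some $h\in C_0(X)$ and $a'\in A$. In the balanced tensor product $C_0(Y)\otimes_{C_0(X)}A$, the balancing relation together with the definition of the $C_0(X)$-action on $C_0(Y)$ gives
\[
1\otimes a \;=\; 1\otimes (h\cdot a') \;=\; (h\circ f)\otimes a'.
\]
Since $f$ is proper, $h\circ f\in C_0(Y)$, so the right-hand side lies honestly in $C_0(Y)\otimes_{C_0(X)}A = f^*A(Y)$. This proves (i). Independence of the choice of factorization is automatic because both sides agree as multipliers.

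For (ii), once (i) is in place, the map $\Phi_f(a)=1\otimes a$ is manifestly linear, multiplicative, and $*$-preserving, because these properties are inherited from the inclusion $A\hookrightarrow M(f^*A)$, $a\mapsto 1\otimes a$, which is a standard feature of $C_0(X)$-balanced tensor products with non-degenerate module structures. The main point that needs care is well-definedness, which is exactly what (i) secured.

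For (iii), given proper maps $Z\xrightarrow{g} Y\xrightarrow{f} X$, the composite $f\circ g$ is again proper, so $\Phi_{f\circ g}$ is defined. The identification $g^*(f^*A)\cong (f\circ g)^*A$ as $C_0(Z)$-algebras (via the canonical isomorphism $C_0(Z)\otimes_{C_0(Y)}(C_0(Y)\otimes_{C_0(X)}A)\cong C_0(Z)\otimes_{C_0(X)}A$ sending $1\otimes(1\otimes a)$ to $1\otimes a$) immediately yields $\Phi_{f\circ g} = \Phi_g\circ \Phi_f$. Finally for (iv), a homeomorphism $f$ and its inverse $f^{-1}$ are both proper, and (iii) applied to $f\circ f^{-1}=\operatorname{id}_X$ and $f^{-1}\circ f=\operatorname{id}_Y$ shows that $\Phi_f$ and $\Phi_{f^{-1}}$ are mutually inverse, once one notes $\Phi_{\operatorname{id}}=\operatorname{id}$. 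The main obstacle is really just (i); everything else is formal bookkeeping about balanced tensor products.
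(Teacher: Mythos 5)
Your proof is correct, and for part (i) it takes a genuinely different route from the paper. The paper keeps the argument elementary: it observes that $1\otimes a$ is the section $\tilde a(y)=a(f(y))$ of the bundle $f^*A(Y)$, notes that properness of $f$ forces the norm function $y\mapsto\|a(f(y))\|$ to vanish at infinity, and then approximates $\tilde a$ in norm by the elements $\chi_i\otimes a$ for an exhausting net of compactly supported cut-off functions $\chi_i\in C_c(Y)$; closedness of $f^*A(Y)$ in $M(f^*A(Y))$ finishes the argument. You instead invoke Cohen factorization for the non-degenerate $C_0(X)$-module $A$ to write $a=h\cdot a'$ and obtain the exact identity $1\otimes a=(h\circ f)\otimes a'$ of multipliers, with $h\circ f\in C_0(Y)$ by properness. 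Your version buys an exact algebraic representative rather than a limit, at the cost of citing the Cohen--Hewitt theorem; the paper's version is more hands-on and makes the geometric picture of $1\otimes a$ as a pulled-back section explicit. One small point worth spelling out in your write-up: the identity $1\otimes(h\cdot a')=(h\circ f)\otimes a'$ is not literally an instance of the balancing relation (since $1\notin C_0(Y)$ when $Y$ is non-compact), so it should be verified by letting both sides act on elementary tensors $\phi\otimes b$, where the balancing relation $(h\circ f)\phi\otimes a'b=\phi\otimes h\cdot(a'b)$ does apply; you gesture at this with ``both sides agree as multipliers,'' which is the right idea. Parts (ii)--(iv) are handled in both proofs by the same formal bookkeeping via the canonical isomorphism $(f\circ g)^*A\cong g^*(f^*A)$.
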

\begin{proof} Notice that for any continuous map $f:Y\to X$
the map $\Phi_f:A(X)\to M(f^*A(Y))$ given by $\Phi_f(a)=1\otimes a$ is a well defined 
$*$-homomorphism.
Thus we only have to show that, in case where $f$ is proper, the image lies in $f^*A(Y)$.
Note that via the above map each $a\in A(X)$ is mapped to the section $\tilde{a}$ 
of the bundle $f^*A(Y)$ with $\tilde{a}(y)=a(f(y))$. If $f$ is proper, it follows that 
the norm function $y\mapsto \|\tilde{a}(y)\|=\|a(f(y))\|$ vanishes at $\infty$.
Now choose a net of compacts $\{C_i\}_{\i\in I}$ 
in $Y$ and continuous compactly supported functions $\chi_i:Y\to[0,1]$ with $\chi_i|_{C_i}\equiv 1$.
Then the image of $\chi_i\otimes a$ in $f^*A(Y)$ will converge to $\tilde{a}$ in norm, which 
clearly implies that $\tilde{a}\in f^*A(Y)$. All other conditions now follow from the fact that
$(g\circ f)^*A(Z)$ is canonically isomorphic to $g^*(f^*A)(Z)$.
\end{proof}

\section{Classification of non-commutative torus bundles}\label{sec-torus}

As noted already in the introduction, the NCP torus bundles $A(X)$
can be realized up to $\TT^n$-equivariant Morita equivalence over $X$ 
as crossed products $C_0(X,\K)\rtimes \ZZ^n$,
equipped with the dual $\TT^n$-action, 
where $\ZZ^n$ acts fibre-wise on the trivial bundle $C_0(X,\K)$.
For the necessary background on  equivariant Morita equivalence with respect to 
dual actions of abelian groups we refer to \cite{E-memoir, EKQR}, and for 
the necessary background on  crossed products by fibre-wise actions we refer
to \cite{EW1, EW2} or \cite{KW}. We should then make the translation 
of NCP torus bundles to $\ZZ^n$-actions more precise:

\begin{prop}\label{prop-dualbundle}
Every NCP $\TT^n$-bundle $A(X)$ is $\TT^n$-equivariantly
Morita equivalent over $X$ to some crossed product $C_0(X,\K)\rtimes \ZZ^n$, where 
$\ZZ^n$ acts  fibre-wise on $C_0(X,\K)$, and where the $\TT^n$-action
on $C_0(X,\K)\rtimes \ZZ^n$ is the dual action. 

Moreover, two bundles $A(X)=C_0(X,\K)\rtimes_{\alpha}\ZZ^n$ and $B(X)=C_0(X,\K)\rtimes_{\beta}\ZZ^n$ are $\TT^n$-equivariantly Morita equivalent over $X$ if and only if 
the two actions $\alpha,\beta: \ZZ^n\to C_0(X,\K)$ are Morita equivalent over $X$.
\end{prop}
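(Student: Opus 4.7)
The plan is to deduce the first assertion from the fibre-wise Takesaki--Takai duality theorem, and the second from the descent of equivariant Morita equivalences through crossed products, as developed in \cite{E-memoir, EKQR}.

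For the first part, by definition there is a $C_0(X)$-linear isomorphism $A(X)\rtimes_\alpha\TT^n\cong C_0(X,\K)$; transporting the dual action of $\ZZ^n=\widehat{\TT^n}$ through this isomorphism produces a fibre-wise action $\beta$ of $\ZZ^n$ on $C_0(X,\K)$. Takesaki--Takai then yields a $\TT^n$-equivariant, $C_0(X)$-linear isomorphism
$$A(X)\otimes\K(L^2(\TT^n))\cong\big(A(X)\rtimes_\alpha\TT^n\big)\rtimes_{\widehat\alpha}\ZZ^n\cong C_0(X,\K)\rtimes_\beta\ZZ^n,$$
with the left-hand side carrying the Takai double-dual action and the right-hand side the canonical dual $\TT^n$-action. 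Since stabilisation by $\K(L^2(\TT^n))$ implements a $\TT^n$-equivariant Morita equivalence over $X$ between $A(X)$ and $A(X)\otimes\K(L^2(\TT^n))$, the first half follows.

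For the second part I would treat the two implications symmetrically. If $E$ is a $\ZZ^n$-equivariant $C_0(X)$-linear Morita equivalence bimodule implementing $\alpha\sim_M\beta$, then the standard crossed-product construction $E\rtimes\ZZ^n$ produces a $C_0(X)$-linear Morita equivalence between $C_0(X,\K)\rtimes_\alpha\ZZ^n$ and $C_0(X,\K)\rtimes_\beta\ZZ^n$ which is $\TT^n$-equivariant for the respective dual actions; this is the fibre-wise version of the equivariant equivalence theorem of \cite[Chapter 4]{E-memoir}. Conversely, given a $\TT^n$-equivariant Morita equivalence $F$ over $X$ between the two crossed products, I would apply the same construction once more, this time with respect to the dual $\TT^n$-action. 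By the Takai duality theorem for equivalence bimodules this yields a $\ZZ^n$-equivariant Morita equivalence over $X$ between $C_0(X,\K)\otimes\K(L^2(\TT^n))$ equipped with the $\alpha$-double-dual action and the same algebra with the $\beta$-double-dual action; since these double-dual actions differ from $\alpha$, $\beta$ only by an inner stabilising factor, the inner piece can be absorbed to give $\alpha\sim_M\beta$ over $X$.

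The main obstacle is bookkeeping rather than ideas: one must check at every step (crossed products of equivalence bimodules, Takesaki--Takai identifications, absorption of the inner stabilising automorphisms) that the construction preserves $C_0(X)$-linearity, so that all equivalences genuinely live in the category of equivariant bimodules over $X$. This is essentially automatic because $C_0(X)$ sits in the centre of the multiplier algebras involved and commutes with all relevant group actions, but verifying compatibility at each step is where the bulk of the work lies.
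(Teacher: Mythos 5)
Your proposal is correct and follows essentially the same route as the paper, which states the proposition without a detailed proof: the first half is exactly the Takesaki--Takai argument sketched in the paper's introduction, and the second half is the descent/duality argument for equivariant equivalence bimodules that the paper delegates to \cite{E-memoir, EKQR}. The only cosmetic point is that in the converse direction the stabilising factor is $\K(\ell^2(\ZZ^n))$ (identified with $\K(L^2(\TT^n))$ via Fourier transform), which does not affect the argument.
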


Using the above Proposition  together with the results of \cite{EW1,EW2}
we can give a complete classification of NCP torus bundles
up to equivariant Morita equivalence over $X$.
In general, if $G$ is a (second countable) locally compact group, it is shown in 
\cite[Proposition 5.1]{EW1} that 
two fibre-wise actions $\alpha$ and $\beta$ on $C_0(X,\K)$ are $G$-equivariantly Morita equivalent over $X$
if and only if they are {\em exterior equivalent}, i.e., there exists a continuous 
map $v:G\to UM(C_0(X,\K))\cong C(X, \U)$ such that 
$$\alpha_s=\Ad v_s\circ \beta_s\quad\text{and}\quad 
v_{st}=v_s\beta_s(v_t)$$
for all $s,t\in G$, where $\U=\U(l^2(\NN))$ denotes the unitary group.
Note that exterior equivalent actions have 
isomorphic crossed products (e.g. see  \cite{Ped}).
It is shown in \cite{EW1} (following the ideas of \cite{CKRW}), that 
the set $\E_G(X)$ of all exterior equivalence classes $[\alpha]$ of 
fibre-wise actions $\alpha:G\to \Aut_{C_0(X)}(C_0(X,\K))$ forms a group with 
multiplication given by 
$$[\alpha][\beta]=[\alpha\otimes_{C_0(X)}\beta]$$
where $\alpha\otimes_{C_0(X)}\beta$ denotes the diagonal action on 
$$C_0(X,\K)\otimes_{C_0(X)}C_0(X,\K)\cong C_0(X,\K\otimes \K)\cong C_0(X,\K).$$
Hence, it follows from Proposition \ref{prop-dualbundle} that the problem of classifying 
NCP $\TT^n$-bundles over $X$ up to equivariant Morita equivalence is
equivalent to the problem of describing the group $\E_{\ZZ^n}(X)$!

We first consider the case $X=\{\pt\}$, i.e., the classification of group actions on $\K$ 
up to exterior equivalence. Let $\PU=\U/\TT1$ denote the projective unitary group.
Then $\PU\cong \Aut(\K)$ via $[V]\mapsto \Ad V$ and an action of $G$ on $\K$
 is a strongly continuous homomorphism 
$\alpha:G\to \PU$. Choose a Borel lift $V:G\to \U$ for $\alpha$. It is then  easy  to check that
$$V_sV_t=\om_{\alpha}(s,t)V_{st}$$
for some Borel group-cocycle $\om_{\alpha}\in Z^2(G,\TT)$. One can then show that
$$\E_G(\{\pt\})\to H^2(G,\TT);[\alpha]\mapsto [\om_{\alpha}]$$
is an isomorphism of groups.
To describe the crossed product $\K\rtimes_{\alpha}G$ in terms of
 $[\om_{\alpha}]$ we have to 
recall the construction of the {\em twisted group algebra} 
$C^*(G,\om_{\alpha})$: It is defined as the enveloping $C^*$-algebra of the 
twisted $L^1$-algebra $L^1(G,\om_{\alpha})$, which is
defined as the  Banach space $L^1(G)$ with twisted convolution/involution
 given by the formulas
$$f*_{\om_{\alpha}}g(s)=\int_G f(t)g(t^{-1}s)\om_{\alpha}(t, t^{-1}s)\,dt
\quad\text{and}\quad
f^*(s)=\Delta(s^{-1})\overline{\om_{\alpha}(s, s^{-1})f(s^{-1})}.$$
The crossed product $\K\rtimes_{\alpha}G$ is then isomorphic to $\K\otimes C^*(G,\bar{\om}_{\alpha})$
(e.g. see \cite[Theorem 1.4.15]{E-memoir}), where $\bar{\om}_{\alpha}$ denotes the complex conjugate of ${\om}_{\alpha}$.
Notice that up to canonical isomorphism, $C^*(G,\om_{\alpha})$ only depends on the cohomology 
class of $\om_{\alpha}$ in $H^2(G,\TT)$. 

If $G=\ZZ^n$, then one can show that 
any cocycle $\om:\ZZ^n\times \ZZ^n\to \TT$ is equivalent to one of the form 
\begin{equation}\label{eq-cocycleZn}
(n,m)\mapsto 
\om_{\Theta}(n,m)=e^{2\pi i\lk \Theta n,m\rk},
\end{equation}
where $\Theta\in M(n\times n,\RR)$ 
is a  strictly upper triangular matrix. 
If we denote by $u_1,\ldots, u_n\in L^1(G,\om_{\Theta})$ the Dirac functions of the unit vectors 
$e_1,\ldots, e_n\in \ZZ^n$,
we can check that the twisted group algebra $C^*(\ZZ^n, \om_{\Theta})$ coincides with the universal
C*-algebra generated by $n$ unitaries $u_1, \ldots , u_n$ satisfying the relations
$$u_ju_i =e^{2\pi i\Theta_{ij}}u_i u_j$$
and therefore coincide with what is usually considered as a non-commutative $n$-torus!

Suppose now that $\alpha:G\to \Aut(C_0(X,\K))$ is any fibre-wise action.
We then obtain a map $f_{\alpha}:X\to H^2(G,\TT)$ given by the composition
 $$
 \begin{CD}
 X  @>x\mapsto [\alpha^x]>> \E_G(\{\pt\})  @>[\alpha^x]\mapsto  [\om_{\alpha^x}]>> H^2(G,\TT),
\end{CD}
 $$
and it is shown in \cite[Lemma 5.3]{EW1} that this map is actually continuous with respect to the 
Moore topology on $H^2(G,\TT)$ as introduced in \cite{mo3}. Thus we obtain 
a homomorphism of groups $\Phi: \E_G(X)\to C(X, H^2(G,\TT)); \alpha\mapsto f_{\alpha}$. If 
 $G$ is abelian and  compactly generated, then the kernel of this map
is isomorphic to the group $\check{H}^1(X, \widehat{G})$, which 
classifies the principal $\widehat{G}$-bundles over $X$.
If $q:Y\to X$ is such a bundle,
the corresponding action $\alpha_Y: G\to \Aut(C_0(X,\K))$ is just
the dual action  of $G$ on $C_0(Y)\rtimes \widehat{G}\cong C_0(X,\K)$ -- you should 
compare this with the discussion in the introduction. 
We thus obtain an exact sequence
$$0\to \check{H}^1(X,\widehat{G})\to \E_G(X)\stackrel{\Phi}{\to} C(X,H^2(G,\TT)).$$
Recall further that if  $1\to Z\to H\to G\to 1$ is a locally compact central extension of $G$
and if $c:G\to H$ is any Borel section, then
$$\eta: G\times G\to Z; \eta(s,t)=c(s)c(t) c(st)^{-1}$$
is a cocycle in $Z^2(G,Z)$ which represents the given extension in $H^2(G,Z)$.
The  {\em transgression map}
$$\tg :\widehat{Z}\to H^2(G,\TT);\chi\mapsto [\chi\circ \eta]$$
is defined by composition of $\eta$ with the characters of $Z$.
The extension 
 $1\to Z\to H\to G\to 1$ is called
 a {\em representation group} for $G$ (following \cite{mo2}) if $\tg:\widehat{Z}\to H^2(G,\TT)$ is an 
 isomorphism of  groups. It is shown in \cite[Corollary 4.6]{EW1} 
 that such a representation group exists for
 all compactly generated abelian groups.
 Using the obvious homomorphism $Z\to C(\widehat{Z},\TT)$
and composing it with $\eta$ we obtain a cocycle (also denoted $\eta$) in $Z^2(G, C(\widehat{Z},\TT))$
which determines a fibre-wise action $\mu:G\to \Aut(C_0(\widehat{Z},\K))$
as described in \cite[Theorem 5.4]{EW1}. Identifying $C(X, H^2(G,\TT))$ with 
$C(X,\widehat{Z})$ via the trangression map, we then obtain a splitting homomorphism
$$F: C(X,\widehat{Z})\to \E_G(X);  f\mapsto f^*(\mu),$$
where $f^*(\mu)$ denotes the {\em pull-back of $\mu$ to $X$ via $f$}, which is defined via the
diagonal action
$$f^*(\mu)=\id\otimes_{C(\widehat{Z})}\mu:G\to \Aut\big(C_0(X)\otimes_{C_0(\widehat{Z}),f}C_0(\widehat{Z},\K)\cong C_0(X,\K)\big).$$
Notice that the homomorphism $F$ depends on the choice of the representation group 
$H$ and is therefore not canonical. However, for $G=\ZZ^n$ it is unique up to isomorphism of groups
by \cite[Proposition 4.8]{EW1}. 
We can now combine all this in the following statement 
(note that a similar result holds for many non-abelian groups by \cite[Theorem 5.4]{EW1}
and \cite{EN}).

\begin{thm}[{cf \cite[Theorem 5.4]{EW1}}]\label{thm-class}
Suppose that $G$ is a compactly generated second countable abelian group 
with fixed representation group $0\to Z\to H\to G\to 1$.
Let $X$ be
a second countable locally compact space. Then the exterior equivalence 
classes of fibre-wise actions of $G$ 
on $C_0(X,\K)$ are classified by all pairs $([q:Y\to X], f)$, where 
$[q:Y\to X]$ denotes the isomorphism class of a principal $\widehat{G}$-bundle $q:Y\to X$
over $X$, and $f\in C(X, \widehat{Z})$.
\end{thm}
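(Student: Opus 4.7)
The plan is to upgrade the exact sequence
\[
0 \to \check{H}^1(X,\widehat{G}) \to \E_G(X) \xrightarrow{\Phi} C(X, H^2(G,\TT))
\]
established just before the theorem into a split short exact sequence. Once this is done the classification falls out: the kernel is already identified with $\check{H}^1(X,\widehat{G})$, which in turn parametrises isomorphism classes of principal $\widehat{G}$-bundles $q\colon Y\to X$ by standard \v{C}ech theory, while a splitting of $\Phi$ identifies the remaining coordinate with $C(X,\widehat{Z})\cong C(X,H^2(G,\TT))$.

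The heart of the argument is to show that the homomorphism $F\colon C(X,\widehat{Z})\to \E_G(X)$, $f\mapsto f^*\mu$, is a section for $\Phi$, i.e.\ that $\Phi\circ F=\id$. By construction of the universal action $\mu\colon G\to\Aut(C_0(\widehat{Z},\K))$ in [EW1, Theorem 5.4], the Mackey obstruction of the fibre action $\mu^\chi$ at a point $\chi\in\widehat{Z}$ equals $[\chi\circ\eta]=\tg(\chi)$. Since pulling back along $f$ realises the fibre $(f^*\mu)^x$ as $\mu^{f(x)}$, the Mackey obstruction of $(f^*\mu)^x$ is $\tg(f(x))$. Hence the classifying map $f_{f^*\mu}\colon X\to H^2(G,\TT)$ equals $\tg\circ f$, which under the transgression isomorphism $\tg\colon \widehat{Z}\to H^2(G,\TT)$ corresponds to $f$ itself.

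With the splitting in hand, and using that the group operation $[\alpha][\beta]=[\alpha\otimes_{C_0(X)}\beta]$ on $\E_G(X)$ is automatically abelian by symmetry of the tensor product, we obtain
\[
\E_G(X)\cong \check{H}^1(X,\widehat{G})\oplus C(X,\widehat{Z}).
\]
The resulting bijection $[\alpha]\leftrightarrow \bigl([\alpha]\cdot F(\Phi([\alpha]))^{-1},\,\Phi([\alpha])\bigr)$, combined with the above identifications, produces the parametrisation of exterior equivalence classes by pairs $([q\colon Y\to X], f)$.

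The main obstacle will be the cocycle bookkeeping behind $\Phi\circ F=\id$: one must trace through the definition of $\mu$ starting from a Borel section $c\colon G\to H$ and the corresponding cocycle $\eta(s,t)=c(s)c(t)c(st)^{-1}$ to verify pointwise that the Mackey obstruction of $\mu^{\chi}$ is indeed $\tg(\chi)$. Continuity in the Moore topology of the resulting map $x\mapsto [\om_{(f^*\mu)^x}]$ then reduces to the evident continuity of $f$ together with the fact that $\tg$ is a topological isomorphism by the representation group property.
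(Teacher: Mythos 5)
Your proposal is correct and follows essentially the same route as the paper, which states this theorem as a direct assembly of the preceding discussion (the exact sequence $0\to \check{H}^1(X,\widehat{G})\to \E_G(X)\stackrel{\Phi}{\to} C(X,H^2(G,\TT))$, the identification of the kernel with principal $\widehat{G}$-bundles via dual actions, and the splitting $F\colon f\mapsto f^*\mu$ built from the representation group) together with the citation of \cite[Theorem 5.4]{EW1}. Your verification that $\Phi\circ F=\id$ via the Mackey obstruction of the fibres $(f^*\mu)^x=\mu^{f(x)}$ being $\tg(f(x))$ is exactly the content that makes $F$ a section, and the resulting bijection with pairs $([q\colon Y\to X],f)$ is the one intended.
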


The above classification also leads to a direct description of the corresponding 
crossed products $C_0(X,\K)\rtimes G$ in termes of the given topological data: It is shown in 
\cite{EW2} that if $[\alpha]$ corresponds to the pair $([q:Y\to X], f)$ 
then 
$$C_0(X,\K)\rtimes G\cong Y*\big(f^*\big(C^*(H)\big)(X)\big)\otimes \K,$$
by a $\widehat{G}$-equivariant bundle isomorphism over $X$.
Here $f^*\big(C^*(H)\big)(X)$ is the pull-back of $C^*(H)(\widehat{Z})$, where
the $\widehat{Z}$-C*-algebra bundle structure of $C^*(H)$ is given by the structure map
$$C_0(\widehat{Z})\cong C^*(Z)\to ZM(C^*(H)),$$
with action of $C^*(Z)$ on $C^*(H)$  given by convolution (see \cite[Lemma 6.3]{EW2}
for more details).
The product $Y*\big(f^*C^*(H))(X)$ of the principal $\widehat{G}$-bundle $q:Y\to X$
with $f^*(C^*(H))(X)$ is described carefully in \cite[\S 3]{EW2} and is 
a generalization of the usual fibre product
$$Y*Z:=(Y\times_XZ)/{\widehat{G}}$$
if $\widehat{G}$ acts fibre-wise on the fibred space $p:Z\to X$, and 
 diagonally on $Y\times_XZ=\{(y,z)\in Y\times Z: q(y)=p(z)\}$.
In this paper we only need the construction in case of $\widehat{G}=\TT^n$ being compact, 
in which case the fibre product $Y*A(X)$ can be defined as the fixed point algebra
$$q^*A(Y)^G= \big(C_0(Y)\otimes_qA(X)\big)^G$$
under the diagonal action $g(\varphi\otimes a)=g^{-1}(\varphi)\otimes\alpha_g(a)$ for $g\in G$,
$\varphi\in C_0(Y)$ and $a\in A(X)$. The $G$-action on $Y*A(X)$ is then induced by the given 
 $G$-action on $C_0(Y)$, i.e., we have
 $$(Y*\alpha)_g(\varphi\otimes a)=g(\varphi)\otimes a=\varphi\otimes\alpha_g(a).$$

We now specialize to the group $G=\ZZ^n$. It is shown in \cite[Example 4.7]{EW1} 
that a representation group
for $\ZZ^n$ can be constructed as the central group extension
$$1\to Z_n\to H_n\to \ZZ^n\to 1$$
where $Z_n$ is the additive  group of strictly upper triangular integer matrices
and $H_n=Z_n\times \ZZ^n$ with multiplication given by
$$(M,\m)\cdot(K,\l)=(M+K+ \eta(\m,\l), \m+\l)\quad\text{where} \quad\eta(\m,\l)_{ij}=l_im_j.$$
for $\m=(m_1,\ldots, m_n), \l=(l_1,\ldots, l_n)\in \ZZ^n$. 
Let $T_n:=\widehat{Z}_n$ denote the dual group of $Z_n$. Since $Z_n$ is canonically isomorphic to 
$\ZZ^{\frac{n(n-1)}{2}}$, we have $T_n\cong\TT^{\frac{n(n-1)}{2}}$. 
If we express a  character $\chi\in T_n$ 
by a real upper triangular matrix $\Theta=(a_{i,j})_{1\leq i,j\leq n}$ via 
$$\chi_\Theta(M)=\prod_{i<j} (e^{2\pi i a_{ij}})^{m_{ij}}=\exp\big( 2\pi i\sum_{i<j} a_{ij}m_{ij}\big),$$
we see that the transgression map $\tg: T_n\to H^2(\ZZ^n,\TT);\chi\mapsto[\chi\circ \eta]$ sends 
$ \chi_{\Theta}$ to the class of the cocycle $\om_{\Theta}$ as defined
in (\ref{eq-cocycleZn}) and is an isomorphism of groups.

As a consequence of the above discussion, we see that 
$C^*(H_n)$  serves as a ``universal'' NCP $\TT^n$-bundle 
over $\TT^{\frac{n(n-1)}{2}}$ (where the action of $\TT^n$ is the canonical dual action 
of $\TT^n=\widehat{\ZZ^n}$ on $C^*(H_n)$). 
We finally arrive at the following classification of non-commutative $\TT^n$-bundles
up to $\TT^n$-equivariant Morita equivalence over $X$:

\begin{thm}\label{thm-T-bundle}
Let $X$ be a second countable locally compact space. Then the set of 
$\TT^n$-equivariant Morita equivalence classes  of NCP $\TT^n$-bundles over $X$
is classified by the set of all pairs
$([q:Y\to X], f)$ with $q:Y\to X$ a (commutative) principal $\TT^n$-bundle over $X$ and 
 $f:X\to \TT^{\frac{n(n-1)}{2}}$ a continuous map. Given these data, the 
  corresponding equivalence class of NCP $\TT^n$-bundles
is represented by the algebra
$$A_{(Y,f)}(X):=Y* f^*\big(C^*(H_n)\big)(X).$$
\end{thm}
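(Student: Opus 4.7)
The plan is to assemble Theorem \ref{thm-T-bundle} from three ingredients already in place: Proposition \ref{prop-dualbundle}, Theorem \ref{thm-class}, and the explicit crossed product description recalled from \cite{EW2}. First I would use Proposition \ref{prop-dualbundle} to translate the classification problem: $\TT^n$-equivariant Morita equivalence classes of NCP $\TT^n$-bundles over $X$ correspond bijectively to Morita (equivalently, exterior) equivalence classes of fibre-wise actions $\beta : \ZZ^n \to \Aut(C_0(X,\K))$. In other words, the set to be classified is exactly the group $\E_{\ZZ^n}(X)$.

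Next I would invoke Theorem \ref{thm-class} with $G = \ZZ^n$, whose dual is $\widehat G = \TT^n$. The central extension $1 \to Z_n \to H_n \to \ZZ^n \to 1$ recalled just above the theorem is a representation group for $\ZZ^n$ (by \cite[Example 4.7]{EW1}), and the transgression map identifies $\widehat{Z_n}$ with $\TT^{n(n-1)/2}$ via the matrix-coordinate description $\chi_\Theta \leftrightarrow [\om_\Theta]$. Plugging these identifications into Theorem \ref{thm-class} yields the claimed classifying set: pairs $([q:Y\to X], f)$ with $q:Y\to X$ a principal $\TT^n$-bundle and $f : X \to \TT^{n(n-1)/2}$ continuous.

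It then remains to exhibit a representative algebra for the class indexed by $([q:Y\to X], f)$. For this I would invoke the description from \cite{EW2} recalled in the paragraph after Theorem \ref{thm-class}: the crossed product of the corresponding fibre-wise $\ZZ^n$-action on $C_0(X,\K)$ is $\TT^n$-equivariantly isomorphic, as a bundle over $X$, to
\[
C_0(X,\K) \rtimes \ZZ^n \;\cong\; \bigl(Y * f^*(C^*(H_n))(X)\bigr) \otimes \K.
\]
Composing this isomorphism with the Morita equivalence of Proposition \ref{prop-dualbundle} (and absorbing the tensor factor $\K$, which preserves $\TT^n$-equivariant Morita equivalence over $X$) delivers $A_{(Y,f)}(X) = Y * f^*(C^*(H_n))(X)$ as a canonical representative, with its dual $\TT^n$-action.

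The only non-routine step is ensuring that the identifications line up compatibly: namely that the splitting $F : C(X,\widehat{Z_n}) \to \E_{\ZZ^n}(X)$ obtained from $H_n$ really does produce the action whose crossed product is $f^*(C^*(H_n))(X)\otimes\K$, and that the principal $\TT^n$-bundle component of the extension recovered from the kernel $\check H^1(X,\TT^n)$ matches the $Y$ appearing in the fibre product. Both points are built into the cited results from \cite{EW1, EW2}; once they are invoked the theorem follows by direct combination.
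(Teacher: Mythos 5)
Your proposal is correct and follows exactly the route the paper takes: Theorem \ref{thm-T-bundle} is presented there as the direct combination of Proposition \ref{prop-dualbundle}, Theorem \ref{thm-class} specialized to $G=\ZZ^n$ with the representation group $H_n$ and the identification $\widehat{Z}_n\cong\TT^{\frac{n(n-1)}{2}}$ via the transgression map, and the crossed-product description $C_0(X,\K)\rtimes\ZZ^n\cong \bigl(Y*f^*(C^*(H_n))(X)\bigr)\otimes\K$ from \cite{EW2}. Your remark about absorbing the stabilization $\otimes\K$ and about the compatibility of the splitting $F$ with the fibre-product representative matches what the paper implicitly relies on from \cite{EW1,EW2}.
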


\begin{remark}\label{rem-Heisenberg}
Notice that the group $H_2$ is just the discrete Heisenberg group. 
Therefore, the C*-group algebra $C^*(H_2)$ of the Heisenberg group serves a 
a universal NCP $\TT^2$-bundle in the sense explained above.
\end{remark}

There is a natural action of $GL_n(\ZZ)$ on the set of equivalence classes of NCP $\TT^n$-bundles
over $X$ which we want to describe in terms of the above classification. 
For this we first introduce some notation:
If $A(X)$ is an NCP $\TT^n$-bundle over $X$ and if $\Psi\in GL_n(\ZZ)$, we denote by
$A_{\Psi}(X)$ the NCP $\TT^n$-bundle which we obtain from $A(X)$ by composing the 
given action of $\TT^n$ by $\Psi^{-1}$ (viewed as an automorphism of $\TT^n$).
Of course, if we start with a commutative principal $\TT^n$-bundle $q:Y\to X$, we obtain 
in the same way a principal $\TT^n$-bundle $q_{\Psi}:Y_{\Psi}\to X$.

If we denote for $1\leq i,j\leq n$ by $e_{i,j}$ the elementary matrix
of $M_n(\ZZ)$ (with $1$ in  the $i$-th row and $j$-th column entry and
$0$ every else) and by $f_1,\ldots,f_n$ the canonical basis of
$\ZZ^n$, then we have for $1\leq i<j\leq n$  the following  relations in $H_n$:
\begin{itemize}
\item the elements $(e_{i,j},0)$   are central;
\item $(0,f_i)(0,f_j)=(e_{i,j},0)(0,f_j)(0,f_i)$.
\end{itemize}
Moreover, these generators and relations define  the group $H_n$:
\begin{prop}
The group $H_n$ is the group defined by the generators
$x_1,\ldots,x_n$ and $\{y_{i,j};\, 1\leq i<j\leq n\}$ and the following
relations  for $1\leq i<j\leq n$:
\begin{itemize}
\item the element $y_{i,j}$ is central;
\item $x_i\cdot x_j=y_{i,j}\cdot x_j\cdot x_i$.
\end{itemize}

\end{prop}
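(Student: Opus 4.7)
The plan is to verify the universal property via a normal-form argument. Let $G$ denote the group presented by the generators $x_1,\dots,x_n$ and $\{y_{i,j}\}_{1\leq i<j\leq n}$ subject to the stated relations. The two displayed relations in $H_n$ immediately preceding the proposition confirm that the assignment $x_i\mapsto(0,f_i)$, $y_{i,j}\mapsto(e_{i,j},0)$ respects the defining relations of $G$, and therefore extends by the universal property of presentations to a group homomorphism $\phi:G\to H_n$. Surjectivity of $\phi$ is immediate, since the set $\{(0,f_i)\}\cup\{(e_{i,j},0)\}$ visibly generates $H_n$.

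For injectivity I would first establish a normal form. Set $y^M:=\prod_{i<j}y_{i,j}^{M_{ij}}$ for $M\in Z_n$ (well-defined because the $y_{i,j}$'s are central, hence pairwise commuting) and $x^{\m}:=x_1^{m_1}\cdots x_n^{m_n}$ for $\m\in\ZZ^n$. I claim that every element of $G$ can be brought into the form $y^Mx^{\m}$: starting from an arbitrary word in the generators, centrality of the $y$'s lets me push all of them to the left, while the consequence $x_jx_i=y_{i,j}^{-1}x_ix_j$ (for $i<j$) of the commutation relation lets me sort the remaining $x$-part into increasing index order. Each elementary swap strictly decreases the number of inversions in the $x$-part of the word (any newly created $y$-factor being immediately pushed to the far left by centrality), so this rewriting terminates in finitely many steps.

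With normal forms in hand, a direct computation in $H_n$ using the multiplication rule $(M,\m)(K,\l)=(M+K+\eta(\m,\l),\m+\l)$ gives
$$\phi(y^Mx^{\m})=\bigl(M+Q(\m),\,\m\bigr),$$
where $Q(\m)\in Z_n$ is a matrix depending quadratically on $\m$ but not on $M$. The map
$$\psi:Z_n\times\ZZ^n\longrightarrow G,\qquad (M,\m)\longmapsto y^Mx^{\m},$$
is surjective by existence of normal forms, while $\phi\circ\psi$ is the shear $(M,\m)\mapsto(M+Q(\m),\m)$, which is a bijection $Z_n\times\ZZ^n\to Z_n\times\ZZ^n=H_n$. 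This forces $\psi$ to be injective, hence bijective, and therefore $\phi$ is bijective as well.

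The only real obstacle is verifying that the rewriting algorithm always terminates and produces a canonical output; once that is in place, the identification with $H_n$ is forced by the group law stated in the excerpt.
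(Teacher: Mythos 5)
Your argument is correct. The combinatorial core---rewriting an arbitrary word into the sorted form $y^Mx^{\m}$ by pushing the central generators to one side and bubble-sorting the $x$-letters---is exactly the normal-form observation the paper also relies on (there phrased as: any word can be written $x_1^{k_1}\cdots x_n^{k_n}\cdot w'$ with $w'$ a word in the $y_{i,j}$). Where you diverge is in how this gets converted into injectivity. The paper compares the two central extensions $1\to\ker\lambda\to H'_n\to\ZZ^n\to 1$ and $1\to Z_n\to H_n\to\ZZ^n\to 1$: the normal form shows $\ker\lambda$ is generated by the $y_{i,j}$, the section $e_{i,j}\mapsto y_{i,j}$ then forces $\ker\lambda\cong Z_n$, and the Five Lemma finishes. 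You instead compute $\phi$ explicitly on normal forms and observe that the surjection $\psi:(M,\m)\mapsto y^Mx^{\m}$ composed with $\phi$ is a shear of $Z_n\times\ZZ^n$, hence a bijection, which forces $\psi$ (and then $\phi$) to be bijective---a clean way to obtain uniqueness of the normal form as a by-product rather than as an input. Your route costs the explicit computation of $\phi(y^Mx^{\m})=(M+Q(\m),\m)$ but dispenses with the auxiliary epimorphism $\lambda$ and the Five Lemma; the paper's route is the reverse trade. One small point worth making explicit in your write-up: an arbitrary word also contains inverse letters $x_i^{-1}$ and $y_{i,j}^{-1}$, but since every commutator among the $x_i^{\pm1}$ is a central power of some $y_{i,j}$, the same swap-and-push procedure and the same inversion-count termination argument apply verbatim, so this is only a matter of saying it.
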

\begin{proof}Let $H'_n$ be the group defined by the above generators and
relations. Then we have an obvious epimorphism from $H'_n\to H_n$. From  the universal property of $H'_n$, we also get an
epimorphism $\lambda:H'_n\to \ZZ^n$ such that the following diagram commutes
$$
\begin{CD}H'_n @>\lambda>>\ZZ^n\\
@VVV      @V\parallel VV\\
H_n@>>>\ZZ^n.
\end{CD}
$$
The left vertical arrow gives rise to an epimorphism $\ker \lambda\to Z_n$, splitted by
$e_{i,j}\mapsto x_{i,j}$. In fact, using the commutation rules of $H'_n$, any
word $w$ can be writen $x_1^{k_1}\cdots x_n^{k_n}\cdot w'$, where
$k_1,\ldots,k_n$ belongs to $\ZZ$ and where $w'$ is a word with letters
in $\{y_{i,j};\, 1\leq i<j\leq n\}$. We then have $\lambda(w)=(k_1,\ldots,k_n)$.
It follows that
 $\ker \lambda$ is generated by $\{y_{i,j};\, 1\leq i<j\leq n\}$. Thus, the epimorphism
$H'_n\to H_n$ induces by restriction an isomorphism  $\ker
\lambda\stackrel{\cong}{\to} Z_n$. The result then follows from the Five Lemma.
\end{proof}
\begin{cor}\label{cor-universal}
The $C^*$-algebra $C^*(H_n)$ is the universal $C^*$-algebra generated
by unitaries
$U_1,\ldots,U_n$ and $\{V_{i,j};\, 1\leq i<j\leq n\}$ and the following
relations  for $1\leq i<j\leq n$:
\begin{itemize}
\item the elements  $V_{i,j}$ are central;
\item $U_i\cdot U_j=V_{i,j}\cdot U_j\cdot U_i$.
\end{itemize}
\end{cor}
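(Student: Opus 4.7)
The plan is to derive this corollary as a direct translation of the group-theoretic presentation of $H_n$ given in the preceding proposition into the language of $C^*$-algebras, via the universal property of the full group $C^*$-algebra of a discrete group.

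First, I would verify the easy direction: inside $C^*(H_n)$, let $U_i$ denote the canonical unitary associated to the generator $x_i\in H_n$ and $V_{i,j}$ the unitary associated to $y_{i,j}$. Since the relations ``$y_{i,j}$ is central'' and ``$x_i x_j = y_{i,j} x_j x_i$'' hold in $H_n$, the same relations hold among the corresponding unitaries in $C^*(H_n)$. Moreover, since the $x_i$ and $y_{i,j}$ generate $H_n$ as a group, the $U_i$ and $V_{i,j}$ generate $C^*(H_n)$ as a $C^*$-algebra (the group elements are dense in the multiplier-level unitary group and their linear span is dense in $\ell^1(H_n)$, which is dense in $C^*(H_n)$).

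Next, for the universal property, suppose $B$ is any $C^*$-algebra containing unitaries $u_1, \ldots, u_n$ and $\{v_{i,j}: 1 \leq i < j \leq n\}$ satisfying the stated relations. By the universal property of $H_n$ established in the preceding proposition, the assignments $x_i \mapsto u_i$, $y_{i,j}\mapsto v_{i,j}$ extend uniquely to a group homomorphism $\rho : H_n \to U(B)$. Composing $\rho$ with the inclusion $U(B)\hookrightarrow B$ yields a unitary representation of the discrete group $H_n$ in $B$, which by the universal property of the full group $C^*$-algebra integrates to a unique $*$-homomorphism $\tilde\rho : C^*(H_n) \to B$ sending $U_i \mapsto u_i$ and $V_{i,j}\mapsto v_{i,j}$. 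Uniqueness of $\tilde\rho$ follows from the fact that $C^*(H_n)$ is generated by $\{U_i, V_{i,j}\}$ as a $C^*$-algebra.

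There is essentially no obstacle here beyond quoting the two universal properties in succession; the real work was done in the preceding proposition, which verified that the given generators and relations really do define $H_n$ rather than some larger group. Once that presentation is in hand, the passage to $C^*(H_n)$ is purely formal.
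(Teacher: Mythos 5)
Your proof is correct and matches the paper's intent: the paper states this corollary without proof, treating it as an immediate formal consequence of the group presentation of $H_n$ established in the preceding proposition, and your two-step appeal to the universal property of the presented group followed by the universal property of the full group $C^*$-algebra is exactly the standard argument being left implicit.
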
 
The unitaries  $V_{i,j}$ for $1\leq i<j\leq n$ generate a copy of
$C(\TT^{\frac{n(n-1)}{2}})$ providing the $C^*$-algebra bundle
structure of  $C^*(H_n)$ over $\TT^{\frac{n(n-1)}{2}}\cong
\Lambda_2(\RR^n)/\Lambda_2(\ZZ^n)$.
Under this identification, $V_{i,j}=e^{2\pi i f^*_i\wedge f^*_j}$, where
$(f^*_1,\ldots,f^*_n)$ is the dual basis to the canonical basis
$(f_1,\ldots,f_n)$ of $\RR^n$. This  allows to extend the definition
of the unitaries $V_{i,j}$ for values 
  $1\leq j\leq i\leq n$ by setting  $V_{i,j}:=e^{2\pi i f^*_i\wedge f^*_j}$. Then
$U_i\cdot U_j=V_{i,j}\cdot U_j\cdot U_i$ for all $1\leq i,j \leq
n$. If we denote by $\beta^n$ the
canonical dual action of $\TT^n$ on $C^*(H_n)$, then
$\beta^n_{(z_1,\ldots,z_n)}(U_k)=z_kU_k$.

\medskip

Recall that if  $A(X)$ is a  NCP $\TT^n$-bundle with $\TT^n$-action 
$\alpha:\TT^n\to \Aut(A(X))$ and if $\Psi$ is any  matrix of
$GL_n(\ZZ)$, then  $A_{\Psi}(X)$ is defined as the NCP $\TT^n$-bundle with respect to
$\alpha\circ \Psi^{-1}$.  

\begin{lem}\label{lem-aut-heis}
Let $\Psi$ be a matrix of  $GL_n(\ZZ)$ and let $\Lambda_2\Psi$ be the
automorphism of $\Lambda_2(\ZZ^n)$ induced by  $\Psi$. Then using the
identification $\TT^{\frac{n(n-1)}{2}}\cong
\Lambda_2(\RR^n)/\Lambda_2(\ZZ^n)$, we have an automorphism
$\Upsilon_\Psi:C^*(H_n)\to C^*(H_n)$ such that
\begin{enumerate}
\item  $\Upsilon_\Psi(f\cdot x)=f\circ \Lambda_2\Psi^{-1}\cdot \Upsilon_\Psi(x)$ for all
  $f$ in $C(\TT^{\frac{n(n-1)}{2}})$ and all $x$ in $C^*(H_n)$.
\item $\Upsilon_\Psi(\beta^n_z\cdot
  x)=\beta^n_{\Psi(z)}\cdot\Upsilon_\Psi(x)$, for all $z$ in $\TT^n$ and all
  $x$ in $C^*(H_n)$.
\end{enumerate}
\end{lem}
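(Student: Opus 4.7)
\medskip

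The plan is to define $\Upsilon_\Psi$ on generators via the universal description of $C^*(H_n)$ given in Corollary~\ref{cor-universal}, and then verify the relations. Write $\Psi = (\psi_{ki}) \in GL_n(\ZZ)$. I would set
\[
\Upsilon_\Psi(U_i) := U_1^{\psi_{1i}} U_2^{\psi_{2i}} \cdots U_n^{\psi_{ni}} \qquad (1 \leq i \leq n),
\]
with a fixed left-to-right ordering, and define $\Upsilon_\Psi$ on the central torus $C(\TT^{n(n-1)/2})$ by precomposition with $\Lambda_2\Psi^{-1}$, i.e.\ on generators $V_{i,j} = e^{2\pi i f_i^*\wedge f_j^*}$ by $\Upsilon_\Psi(V_{i,j}) = e^{2\pi i \Lambda_2\Psi^{-1*}(f_i^*\wedge f_j^*)}$, expressed as a product of the $V_{k,l}$'s (with the extended meaning for $k\geq l$).

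To see this is well-defined, I need to check the relations of Corollary~\ref{cor-universal}. The $\Upsilon_\Psi(V_{i,j})$ are central because they lie in the copy of $C(\TT^{n(n-1)/2})$ inside $C^*(H_n)$. The main point is then the commutation relation $U_i U_j = V_{i,j}\, U_j U_i$. Writing $\tilde U_i := \Upsilon_\Psi(U_i)$, one moves the letters $U_k^{\psi_{kj}}$ occurring in $\tilde U_j$ past the letters $U_l^{\psi_{li}}$ in $\tilde U_i$ one factor at a time, using $U_k U_l = V_{k,l} U_l U_k$ (and centrality of $V_{k,l}$). The total central factor accumulated is a product of $V_{k,l}^{\psi_{ki}\psi_{lj}}$ (summed over appropriate index pairs), which one recognises as the image under $\Lambda_2\Psi$ of the bivector $f_i \wedge f_j$, i.e.\ precisely $\Upsilon_\Psi(V_{i,j})$. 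This combinatorial determinant-style bookkeeping is the technical heart of the argument and the main obstacle; the computation mirrors the fact that $\Lambda_2\Psi$ is the induced map on $\Lambda_2\ZZ^n \cong Z_n$.

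Given this, $\Upsilon_\Psi$ extends to a unital $*$-homomorphism $C^*(H_n) \to C^*(H_n)$. Performing the same construction with $\Psi^{-1}$ and checking on generators that $\Upsilon_{\Psi^{-1}} \circ \Upsilon_\Psi = \id$ (this reduces to the identity $\Lambda_2(\Psi^{-1})\cdot\Lambda_2\Psi = \id$ together with the uniqueness of the ordered presentation) shows that $\Upsilon_\Psi$ is an automorphism.

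Finally I would verify the two stated properties. Property~(i) holds by construction: on $C(\TT^{n(n-1)/2}) \subset C^*(H_n)$, the map $\Upsilon_\Psi$ is precomposition with $\Lambda_2\Psi^{-1}$, which is exactly $f\mapsto f\circ \Lambda_2\Psi^{-1}$, and it extends to $C^*(H_n)$ since $\Upsilon_\Psi$ is $C(\TT^{n(n-1)/2})$-linear relative to this twist. Property~(ii) is a direct calculation on generators:
\[
\beta^n_{\Psi(z)}(\tilde U_i) = \prod_{k=1}^n \Psi(z)_k^{\psi_{ki}}\, \tilde U_i = z_i\, \tilde U_i = \Upsilon_\Psi(\beta^n_z(U_i)),
\]
the middle equality being the defining identity of the $GL_n(\ZZ)$-action on $\TT^n = \widehat{\ZZ^n}$. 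Since $\Upsilon_\Psi$ is an algebra homomorphism and the dual action is determined by its values on the generators $U_i$, property~(ii) follows on all of $C^*(H_n)$.
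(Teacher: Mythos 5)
Your overall strategy differs from the paper's: the paper defines $\Upsilon_\Psi$ only for elementary matrices $E_{k,l}(m)$ and permutation matrices, verifies (i) and (ii) there, and then uses the fact that these generate $GL_n(\ZZ)$ and that the two conditions are stable under composition; you instead propose a closed formula for arbitrary $\Psi$. That route could work, but your formula is off by a transpose--inverse, and this is a genuine error rather than a convention issue. Condition (i) \emph{forces} $\Upsilon_\Psi(V_{i,j})=V_{i,j}\circ\Lambda_2\Psi^{-1}=e^{2\pi i\,{}^t\Psi^{-1}f_i^*\wedge {}^t\Psi^{-1}f_j^*}$ (this is the first line of the paper's proof), i.e.\ the bivector attached to the $i$-th and $j$-th \emph{rows of $\Psi^{-1}$}. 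On the other hand, commuting your $\tilde U_i=\prod_k U_k^{\psi_{ki}}$ past $\tilde U_j$ produces the central element $\prod_{k,l}V_{k,l}^{\psi_{ki}\psi_{lj}}=e^{2\pi i\,(\sum_k\psi_{ki}f_k^*)\wedge(\sum_l\psi_{lj}f_l^*)}$, the bivector attached to the \emph{columns of $\Psi$}. These agree for $n=2$ (both reduce to $\det\Psi$) but not for $n\ge 3$: already for $\Psi=E_{1,2}(1)\in GL_3(\ZZ)$ one has $\Lambda_2\Psi(f_1\wedge f_3)=f_1\wedge f_3$ while $\Lambda_2({}^t\Psi^{-1})(f_1\wedge f_3)=f_1\wedge f_3-f_2\wedge f_3$. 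So the relations of Corollary \ref{cor-universal} are \emph{not} satisfied by your assignment together with $\Upsilon_\Psi(V_{i,j}):=V_{i,j}\circ\Lambda_2\Psi^{-1}$, and the map is not well defined as claimed. The same mismatch breaks (ii): with the paper's convention that $\Psi$ acts on $\TT^n=\RR^n/\ZZ^n$ by the standard matrix action (used explicitly in the paper's computation of $E_{k,l}(m)(z)$ and in the definition of $A_\Psi(X)$), one gets $\prod_k\Psi(z)_k^{\psi_{ki}}=e^{2\pi i({}^t\Psi\Psi t)_i}$, which equals $z_i$ only for orthogonal $\Psi$; your ``middle equality'' implicitly uses the contragredient action. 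The repair is to set $\Upsilon_\Psi(U_i):=\prod_k U_k^{(\Psi^{-1})_{ik}}$ (ordered product over the rows of $\Psi^{-1}$); note that the paper's elementary case $U_k\mapsto U_l^{-m}U_k$ for $E_{k,l}(m)$ has abelianization ${}^t\Psi^{-1}$, not $\Psi$.

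A secondary gap: even after the correction, $\Upsilon_{\Psi^{-1}}(\Upsilon_\Psi(U_i))$ is a product of ordered words which, when re-ordered, equals $U_i$ only up to a central unitary produced by the commutations; so $\Upsilon_{\Psi^{-1}}\circ\Upsilon_\Psi=\id$ does not hold ``on the nose'' and needs either an explicit check that these central corrections vanish or a softer surjectivity argument (the image contains all $V_{i,j}$ and each $U_i$ up to a central unitary, hence is everything). The paper sidesteps both issues entirely by only ever writing down $\Upsilon_\Psi$ for generators of $GL_n(\ZZ)$, where the words have length one or two and the inverse is exhibited explicitly.
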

\begin{proof}
Notice first  that $V_{i,j}\circ \Lambda_2\Psi^{-1}=e^{2\pi i ^t\Psi^{-1}
  f^*_i\wedge^t\Psi ^{-1}f^*_j}$.  
\begin{itemize}
\item We first prove the result for $\Psi$ given by the elementary matrix
  $E_{k,l}(m)=I_n+me_{k,l}$ of $GL_n(\ZZ)$ where $k\neq l$ and $m$ is
  in $\ZZ$. Then, 
  for $i\neq j$ and $k\neq l$, we have $E_{k,l}(m)^{-1}=E_{k,l}(-m)$ and
\begin{eqnarray}\label{eq-gen-1}
V_{i,j}\circ \Lambda_2 E_{k,l}(m)^{-1}&=&V_{i,j}\quad\text{ if
}k\notin\{i,j\}\text{ or } l\in\{i,j\} \\\label{eq-gen-2}
V_{j,k}\circ \Lambda_2 E_{k,l}(m)^{-1}&=&V_{j,k}V_{j,l}^{-m}\quad\text{ if
} j\neq k\text{ and } j\neq l.
\end{eqnarray}
There is a unique
morphism
$\Upsilon_{E_{k,l}(m)}:C^*(H_n)\to C^*(H_n)$ with image on generators
\begin{eqnarray*}
U_i&\mapsto&U_i\quad\text{ if } i\neq k\\
U_k&\mapsto&U_l^*U_k\\
V_{i,j}&\mapsto&V_{i,j}\quad\text{ if
}k\notin\{i,j\}\text{ or } l\in\{i,j\}\\
V_{j,k}&\mapsto&V_{j,k}V_{j,l}^*\quad\text{ if
} j\neq k\text{ and } j\neq l,
\end{eqnarray*} since  it is straightforward to check that the
operators on the right
hand side satisfy the relation as given in Corollary
\ref{cor-universal}. Moreover, $\Upsilon_{E_{k,l}(m)}$ is an
isomorphism with
$\Upsilon_{E_{k,l}(m)}^{-1}=\Upsilon_{E_{k,l}(-m)}$. It is enought to
check condition (i) on the generators $V_{i,j}$, which is done
in equations (\ref{eq-gen-1}) and (\ref{eq-gen-2}). Condition (ii) has to
be checked only on the generators $U_i$. If $z=(z_1,\ldots,z_n)$, then
the $i$-th component of 
$E_{k,l}(m)(z)$ is $z_i$ for  $i\neq k$ and $z_kz_l^m$ for $i=k$, and 
$$\Upsilon_\Psi(\beta^n_z\cdot
  U_i)=z_iU_i=\beta^n_{\Psi(z)}\cdot\Upsilon_\Psi(U_i)$$ for $i\neq k$
  and $$\Upsilon_\Psi(\beta^n_z\cdot
  U_k)=z_kU^{-m}_l\cdot U_k=(z_l\cdot U_l)^{-m}\cdot z_kz_l^mU_k=\beta^n_{\Psi(z)}\cdot\Upsilon_\Psi(U_k).$$
\item Let $\sigma$ be a permutation on $\{1,\ldots,n\}$ and let
$\Psi_\sigma=(\delta_{i,\sigma(j)})_{1\leq i,j\leq n}$ be the
permutation matrix of $GL_n(\ZZ)$ corresponding to $\sigma$. As
before, there is an automorphism 
 $\Upsilon_{\Psi_\sigma}$   of 
$C^*(H_n)$ uniqually defined  on generators by $U_i\mapsto U_{\sigma^{-1}(i)}$ and 
$V_{i,j}\mapsto V_{\sigma^{-1}(i),\sigma^{-1}(j)}$. Since
$$V_{\sigma^{-1}(i),\sigma^{-1}(j)}=V_{i,j}\circ \Lambda_2\Psi_{\sigma^{-1}}=V_{i,j}\circ
\Lambda_2\Psi_\sigma^{-1}$$ and
$$\beta^n_{\Psi_\sigma(z_1,\ldots,z_n)}=\beta_{(z_{\sigma(1)},\ldots,z_{\sigma(n)})},$$
the automorphism $\Upsilon_{\Psi_\sigma}$  satisfies conditions (i) and (ii) of the lemma.
\end{itemize}
Suppose now that $\Phi$ and $\Psi$ are two matrices in $GL_n(\ZZ)$ and let
  $\Upsilon_\Phi$ and $\Upsilon_\Psi$ be automorphisms of  $C^*(H_n)$
  satisfying conditions (i) and (ii)  with respect to $\Phi$ and
  $\Psi$. Then one easily checks that $\Upsilon_\Phi\circ\Upsilon_\Psi$  satisfies 
  conditions (i) and (ii)  with respect to $\Phi\circ\Psi$.
Since every matrix $\Phi$ of $GL_n(\ZZ)$ can be writen as a product of
elementary matrices and permutation matrices, this completes the proof.
\end{proof}

\begin{cor}\label{cor-twist-heis}
Let $\Psi$ be a matrix in  $GL_n(\ZZ)$. Then the pull-back
$(\Lambda_2\Psi)^*C^*(H_n)\left(\TT^{\frac{n(n-1)}{2}}\right)$ of $C^*(H_n)\left(\TT^{\frac{n(n-1)}{2}}\right)$ with respect to the homeomorphism $\Lambda_2\Psi:\TT^{\frac{n(n-1)}{2}}\to \TT^{\frac{n(n-1)}{2}}$ and equipped with the $\TT^n$-action 
$\Lambda_2\Psi^*\beta^n$ is $\TT^n$-equivariantly isomorphic to {$C^*(H_n)_\Psi\left(\TT^{\frac{n(n-1)}{2}}\right)$, i.e.}
$C^*(H_n)\left(\TT^{\frac{n(n-1)}{2}}\right)$ equipped with the $\TT^n$-action $\beta^n\circ\Psi^{-1}$.
\end{cor}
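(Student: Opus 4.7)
The plan is to build the required isomorphism directly from the automorphism $\Upsilon_\Psi\colon C^*(H_n)\to C^*(H_n)$ supplied by Lemma \ref{lem-aut-heis}, combined with the canonical C*-algebra isomorphism $\Phi_{\Lambda_2\Psi}\colon C^*(H_n)\to (\Lambda_2\Psi)^*C^*(H_n)$, $a\mapsto 1\otimes a$, given by Lemma \ref{lem-pull2}(iv) (applicable because $\Lambda_2\Psi$ is a homeomorphism of $\TT^{\frac{n(n-1)}{2}}$). I would then propose the composition
$$\Theta_\Psi:=\Phi_{\Lambda_2\Psi}\circ\Upsilon_\Psi:C^*(H_n)_\Psi\longrightarrow (\Lambda_2\Psi)^*C^*(H_n)$$
as the desired $\TT^n$-equivariant $C(\TT^{\frac{n(n-1)}{2}})$-algebra isomorphism, and verify the two characteristic properties.

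For $C(\TT^{\frac{n(n-1)}{2}})$-linearity, unwinding the balanced tensor product gives $\Phi_{\Lambda_2\Psi}(f\cdot a)=(f\circ\Lambda_2\Psi)\cdot\Phi_{\Lambda_2\Psi}(a)$, and composing with condition (i) of Lemma \ref{lem-aut-heis} (which contributes a compensating $\Lambda_2\Psi^{-1}$) yields $\Theta_\Psi(f\cdot x)=f\cdot\Theta_\Psi(x)$. For the $\TT^n$-equivariance, since $\beta^n$ is fibre-wise, $\Phi_{\Lambda_2\Psi}$ intertwines $\beta^n$ with the pull-back action $\Lambda_2\Psi^*\beta^n$; condition (ii), rewritten as $\Upsilon_\Psi\circ(\beta^n\circ\Psi^{-1})_z=\beta^n_z\circ\Upsilon_\Psi$, then gives $\Theta_\Psi\circ(\beta^n\circ\Psi^{-1})_z=(\Lambda_2\Psi^*\beta^n)_z\circ\Theta_\Psi$. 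That $\Theta_\Psi$ is a C*-algebra isomorphism is automatic as a composition of isomorphisms.

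No serious obstacle remains once Lemma \ref{lem-aut-heis} is granted; the corollary is essentially bookkeeping of how the two identifications transform the $C(\TT^{\frac{n(n-1)}{2}})$-structure and the $\TT^n$-action. The only place where it is easy to slip is keeping $\Psi$ and $\Psi^{-1}$ straight: the $\Lambda_2\Psi$ produced when $\Phi_{\Lambda_2\Psi}$ moves $f$ across the balanced tensor product is precisely what cancels the $\Lambda_2\Psi^{-1}$ appearing in condition (i), and similarly the $\Psi^{-1}$ twisting the action on the source cancels the $\Psi$ appearing in condition (ii). It is this cancellation—rather than either factor $\Phi_{\Lambda_2\Psi}$ or $\Upsilon_\Psi$ alone—that makes the composition $\Theta_\Psi$ into an honest $\TT^n$-equivariant bundle isomorphism over $\TT^{\frac{n(n-1)}{2}}$.
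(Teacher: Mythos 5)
Your proposal is correct and is essentially the paper's own proof: the paper defines the isomorphism as $x\mapsto 1\otimes_{\Lambda_2\Psi}\Upsilon_\Psi(x)$, which is exactly your composition $\Phi_{\Lambda_2\Psi}\circ\Upsilon_\Psi$, and appeals to Lemma \ref{lem-aut-heis} for the $C(\TT^{\frac{n(n-1)}{2}})$-linearity and $\TT^n$-equivariance. Your verification of the two cancellations (of $\Lambda_2\Psi^{-1}$ against the balanced tensor product relation, and of $\Psi^{-1}$ against condition (ii)) just spells out what the paper leaves implicit.
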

\begin{proof}
Let $\Upsilon_\Phi$ be an automorphism of $C^*(H_n)$ as in Lemma
\ref{lem-aut-heis}. Then the  lemma implies that the morphism
$C^*(H_n)\left(\TT^{\frac{n(n-1)}{2}}\right)\mapsto\Lambda_2\Psi^*C^*(H_n)\left(\TT^{\frac{n(n-1)}{2}}\right);\,x\mapsto
1\otimes_{\Lambda_2\Psi} \Upsilon_\Phi(x)$ is  an isomorphism of C*-algebra bundles
 over $\TT^{\frac{n(n-1)}{2}}$ which is equivariant with respect to the
 prescribed actions.
\end{proof}

\begin{lem}\label{lem-action}
Let  $A(X)$ be a NCP $\TT^n$-bundle 
 and let $q:Y\to X$ be any principal $\TT^n$-bundle. Then, for any $\Psi\in GL_n(\ZZ)$, we have
 $$(Y*A)_{\Psi}(X)=Y_{\Psi}*A_{\Psi}(X).$$
\end{lem}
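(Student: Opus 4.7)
The plan is to unravel both sides of the claimed equality directly from the fibre-product definition and observe that reparametrising by $\Psi$ simultaneously on $Y$ and on $A(X)$ changes the diagonal action only by a bijective relabelling of $\TT^n$, so the fixed-point subalgebras agree, and then to check that the residual $\TT^n$-action on the fixed points matches the one on $(Y*A)_{\Psi}(X)$.

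More precisely, recall that for a NCP $\TT^n$-bundle $(A(X),\alpha)$ and a principal $\TT^n$-bundle $q:Y\to X$, the algebra $Y*A(X)$ is
$$
Y*A(X)=\bigl(C_0(Y)\otimes_{q}A(X)\bigr)^{\TT^n},
$$
with $\TT^n$ acting diagonally by $g\cdot(\varphi\otimes a)=g^{-1}(\varphi)\otimes\alpha_g(a)$, and the residual $\TT^n$-action on the fixed-point algebra given by $(Y*\alpha)_g(\varphi\otimes a)=g(\varphi)\otimes a=\varphi\otimes\alpha_g(a)$. For $Y_\Psi*A_\Psi(X)$, the action of $\TT^n$ on $Y$ is $g\cdot_\Psi y=\Psi^{-1}(g)\cdot y$ (so $g\cdot_\Psi \varphi=\Psi^{-1}(g)(\varphi)$ in $C_0(Y)$), and the action on $A(X)$ is $\alpha^\Psi_g=\alpha_{\Psi^{-1}(g)}$.

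The main step is now a direct substitution. Writing the diagonal action for $Y_\Psi*A_\Psi$,
$$
g\cdot_{\mathrm{diag}}^{\Psi}(\varphi\otimes a)
=(\Psi^{-1}(g))^{-1}(\varphi)\otimes\alpha_{\Psi^{-1}(g)}(a),
$$
which, after setting $h=\Psi^{-1}(g)$, is $h^{-1}(\varphi)\otimes\alpha_h(a)$. Since $\Psi$ is an automorphism of $\TT^n$, the set of elements fixed by all $g\in\TT^n$ coincides with the set fixed by all $h\in\TT^n$, so
$$
\bigl(C_0(Y)\otimes_{q}A(X)\bigr)^{\TT^n,\,\mathrm{diag}^\Psi}
=\bigl(C_0(Y)\otimes_{q}A(X)\bigr)^{\TT^n,\,\mathrm{diag}}.
$$
Thus $Y_\Psi*A_\Psi(X)$ and $Y*A(X)$ have the same underlying $C_0(X)$-algebra. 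Finally, the residual action on $Y_\Psi*A_\Psi(X)$ reads
$$
(Y_\Psi*\alpha^\Psi)_g(\varphi\otimes a)=g\cdot_\Psi(\varphi)\otimes a=\Psi^{-1}(g)(\varphi)\otimes a=(Y*\alpha)_{\Psi^{-1}(g)}(\varphi\otimes a),
$$
so $(Y_\Psi*\alpha^\Psi)=(Y*\alpha)\circ\Psi^{-1}$, which by definition is precisely the $\TT^n$-action on $(Y*A)_\Psi(X)$. This yields the claimed equality of NCP $\TT^n$-bundles over $X$.

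There is no real obstacle beyond bookkeeping: the only place one must be attentive is that the twist by $\Psi$ is applied to the $\TT^n$ acting on $Y$ and to the $\TT^n$ acting on $A$ in the same way, so that the changes cancel inside the diagonal action and reappear coherently in the residual action. Once this is tracked carefully the proof reduces to the substitution $h=\Psi^{-1}(g)$, so the lemma is essentially a formal consequence of the fibre-product construction.
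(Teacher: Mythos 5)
Your proof is correct and follows essentially the same route as the paper, which likewise observes that the fixed-point algebra of the diagonal action is unchanged when both factor actions are composed with the same automorphism of $\TT^n$, so that the identity map gives the identification. Your write-up merely spells out the substitution $h=\Psi^{-1}(g)$ and the check that the residual action becomes $(Y*\alpha)\circ\Psi^{-1}$, both of which the paper leaves implicit.
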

\begin{proof}
Recall that $Y*A(X)$ is the fixed-point algebra of $C_0(Y)\otimes_qA(X)$ under the diagonal
action of $\TT_n$ (taking the inverse action on the first factor). It is immediately clear that this 
algebra does not change if we compose both actions with any fixed automorphism of $\TT^n$.
Thus we see that the identity  map gives the desired identifications.
\end{proof}

Recall that Theorem \ref{thm-T-bundle} provides a classification of NCP $\TT^n$-bundles
by pairs   $([q:Y\to  X], f)$, where $q:Y\to X$ is a (commutative) principal $\TT^n$-bundle and 
$f:X\to \TT^{\frac{n(n-1)}{2}}$ is a continuous map. We are now able to describe the change 
in these data if we pass from $A(X)$ to $A_{\Psi}(X)$ for $\Psi\in GL_n(\ZZ)$.

\begin{thm}\label{thm-T-bundle-action}
Let  $A(X)$ be a NCP $\TT^n$-bundle over a  second countable locally compact space $X$ with 
classifying data  $([q:Y\to  X], f)$ as in Theorem \ref{thm-T-bundle}. 
 Then  $A_{\Psi}(X)$ is classified by the data 
 $([q_{\Psi}:Y_{\Psi}\to X], \Lambda_2\Psi\circ f)$ for all $\Psi\in GL_n(\ZZ)$.
 More precisely, if $A(X)=Y*(f^*(C^*(H_n))(X)$, then $A_{\Psi}(X)=Y_{\Psi}*((\Lambda_2\Psi\circ f)^*(C^*(H)))(X).$
 \end{thm}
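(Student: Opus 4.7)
The plan is to reduce the claim to the two ingredients already established: Lemma \ref{lem-action} (which handles the principal-bundle factor $Y$) and Corollary \ref{cor-twist-heis} (which handles the universal fibre $C^*(H_n)$). The only gluing step required is functoriality of pull-backs along continuous maps between base spaces.

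First, I would apply Lemma \ref{lem-action} to $A(X)=Y*f^*(C^*(H_n))(X)$ to get
\[
A_\Psi(X)=\bigl(Y*f^*(C^*(H_n))(X)\bigr)_\Psi = Y_\Psi * \bigl(f^*(C^*(H_n))(X)\bigr)_\Psi ,
\]
so the problem is reduced to showing the $\TT^n$-equivariant isomorphism of $C_0(X)$-algebras
\[
\bigl(f^*(C^*(H_n))(X)\bigr)_\Psi \;\cong\; (\Lambda_2\Psi\circ f)^*\bigl(C^*(H_n)\bigr)(X).
\]

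Next, I would invoke Corollary \ref{cor-twist-heis}, which provides a $\TT^n$-equivariant isomorphism of $C(\TT^{\frac{n(n-1)}{2}})$-algebras
\[
C^*(H_n)_\Psi\bigl(\TT^{\frac{n(n-1)}{2}}\bigr) \;\cong\; (\Lambda_2\Psi)^* C^*(H_n)\bigl(\TT^{\frac{n(n-1)}{2}}\bigr).
\]
Pulling this isomorphism back along $f\colon X\to\TT^{\frac{n(n-1)}{2}}$ using the tensor-product definition $f^*B=C_0(X)\otimes_{C_0(\TT^{n(n-1)/2})}B$ preserves both the $C_0(X)$-bundle structure and the (fibre-wise) $\TT^n$-action, since the $\TT^n$-action acts trivially on the $C_0(X)$ factor; hence
\[
\bigl(f^*C^*(H_n)\bigr)_\Psi(X) \;\cong\; f^*\bigl(C^*(H_n)_\Psi\bigr)(X) \;\cong\; f^*\bigl((\Lambda_2\Psi)^* C^*(H_n)\bigr)(X).
\]
Finally, the standard functoriality of pull-backs in the category of C*-algebra bundles (composition of the balanced tensor products) yields
\[
f^*\bigl((\Lambda_2\Psi)^* C^*(H_n)\bigr)(X) \;\cong\; (\Lambda_2\Psi\circ f)^*\bigl(C^*(H_n)\bigr)(X),
\]
again in a $\TT^n$-equivariant way. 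Chaining these isomorphisms with the first displayed equation gives the asserted description of $A_\Psi(X)$.

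The classification of the resulting bundle by the pair $([q_\Psi:Y_\Psi\to X],\,\Lambda_2\Psi\circ f)$ then follows directly from Theorem \ref{thm-T-bundle}, since the construction $Y_\Psi * (\Lambda_2\Psi\circ f)^*(C^*(H_n))(X)$ is precisely the canonical representative attached to these data. I do not expect a real obstacle beyond bookkeeping with the $\TT^n$-actions; the only point one must be a little careful about is checking that the isomorphism of Corollary \ref{cor-twist-heis} survives pull-back along $f$ as a $\TT^n$-equivariant isomorphism, which is immediate because $f$ is a map of base spaces (not involving $\TT^n$) and the $\TT^n$-action is fibre-wise.
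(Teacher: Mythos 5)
Your proposal is correct and follows essentially the same route as the paper's own proof: both reduce the statement to Lemma \ref{lem-action} for the $Y$-factor and to Corollary \ref{cor-twist-heis} pulled back along $f$ via the balanced-tensor-product description and functoriality of pull-backs. The only difference is the order in which the two ingredients are invoked, which is immaterial.
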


\begin{proof} Since $A(X)$ is classified by the pair $([q:Y\to  X], f)$, it follows from 
Theorem \ref{thm-T-bundle} that $A(X)$ is $X\times \TT^n$-equivariantly Morita equivalent to 
$Y*(f^*C^*(H_n))(X)$. Composing all $\TT^n$-actions  by $\Psi^{-1}$  we 
observe that
\begin{align*}
f^*C^*(H_n)_{\Psi}(X)&=C_0(X)\otimes_f C^*(H_n)_{\Psi}\big(\TT^{\frac{n(n-1)}{2}}\big)\\
&\stackrel{(*)}{=}C_0(X)\otimes_f\big((\Lambda_2\Psi)^*C^*(H_n)\big)\big(\TT^{\frac{n(n-1)}{2}}\big)\\
&=f^*\big((\Lambda_2\Psi)^*C^*(H_n)\big)(X)\\
&=(\Lambda_2\Psi\circ f)^*C^*(H_n)(X),
\end{align*}
where $(*)$ follows from Corollary \ref{cor-twist-heis}. The proof then follows from 
Lemma \ref{lem-action}.
\end{proof}

\section{Local $\RKK$-triviality of NCP torus bundles}\label{sec-loctriv}

In this section we want to show, among other useful results, that all 
NCP torus bundles are locally $\RKK$-trivial. We refer to \cite{Kas1} for the 
definition of the $\RKK$-group $\RKK(X; A(X), B(X))$ for two C*-algebra bundles 
$A(X)$ and $B(X)$. We only remark here that the cycles are given by 
the usual cycles $(E,\phi,T)$ for Kasparov's bivariant $K$-theory group $\KK(A(X), B(X))$ 
with the single additional requirement that the left 
$C_0(X)$-actions on the Hilbert bimodule $E$ induced via the left 
action of $A(X)$ on $E$ coincides with the right $C_0(X)$-action on 
$E$ induced by the right action of $B(X)$ on $E$. We start with

\begin{prop}\label{prop-convex}
Let $\Delta$ be a contractible and compact space and let $A(\Delta\times X)$
be a  NCP $\TT^n$-bundle over $\Delta\times X$. Then for any element $z$ of
$\Delta$, the evaluation map 
$$e_z:A(\Delta\times X)\to
A(\{z\}\times X)$$
gives an invertible class 
$[e_z]\in \RKK(X;A(\Delta\times X),A(\{z\}\times X))$.
\end{prop}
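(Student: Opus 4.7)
The approach is to use the contractibility of $\Delta$ to construct, via Lemma \ref{lem-pull2}, a $C[0,1]$-algebra whose endpoint fibres are $A(\Delta\times X)$ and the ``trivial-in-$\Delta$'' bundle $C(\Delta)\otimes_X A(\{z\}\times X)$, and then to extract the desired $\RKK_X$-equivalence through standard homotopy invariance.

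Fix a contraction $H:\Delta\times[0,1]\to\Delta$ with $H(\cdot,0)=\mathrm{id}_\Delta$ and $H(\cdot,1)\equiv z$, and define $\tilde H:(\Delta\times X)\times[0,1]\to\Delta\times X$ by $\tilde H(y,x,t)=(H(y,t),x)$. Because $\Delta$ is compact, $\tilde H$ is proper, so by Lemma \ref{lem-pull2} the pull-back $B:=\tilde H^*A$ is a C*-algebra bundle over $(\Delta\times X)\times[0,1]$ equipped with a canonical $*$-homomorphism $\Phi_{\tilde H}:A(\Delta\times X)\to B$. A direct computation identifies the endpoint fibres of $B$ as $C_0(\Delta\times X)$-algebras: $B|_{t=0}=A(\Delta\times X)$ with $\ev_0\circ\Phi_{\tilde H}=\mathrm{id}$, and $B|_{t=1}=\pi^*A(\{z\}\times X)\cong C(\Delta)\otimes_X A(\{z\}\times X)$ with $\ev_1\circ\Phi_{\tilde H}=\iota\circ e_z$, where $\pi:\Delta\times X\to X$ is the projection and $\iota$ is the inclusion as $\Delta$-constant sections.

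Two facts would then finish the proof: (a) the inclusion $\iota$ is an $\RKK_X$-equivalence, and (b) both endpoint evaluations $\ev_0,\ev_1:B\to B|_{t=0,1}$ are $\RKK_X$-equivalences. Granting these, $\Phi_{\tilde H}$ is an $\RKK_X$-equivalence (as a splitting of the $\RKK_X$-equivalence $\ev_0$), hence so is the composition $\ev_1\circ\Phi_{\tilde H}=\iota\circ e_z$, and finally $e_z=\iota^{-1}\cdot(\iota\circ e_z)$ is an $\RKK_X$-equivalence. Fact (a) is easy: for any $y_0\in\Delta$, $\ev_{y_0}$ is a left inverse of $\iota$, and the opposite composition $\iota\circ\ev_{y_0}$ is homotopic to $\mathrm{id}_{C(\Delta)\otimes_X A_z}$ via the $C_0(X)$-linear family of $*$-homomorphisms $(f\otimes a)\mapsto(f\circ H(\cdot,t)\otimes a)$ produced by the contraction.

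Fact (b) is the main obstacle. The plan is to show that the ideal $\ker\ev_0=B|_{(\Delta\times X)\times(0,1]}$ is $\KK$-contractible as a $C_0(X)$-algebra, which by the six-term exact sequence forces $\ev_0$ to be an $\RKK_X$-equivalence (and analogously for $\ev_1$). For this, one applies Lemma \ref{lem-pull2} once more to the proper scaling map
$$(y,x,t,s)\mapsto(y,x,st):(\Delta\times X)\times[0,1]^2\to(\Delta\times X)\times[0,1],$$
yielding a continuous deformation of $B$ into the trivial bundle $C([0,1])\otimes A(\Delta\times X)$; restricting the $t$-coordinate to $(0,1]$ gives a homotopy between $\ker\ev_0$ and $C_0((0,1])\otimes A(\Delta\times X)$, and the latter is $\KK$-contractible since $C_0((0,1])$ is. Making this last homotopy-invariance step rigorous in the $C_0(X)$-linear $\RKK$ setting is the most delicate point, and is where the regularity provided by the NCP hypothesis on $A(\Delta\times X)$ must be invoked to ensure that the $C[0,1]$-algebra structure of $B$ is well-behaved enough for the deformation argument to yield a genuine $\KK$-equivalence.
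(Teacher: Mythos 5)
There is a genuine gap, and it sits exactly where you acknowledge the argument is ``delicate'': your Fact (b). The claim that the endpoint evaluations $\ev_0,\ev_1\colon B\to B|_{t=0,1}$ of the interpolating $C[0,1]$-algebra are $\RKK(X;\cdot,\cdot)$-equivalences is not an auxiliary technicality --- it \emph{is} the proposition, in the special case $\Delta=[0,1]$ (compare Corollary \ref{cor-homotop}, which the paper derives \emph{from} Proposition \ref{prop-convex}). For a general $C[0,1]$-algebra this claim is simply false: e.g.\ $B=\{f\in C([0,1],M_2): f(0)\ \text{diagonal}\}$ is a unital continuous field over $[0,1]$ for which $\ev_1$ is not a $\KK$-equivalence ($K_0(B)\cong\ZZ^2$ while $K_0(M_2)\cong\ZZ$). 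Your proposed remedy --- the scaling map $(y,x,t,s)\mapsto(y,x,st)$ producing a ``continuous deformation'' of $\ker\ev_0$ into $C_0((0,1])\otimes A(\Delta\times X)$ --- is circular: to conclude $\KK$-contractibility of $\ker\ev_0$ from the existence of such a deformation you need to know that evaluation at an endpoint of \emph{that} bundle over the $s$-interval is a $\KK$-equivalence, which is again the statement being proved. A homotopy of $*$-homomorphisms gives equality of $\KK$-classes (this is why your Fact (a) is fine), but a ``homotopy of C*-bundles'' carries no K-theoretic consequence without additional input. You correctly note that the NCP hypothesis must be invoked here, but you never say how, and that is where all the content lies.

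For comparison, the paper makes the NCP hypothesis do real work through a completely different chain: by Proposition \ref{prop-dualbundle} one may take $A(\Delta\times X)=C_0(\Delta\times X,\K)\rtimes_\alpha\ZZ^n$; Green's imprimitivity theorem for $\Ind_{\ZZ^n}^{\RR^n}$ together with the $\RKK$-strengthened Connes--Thom isomorphism (Theorem \ref{thm-Thom}, built from Kasparov's Dirac/dual-Dirac elements) shows this is $\RKK(\Delta\times X;\cdot,\cdot)$-equivalent, up to a dimension shift, to a continuous-trace algebra over $\Delta\times X\times\TT^n$ (Lemma \ref{lem-nctorusfibration}); contractibility and compactness of $\Delta$ then force the Dixmier--Douady class to be pulled back from $X\times\TT^n$, so the continuous-trace algebra is literally $C(\Delta,D(X\times\TT^n))$; and for a bundle that is genuinely constant in the $\Delta$-direction, evaluation at $z$ is a homotopy equivalence of C*-algebras over $X$ (your Fact (a)), hence an $\RKK(X;\cdot,\cdot)$-equivalence. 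If you want to salvage your more direct approach, you would have to prove your Fact (b) by first passing through some such rigidification of the bundle $B$; as written, the proposal defers the entire difficulty to an unproven --- and in general false --- homotopy-invariance principle.
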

Before proving this proposition, we state an immediate corollary.

\begin{cor}\label{cor-homotop}
Assume that two NCP $\TT^n$-bundles $A_0(X)$ and $A_1(X)$ 
are homotopic in the sense that 
there exists a NCP $\TT^n$-bundle $A([0,1]\times X)$ which restricts 
to $A_0(X)$ and $A_1(X)$ at $0$ and $1$, respectively.
Then $A_0(X)$ and $A_1(X)$ are $\RKK$-equivalent.
\end{cor}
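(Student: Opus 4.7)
The proof is essentially a direct unwinding of Proposition \ref{prop-convex} applied with $\Delta = [0,1]$, which is both compact and contractible. The plan is as follows. By hypothesis, one has a NCP $\TT^n$-bundle $A([0,1]\times X)$ whose restrictions $A(\{0\}\times X)$ and $A(\{1\}\times X)$ are identified with $A_0(X)$ and $A_1(X)$ respectively. Applying Proposition \ref{prop-convex} at the endpoints $z=0$ and $z=1$ produces evaluation morphisms whose $\RKK$-classes
$$[e_0]\in \RKK(X; A([0,1]\times X), A_0(X)),\qquad [e_1]\in \RKK(X; A([0,1]\times X), A_1(X))$$
are both invertible.

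To obtain the desired equivalence, I would form the Kasparov product
$$[e_0]^{-1}\otimes_{A([0,1]\times X)}[e_1]\in \RKK(X; A_0(X), A_1(X)).$$
Since both factors are invertible in $\RKK_X$, so is their product, with inverse $[e_1]^{-1}\otimes_{A([0,1]\times X)}[e_0]$. This class witnesses an $\RKK(X;\cdot,\cdot)$-equivalence between $A_0(X)$ and $A_1(X)$.

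There is really no obstacle here: all of the substantive work has already been absorbed into Proposition \ref{prop-convex}, and the corollary amounts to the formal observation that $\RKK$-equivalence is an equivalence relation, so that inverting one invertible class and composing with another yields an invertible class between the two fibres one wishes to compare. The only point worth remarking is that the Kasparov product is taken over the base $X$, which is standard for $\RKK_X$ and preserves invertibility.
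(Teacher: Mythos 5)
Your proof is correct and is exactly the argument the paper intends: the paper states this as an ``immediate corollary'' of Proposition \ref{prop-convex} without writing out a proof, and your composition $[e_0]^{-1}\otimes_{A([0,1]\times X)}[e_1]$ of the two invertible evaluation classes at the endpoints of $[0,1]$ is the standard way to make that immediacy explicit.
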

In view of the classification results of the previous section,
we then get 

\begin{cor}\label{cor-homotopic}
Assume that $f,g:X\to \TT^\frac{n(n-1)}{2}$ are two homotopic continuous functions.
Then $f^*C^*(H_n)(X)$ and $g^*C^*(H_n)(X)$ are $\RKK$-equivalent.
\end{cor}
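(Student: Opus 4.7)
The plan is to construct a homotopy of NCP $\TT^n$-bundles over $X$ interpolating between $f^*C^*(H_n)(X)$ and $g^*C^*(H_n)(X)$, and then invoke Corollary \ref{cor-homotop}. Concretely, let $H:[0,1]\times X\to\TT^{\frac{n(n-1)}{2}}$ be a continuous homotopy with $H(0,\cdot)=f$ and $H(1,\cdot)=g$. I would form the pull-back
$$A([0,1]\times X):=H^*C^*(H_n)\bigl([0,1]\times X\bigr),$$
viewed as a C*-algebra bundle over $[0,1]\times X$, and endow it with the $\TT^n$-action inherited from the dual $\TT^n$-action $\beta^n$ on $C^*(H_n)$.

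Next, I would verify that this pull-back is an NCP $\TT^n$-bundle over $[0,1]\times X$. Since $\beta^n$ is fibre-wise over $\TT^{\frac{n(n-1)}{2}}$, its inflation to $H^*C^*(H_n)$ is fibre-wise over $[0,1]\times X$. The key computation is that pull-back along a continuous map commutes with crossed products by fibre-wise actions, so that
$$H^*C^*(H_n)\rtimes_{\beta^n}\TT^n\;\cong\;H^*\bigl(C^*(H_n)\rtimes_{\beta^n}\TT^n\bigr)\;\cong\;H^*C\bigl(\TT^{\frac{n(n-1)}{2}},\K\bigr)\;\cong\;C_0\bigl([0,1]\times X,\K\bigr),$$
as C*-algebra bundles over $[0,1]\times X$, using Theorem \ref{thm-T-bundle} (or directly the fact that $C^*(H_n)$ is an NCP $\TT^n$-bundle over $\TT^{\frac{n(n-1)}{2}}$). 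Hence $A([0,1]\times X)$ satisfies Definition \ref{defn-nctorus}.

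By functoriality of pull-back (Lemma \ref{lem-pull2} applied to the inclusions $\{0\}\times X\hookrightarrow [0,1]\times X$ and $\{1\}\times X\hookrightarrow [0,1]\times X$), the restrictions are
$$A(\{0\}\times X)=f^*C^*(H_n)(X)\quad\text{and}\quad A(\{1\}\times X)=g^*C^*(H_n)(X).$$
Identifying $\{i\}\times X$ with $X$ in the obvious way, this exhibits $f^*C^*(H_n)(X)$ and $g^*C^*(H_n)(X)$ as the endpoints of a homotopy of NCP $\TT^n$-bundles over $X$, so Corollary \ref{cor-homotop} yields the desired $\RKK$-equivalence.

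The only substantive point is the commutation of pull-back with crossed products for fibre-wise actions (step two); everything else is formal bookkeeping. This should be standard, but it is where one has to be careful that the $C_0(X)$-bimodule structures match up correctly and that no separability or exactness issues arise when forming the balanced tensor product $C_0([0,1]\times X)\otimes_{C(\TT^{n(n-1)/2})}C^*(H_n)$.
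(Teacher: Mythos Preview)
Your proof is correct and follows exactly the same route as the paper: form the pull-back $F^*C^*(H_n)([0,1]\times X)$ along a homotopy $F$ and apply Corollary~\ref{cor-homotop}. The paper's proof is a single line to this effect; your additional verification that the pull-back is again an NCP $\TT^n$-bundle (via commutation of pull-back with the crossed product) and that its restrictions at $0$ and $1$ are the expected bundles simply spells out what the paper leaves implicit.
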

\begin{proof} Apply the previous corollary to $F^*(C^*(H_n))([0,1]\times X)$
where $F:[0,1]\times X\to \TT^\frac{n(n-1)}{2}$ is a homotopy between 
$f$ and $g$.
\end{proof}

The above corollary then easily implies

\begin{cor}\label{cor-localRKK-triv}
Suppose that $A(X)$ is a NCP $\TT^n$-bundle. Then $A(X)$ is locally 
$\RKK$-trivial, i.e., for every $x\in X$ there exists an open  neighbourhood 
$U$ of $X$ such that $A(U)$ is $\RKK(U,\cdot,\cdot)$-equivalent to
$C_0(U, A_x)$.
\end{cor}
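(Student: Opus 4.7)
The plan is to reduce to the universal model of Theorem \ref{thm-T-bundle} and then exploit local contractibility of $\TT^{\frac{n(n-1)}{2}}$ through Corollary \ref{cor-homotopic}. First I would invoke Theorem \ref{thm-T-bundle} to replace $A(X)$, up to $\TT^n$-equivariant Morita equivalence and hence up to $\RKK(X;\cdot,\cdot)$-equivalence, with a model $Y*f^*(C^*(H_n))(X)$ for some principal $\TT^n$-bundle $q:Y\to X$ and continuous classifying map $f:X\to\TT^{\frac{n(n-1)}{2}}$. Since restriction to an open set commutes with pull-backs and fibre products, it is enough to build the required $\RKK(U;\cdot,\cdot)$-equivalence for the restriction $Y|_U * (f|_U)^*(C^*(H_n))(U)$.

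Given $x\in X$, I would then shrink $U$ until two local trivialisations hold simultaneously: (a) $Y|_U\cong U\times\TT^n$ as a principal $\TT^n$-bundle, by local triviality of ordinary torus bundles; and (b) $f(U)$ is contained in an open contractible neighbourhood $V$ of $f(x)$ in $\TT^{\frac{n(n-1)}{2}}$, by continuity of $f$ and local contractibility of the torus. Condition (b) makes $f|_U$ homotopic through $V$ to the constant map $c_{f(x)}$, so Corollary \ref{cor-homotopic} (whose proof goes through verbatim with $U$ in place of $X$, since the compactness hypothesis in Proposition \ref{prop-convex} concerns the homotopy parameter $\Delta$ rather than the base) furnishes an $\RKK(U;\cdot,\cdot)$-equivalence between $(f|_U)^*(C^*(H_n))(U)$ and $c_{f(x)}^*(C^*(H_n))(U) = C_0(U, C^*(H_n)_{f(x)})$. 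Condition (a) collapses the fibre product: $Y|_U*B(U)\cong B(U)$ for every $\TT^n$-bundle $B(U)$ over $U$, via the global section $u\mapsto(u,1)$ of the trivialised principal bundle (fibrewise, $q^{-1}(u)\times_{\TT^n} B_u$ is canonically $B_u$). Combining these steps, $A(U)$ is $\RKK(U;\cdot,\cdot)$-equivalent to $C_0(U, C^*(H_n)_{f(x)})$.

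It remains to replace $C^*(H_n)_{f(x)}$ by $A_x$ on the right-hand side. This is automatic: $A_x$ and $C^*(H_n)_{f(x)}$ are the fibres at $x$ of the $C_0(X)$-linearly Morita equivalent bundles $A(X)$ and $Y*f^*(C^*(H_n))(X)$, hence are themselves Morita equivalent, and inflating the bimodule trivially over $U$ gives a $C_0(U)$-linear Morita equivalence $C_0(U, A_x)\sim_M C_0(U, C^*(H_n)_{f(x)})$. I do not anticipate any real obstacle; the only substantive input is Corollary \ref{cor-homotopic}, and the rest is routine assembly of local trivialisations.
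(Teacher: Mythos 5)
Your proof is correct and follows essentially the same route as the paper's: reduce to the model $Y*f^*(C^*(H_n))(X)$ via Theorem \ref{thm-T-bundle}, shrink $U$ so that $Y|_U$ trivialises and $f|_U$ becomes null-homotopic, and conclude with Corollary \ref{cor-homotopic}. The paper's own proof is just a terser version of this; your additional care about collapsing the fibre product over a trivialising neighbourhood and about passing from $C^*(H_n)_{f(x)}$ back to $A_x$ via the fibrewise Morita equivalence fills in details the paper leaves implicit.
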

\begin{proof} Let $x\in X$ be fixed. By the classification results of the previous 
section we may assume that $A(X)=Y*(f^*(C^*(H_n)))$. But restricting to a small 
neighbourhood $U$ of $x$ we may assume that $Y$ is the trivial $\TT^n$-bundle 
and that $f$ is homotopic to the constant map with value $f(x)$. 
The result then follows from the previous corollary.
\end{proof}

For the proof of the proposition
we need the following strong version of the Thom-Connes isomorphism for
crossed products by $\RR^n$, which, as we point out below, 
 is due to Kasparov:

\begin{thm}\label{thm-Thom}
Suppose that $A(X)$ is a C*-algebra bundle and that $\alpha:\RR^n\to\Aut(A(X))$
is a fibre-wise action on $A(X)$. Then there exists 
an invertible class $$\frak t\in \RKK_n(X; A(X), A(X)\rtimes \RR^n),$$ i.e., the 
algebra $A(X)$ is $\RKK$-equivalent to $A(X)\rtimes \RR^n$ up to a dimension shift 
of order $n$ (mod $2$).
\end{thm}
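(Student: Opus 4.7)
Plan: The proof proceeds by induction on $n$. The base case $n=1$ is essentially the Connes--Thom isomorphism upgraded to the $\RKK_X$-category, which is Kasparov's contribution in his equivariant KK-formalism.

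For the inductive step, write $\RR^n \cong \RR^{n-1} \times \RR$ and use the iterated crossed product isomorphism
\[
A(X) \rtimes \RR^n \;\cong\; (A(X) \rtimes \RR^{n-1}) \rtimes \RR.
\]
Because the original action $\alpha$ is $C_0(X)$-linear, both the restriction of $\alpha$ to $\RR^{n-1}$ and the induced $\RR$-action on $A(X)\rtimes\RR^{n-1}$ (coming from the remaining $\RR$-factor) are again fibre-wise. By the inductive hypothesis one obtains invertible classes
\[
\frak t' \in \RKK_{n-1}(X;\, A(X),\, A(X)\rtimes\RR^{n-1}), \qquad \frak t'' \in \RKK_1(X;\, A(X)\rtimes\RR^{n-1},\, A(X)\rtimes\RR^n),
\]
and the Kasparov product over $X$, $\frak t := \frak t' \otimes_{A(X)\rtimes\RR^{n-1}} \frak t''$, gives an invertible element in $\RKK_n(X;\, A(X),\, A(X)\rtimes\RR^n)$, since the Kasparov product of invertibles is invertible and dimensions add modulo $2$.

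For the base case $n=1$, the plan is to adapt Kasparov's equivariant construction of the Thom element. The starting inputs are the Bott and Dirac classes
\[
b \in \KK^{\RR}_1(\CC, C_0(\RR)), \qquad d \in \KK^{\RR}_1(C_0(\RR), \CC)
\]
for the translation action, which are mutually inverse in $\KK^{\RR}$. Because the action $\alpha$ is fibre-wise, external products with $A(X)$ live naturally in the equivariant $\RKK^\RR_X$-groups, and applying the $\RR$-equivariant descent $j^\RR$ produces a candidate element
\[
\frak t \in \RKK_1(X;\, A(X),\, A(X)\rtimes\RR).
\]
Invertibility is then verified via Takai--Takesaki duality: the Kasparov product of $\frak t$ with the analogously constructed element $\hat{\frak t}$ for the dual action $\hat\alpha$ on $A(X)\rtimes\RR$ corresponds, under the canonical $C_0(X)$-linear isomorphism $(A(X)\rtimes\RR)\rtimes\widehat{\RR} \cong A(X)\otimes\K(L^2(\RR))$, to the identity class in $\RKK_0(X;\, A(X), A(X))$; the reverse composition is handled symmetrically.

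The main obstacle is ensuring the $C_0(X)$-linearity is preserved at every stage: verifying that tensoring the Bott/Dirac cycles with $A(X)$ yields $C_0(X)$-balanced Hilbert bimodules, that the equivariant descent respects the $\RKK_X$-refinement, and that the Takai duality isomorphism is $C_0(X)$-equivariant. These checks are mechanical given that $\alpha$ commutes with the central $C_0(X)$-action, but systematizing them into a coherent proof is precisely the content of Kasparov's equivariant KK-machinery applied fibrewise over $X$, which is why the theorem is credited to him.
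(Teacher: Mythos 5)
Your overall strategy is close to the paper's, which likewise builds $\frak t$ from Kasparov's equivariant Dirac element via $\sigma_{A(X)}$ and descent, followed by the $C_0(X)$-linear Morita equivalence $A(X)\sim_M\big(A(X)\otimes C_0(\RR^n)\big)\rtimes\RR^n$. Two points of divergence. First, the induction on $n$ is harmless but unnecessary: Kasparov constructs the Dirac element $D\in\KK^{\RR^n}_n(C_0(\RR^n),\CC)$ and its dual-Dirac inverse $\eta$ for every $n$ at once, so the paper handles $\RR^n$ in a single step. Second, and more seriously, your verification of invertibility via Takai--Takesaki duality is both harder than necessary and, as written, a genuine gap. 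You already record that $b$ and $d$ are mutually inverse in $\KK^{\RR}$; given that, invertibility of $\frak t$ is automatic, since $\sigma_{A(X)}$, the descent $j^{\RR}$, and composition with a Morita equivalence are compatible with Kasparov products and hence carry inverses to inverses --- this is exactly the paper's argument. If you instead insist on exhibiting the inverse as the dual-action element $\hat{\frak t}$ and checking that $\frak t\otimes\hat{\frak t}$ becomes the identity under $(A(X)\rtimes\RR)\rtimes\widehat{\RR}\cong A(X)\otimes\K(L^2(\RR))$, that identity is the hard analytic content of the Fack--Skandalis proof of the Connes--Thom isomorphism (it requires a rotation argument) and cannot simply be asserted to ``correspond to the identity class.''

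One step you should also make explicit: descent applied to $\sigma_{A(X)}(d)$ lands in $\RKK_1\big(X;\,(A(X)\otimes C_0(\RR))\rtimes_{\alpha\otimes\tau}\RR,\, A(X)\rtimes\RR\big)$, so to obtain a class with source $A(X)$ you must identify $(A(X)\otimes C_0(\RR))\rtimes_{\alpha\otimes\tau}\RR$ with $A(X)\otimes\K(L^2(\RR))$ as a $C_0(X)$-algebra. The paper does this with the untwisting isomorphism $\big(\varphi(g)\big)(s)=\alpha_{-s}(g(s))$ of $C_0(\RR^n,A(X))$, which conjugates $\alpha\otimes\tau$ into $\id\otimes\tau$, and then uses $C_0(\RR^n)\rtimes_\tau\RR^n\cong\K(L^2(\RR^n))$. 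Your phrase ``applying the descent produces a candidate element in $\RKK_1(X;A(X),A(X)\rtimes\RR)$'' silently absorbs this identification; it is routine, but it is where the fibre-wise hypothesis on $\alpha$ does concrete work.
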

\begin{proof} 
We use the  Dirac element $D\in \KK^{\RR^n}_n(C_0(\RR^n),\CC)$ as constructed 
by Kasparov in \cite[\S 4]{Kas1}, where $\RR^n$ acts on itself by the translation action $\tau$
(a different, and probably easier description is given in \cite{Kas0}). 
Since $\RR^n$ is amenable,  the element $D$ is invertible 
with inverse given by the dual-Dirac element $\eta\in \KK^{\RR^n}_n(\CC, C_0(\RR^n))$
also constructed in \cite[\S 4]{Kas1}.
We then obtain an invertible element 
$$\sigma_{A(X)}(D)\in \RKK^{\RR^n}_n(X; A(X)\otimes C_0(\RR^n), A(X))$$
by tensoring $D$ with $A(X)$. 
Using the descent, we obtain an invertible element 
$$j_{\RR^n}(\sigma_{A(X)}(D))
\in \RKK^{\RR^n}_n(X; \big(A(X)\otimes C_0(\RR^n)\big)\rtimes \RR^n, A(X)\rtimes \RR^n).$$
But the crossed product  $\big(A(X)\otimes C_0(\RR^n)\big)\rtimes \RR^n$ is 
$C_0(X)$-linearly isomorphic to $$A(X)\otimes \big(C_0(\RR^n)\rtimes\RR^n\big)\cong A(X)\otimes
\K(L^2(\RR^n)),$$ where the first isomorphism follows by first identifying 
$A(X)\otimes C_0(\RR^n)$ with $C_0(\RR^n, A(X))$ and then observing that
$$\varphi: C_0(\RR^n, A(X))\to C_0(\RR^n, A(X));\big( \varphi(g)\big)(s)=\alpha_{-s}(g(s))$$
is a bundle isomorphism which 
 transforms the diagonal action $\alpha\otimes \tau$ to $\id\otimes \tau$
on $A(X)\otimes C_0(\RR^n)$. We then use the general fact that
$(A\otimes_{\max} B)\rtimes_{\id\otimes \beta} G\cong A\otimes_{\max}(B\rtimes _\beta G)$
together with the fact that
 $C_0(\RR^n)\rtimes_{\tau}\RR^n\cong \K(L^2(\RR^n))$.
 We thus obtain a $C_0(X)$-linear Morita equivalence
 $A(X)\sim_M\big(A(X)\otimes C_0(\RR^n)\big)\rtimes \RR^n$. Multiplying 
 $j_{\RR^n}(\sigma_{A(X)}(D))$ from the right with this Morita equivalence gives 
the desired  invertible element $\frak t\in \RKK(X; A(X); A(X)\rtimes\RR^n)$.
 \end{proof}

As a consequence of the above theorem, we now obtain the following lemma:

\begin{lem}\label{lem-nctorusfibration}
Suppose that $\alpha:\ZZ^n\to \Aut(C_0(X,\K))$ is any fibre-wise action.
Then $A(X):=C_0(X,\K)\rtimes \ZZ^n$ is, up to a dimension shift of order $n$, 
$\RKK(X;\cdot,\cdot)$-equivalent to a continuous trace algebra $B=B(X\times \TT^n)$ with base 
$X\times \TT^n$.
\end{lem}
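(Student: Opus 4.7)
The plan is to realize $A(X)=C_0(X,\K)\rtimes_\alpha\ZZ^n$ as (up to Morita equivalence over $X$) a crossed product of a continuous-trace algebra by a fibre-wise $\RR^n$-action, and then apply the strong Connes--Thom isomorphism of Theorem \ref{thm-Thom}.

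\smallskip

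First I would form the induced $\RR^n$-algebra
$$B:=\Ind_{\ZZ^n}^{\RR^n}\bigl(C_0(X,\K)\bigr)=
\bigl\{F\in C_b(\RR^n,C_0(X,\K)):F(t+k)=\alpha_{-k}(F(t))\text{ for }k\in\ZZ^n,\ \|F(\cdot)(x)\|\in C_0(\RR^n/\ZZ^n\times X)\bigr\},$$
equipped with the translation action $\tau$ of $\RR^n$. Since $\alpha$ is fibre-wise over $X$, the $C_0(X)$-structure on $C_0(X,\K)$ passes to $B$ (pointwise multiplication in the $\RR^n$ variable), and in fact $B$ carries a canonical $C_0(X\times\TT^n)$-structure identifying it as a C*-algebra bundle over $X\times\TT^n$ whose fibre at $(x,[t])$ is the copy of $\K$ sitting over the $\ZZ^n$-orbit through $(x,t)$. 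In particular $B$ is a (stable) continuous-trace algebra with spectrum $X\times\TT^n$, i.e.\ of the form $B(X\times\TT^n)$ in the notation of the lemma. The $\RR^n$-action $\tau$ is fibre-wise over $X$ (translation only moves the $\RR^n$-variable, not the base point of $X$).

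\smallskip

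Next I would invoke Green's symmetric imprimitivity theorem (equivalently, the induced-algebra version in \cite{EW1,EW2}) to get a $C_0(X)$-linear Morita equivalence
$$B\rtimes_\tau\RR^n \sim_M C_0(X,\K)\rtimes_\alpha\ZZ^n = A(X),$$
which is even $C_0(X)$-equivariant. Hence $A(X)$ and $B\rtimes_\tau\RR^n$ define the same object in $\RKK_X$.

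\smallskip

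Finally I would apply Theorem \ref{thm-Thom} to the fibre-wise $\RR^n$-action $\tau$ on the $C_0(X)$-algebra $B$: this yields an invertible class
$$\mathfrak t\in\RKK_n\bigl(X;\,B,\,B\rtimes_\tau\RR^n\bigr),$$
i.e.\ $B$ and $B\rtimes_\tau\RR^n$ are $\RKK(X;\cdot,\cdot)$-equivalent up to a dimension shift of $n$ mod $2$. Composing with the $C_0(X)$-linear Morita equivalence from the previous step produces the desired invertible class in $\RKK_n(X;A(X),B(X\times\TT^n))$.

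\smallskip

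The only real issue is bookkeeping: one must check that Green's Morita equivalence is literally $C_0(X)$-linear (which is automatic because $\alpha$ is fibre-wise, so the $C_0(X)$-action commutes with everything in sight) and that viewing $B$ as a $C_0(X)$-algebra via the projection $X\times\TT^n\to X$ is compatible with the fibre-wise structure needed to apply Theorem \ref{thm-Thom}. Once these compatibilities are noted, the three steps combine without further work.
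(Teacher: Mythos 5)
Your proposal is correct and follows essentially the same route as the paper: form the induced algebra $\Ind_{\ZZ^n}^{\RR^n}C_0(X,\K)$ (a continuous-trace algebra over $X\times\TT^n$ by \cite{RW}), use Green's imprimitivity theorem for a $C_0(X)$-linear Morita equivalence between $A(X)$ and the crossed product of the induced algebra by the translation action of $\RR^n$, and then apply the strong Connes--Thom isomorphism of Theorem \ref{thm-Thom}. The only cosmetic difference is your sign convention in the equivariance condition defining the induced algebra, which is immaterial.
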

\begin{proof}
Consider the induced 
algebra $B:=\Ind_{\ZZ^n}^{\RR^n}C_0(X,\K)$, which can be defined as the set of all
bounded continuous functions $F:\RR^n\times X\to \K$ such that
$$F(s+m, x)=\alpha^x_m(F(s,x))\quad \text{for all}\; s\in \RR^n, m\in \ZZ^n \;\text{and}\; x\in X.$$
It is shown in \cite{RW} that this is a continuous trace algebra with base 
$X\times (\RR^n/\ZZ^n)\cong X\times \TT^n$,
since $\ZZ^n$ acts trivially on $X$.
Moreover, it follows from Green's imprimitivity theorem (e.g. see \cite{Will} for a complete treatment) that
the crossed product $A(X)=C_0(X,\K)\rtimes \ZZ^n$ is
Morita equivalent over $X$ to $\big(\Ind_{\ZZ^n}^{\RR^n}C_0(X,\K)\big)\rtimes \RR^n$, where $\RR^n$ acts  on the induced algebra by translation in the first variable of $F$. 
Using the above strong version of the  Thom isomorphism, we see 
that  $A(X)=C_0(X,\K)\rtimes \ZZ^n$ is
$\RKK(X;\cdot,\cdot)$-equivalent to $B(X\times \TT^n):=\Ind_{\ZZ^n}^{\RR^n}C_0(X,\K)$ 
up to a dimension shift of order $n$.
\end{proof}

We are now ready for

\begin{proof}[Proof of Proposition \ref{prop-convex}]
Let $A(\Delta\times X)=C_0(\Delta\times X,\K)\rtimes_{\alpha}\ZZ^n$ as in 
Proposition \ref{prop-dualbundle}.
It follows then from  Lemma \ref{lem-nctorusfibration} that 
$A(\Delta\times X)=C_0(\Delta\times X,\K)\rtimes_{\alpha}\ZZ^n$ is $\RKK(\Delta\times X; 
\cdot,\cdot)$-equivalent to a continuous trace algebra $B(\Delta\times
X\times \TT^n)$ {with Dixmier-Douady class $\delta$ in $H^3(\Delta\times
X\times \TT^n,\ZZ)$. Since $\Delta$ is
compact and 
contractible,   $\delta=\pi^*(\delta')$, where $\delta'$ lies in
$H^3(X\times\TT^n,\ZZ)$ and $\pi^*$ is the isomorphism induced in
cohomology by the projection  $\pi:\Delta\times
X\times \TT^n\to
X\times\TT^n$.}
{Hence} {this bundle is 
isomorphic  to some algebra of the form $\pi^*D(X\times \TT^n)=C(\Delta, D(X\times \TT^n))$
as C*-algebra bundles over $\Delta\times X\times \TT^n$, where
$D(X\times \TT^n)$  a continuous trace algebra over $X\times \TT^n$.}
Since $\Delta$ is a contractible compact space, the evaluation 
$e_z$ at an element $z$ of $\Delta$ induces
an $\RKK(X;\cdot,\cdot)$-equivalence.
\end{proof}

\section{The $K$-theory group bundle of a NCP torus bundles. }\label{sec-bundle}

Suppose that $X$ is a locally compact space. By an (abelian)
 {\em group bundle} 
 $$\mathcal G:=\{G_x: x\in X\}$$
we understand a family of groups $G_x$, $x\in X$, together with 
group isomorphisms  {$c_{\gamma}:G_x\to G_y$} for each continuous path
$\gamma: [0,1]\to X$ which starts at $x$ and ends at $y$, such that 
 the following 
additional requirements are  satisfied:
\begin{enumerate}
\item If $\gamma$ and $\gamma'$ are homotopic paths from $x$ to $y$, then 
$c_\gamma=c_{\gamma'}$.
\item {If $\gamma_1:[0,1]\to X$ and $\gamma_2:[0,1]\to X$ are paths from $y$ to $z$ 
and from $x$ to $y$, respectively, then 
$$c_{\gamma_1\circ\gamma_2}=c_{\gamma_1}\circ c_{\gamma_2},$$
where $\gamma_1\circ \gamma_2:[0,1]\to X$} is the usual composition of paths.
\end{enumerate}
It follows from the above requirements, that if $X$ is path connected, then all
groups $G_x$ are isomorphic and that we get a canonical action of the fundamental group 
$\pi_1(X)$ on each fibre $G_x$.

A morphism between two group bundles 
$\mathcal G=\{G_x: x\in X\}$ and $\mathcal G' =\{G'_x: x\in X\}$ is a family of group homomorphisms
$\phi_x:G_x\to G'_x$ which commutes with the maps $c_\gamma$. The trivial group bundle
is the bundle with every $G_x$ equal to a fixed group $G$ and all maps $c_{\gamma}$ 
being the identity. We then write $X\times G$ for this bundle.
If $X$ is path connected, then a given group bundle $\mathcal G$ on $X$ 
can be trivialized if and only if the action of $\pi_1(X)$ on the fibres $G_x$ are trivial.
In that case every path $\gamma$ from a base point $x$ to a base point $y$ induces 
the same morphism
$c_{x,y}:G_x\to G_y$ and the family of maps $\{c_{x_0,x}:x\in X\}$ is a group bundle 
isomorphism between the trivial group bundle $X\times G_{x_0}$  and the given 
bundle $\mathcal G=\{G_x:x\in X\}$ if we fix a base point $x_0$.

Suppose now that $A(X)$ is an NCP $\TT^n$-bundle over $X$.
Recall from Proposition \ref{prop-convex} that for any compact contractible 
space $\Delta$ and any map $f:\Delta\to X$
 the evaluation map $\eps_t:f^*A(\Delta)\to
A_{f(t)}$   is a $\KK$-equivalence. This implies in particular, that $A(X)$ is 
$\KK$-fibration  in the sense of the following definition:

 \begin{defn}\label{def-K-fibration}
A C*-algebra bundle $A(X)$ is called a {\em $\KK$-fibration} (resp. {\em $K$-fibration})
if for every compact contractible space $\Delta$ and any $v\in \Delta$ the evaluation 
map $ev_v:f^*A(\Delta)\to A_{f(v)}$ is a $\KK$-equivalence (resp. induces an isomorphism 
of $K_*(f^*A(\Delta))\cong K_*(A_{f(v)})$).
\end{defn}

\begin{prop}\label{prop-bundle}
Suppose that $A(X)$ is  a $K$-fibration. For any path $\gamma:[0,1]\to X$ 
with starting point $x$ and endpoint $y$ let
{$c_{\gamma}:K_*(A_x)\to K_*(A_y)$} denote the 
composition 
\begin{equation}\label{eq-cgamma}
{\begin{CD}
K_*(A_x) @>\eps_{0,*}^{-1}>\cong> K_*(\gamma^*A) @>\eps_{1,*}>\cong> K_*(A_y)
\end{CD}}
\end{equation}
Then $\mathcal K_*(A):=\{K_*(A_x): x\in X\}$ together with the above defined 
maps $c_{\gamma}$ is a group bundle over $X$.
\end{prop}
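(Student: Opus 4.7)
The plan is to verify the two axioms defining a group bundle: homotopy invariance of $c_\gamma$ and compatibility with path composition. Well\nobreakdash-definedness and the fact that each $c_\gamma$ is a group isomorphism are immediate from the $K$\nobreakdash-fibration hypothesis applied to $\gamma^*A$ over the compact contractible interval $[0,1]$. A preliminary observation I will use repeatedly is the following: whenever $f:\Delta\to X$ is a map from a compact contractible space and $\Delta'\subseteq\Delta$ is a compact contractible subspace, the restriction map $\rho:f^*A(\Delta)\to f^*A(\Delta')$ induces an isomorphism on $K$\nobreakdash-theory. Indeed, fixing any $v\in \Delta'$, the evaluation $\eps_v$ on $f^*A(\Delta)$ factors as $\eps_v\circ\rho$ where $\eps_v$ is now evaluation on $f^*A(\Delta')$; since both evaluations are $K$\nobreakdash-equivalences by the $K$\nobreakdash-fibration hypothesis, so is $\rho_*$. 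In particular, a composition of the form $\eps_{t_1,*}\circ\eps_{t_0,*}^{-1}$ is independent of whether it is computed on the full pullback or on a compact contractible restriction containing both evaluation points.

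For (i), let $H:[0,1]\times[0,1]\to X$ be a homotopy rel endpoints between $\gamma$ and $\gamma'$, and set $E=H^*A$. By the preceding observation applied to $[0,1]\times\{0\}$ and $[0,1]\times\{1\}$, the classes $c_\gamma$ and $c_{\gamma'}$ may be computed on $E$ as $\eps_{(1,0),*}\circ\eps_{(0,0),*}^{-1}$ and $\eps_{(1,1),*}\circ\eps_{(0,1),*}^{-1}$ respectively. Since $H$ is constant equal to $x$ on $\{0\}\times[0,1]$, the restriction of $E$ to this segment is the trivial bundle $C([0,1])\otimes A_x$; writing $\eps_{(0,s)}$ as this restriction followed by point evaluation at $s$ and using standard homotopy invariance of $K$\nobreakdash-theory gives $\eps_{(0,0),*}=\eps_{(0,1),*}$. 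The analogous argument on $\{1\}\times[0,1]$ yields $\eps_{(1,0),*}=\eps_{(1,1),*}$, and these two equalities immediately force $c_\gamma=c_{\gamma'}$.

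For (ii), write $\gamma_1\circ\gamma_2:[0,1]\to X$ as the standard concatenation ($\gamma_2(2t)$ on $[0,1/2]$ and $\gamma_1(2t-1)$ on $[1/2,1]$) and put $B=(\gamma_1\circ\gamma_2)^*A$. The two subintervals $[0,1/2]$ and $[1/2,1]$ are compact contractible, so the restrictions $B\to B|_{[0,1/2]}$ and $B\to B|_{[1/2,1]}$ induce $K$\nobreakdash-theory isomorphisms by the preliminary observation. Via the reparametrising homeomorphisms $[0,1]\cong[0,1/2]$ and $[0,1]\cong[1/2,1]$ combined with Lemma \ref{lem-pull2}, these restrictions are $*$\nobreakdash-isomorphic to $\gamma_2^*A$ and $\gamma_1^*A$ in a way that carries the endpoint evaluations $\eps_0,\eps_1$ on $\gamma_2^*A$ to $\eps_0,\eps_{1/2}$ on $B$ and, respectively, $\eps_0,\eps_1$ on $\gamma_1^*A$ to $\eps_{1/2},\eps_1$ on $B$. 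Hence
\[
c_{\gamma_1\circ\gamma_2}=\eps_{1,*}\circ\eps_{0,*}^{-1}=\bigl(\eps_{1,*}\circ\eps_{1/2,*}^{-1}\bigr)\circ\bigl(\eps_{1/2,*}\circ\eps_{0,*}^{-1}\bigr)=c_{\gamma_1}\circ c_{\gamma_2}.
\]
The argument is essentially bookkeeping; the only place that warrants care is checking that the evaluation maps commute with the various restriction and reparametrisation isomorphisms, which is handled by the functoriality built into Lemma \ref{lem-pull2}.
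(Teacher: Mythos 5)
Your proof is correct and follows essentially the same route as the paper's (which is much terser, asserting the composition rule is "clear" and identifying both $c_{\gamma_0}$ and $c_{\gamma_1}$ with a single corner-to-corner composition over the square $\Gamma^*A$). Your preliminary observation about restrictions to compact contractible subspaces and the corner bookkeeping are precisely the details the paper leaves implicit.
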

\begin{proof}
It is clear that constant paths induce the identity maps
and that $c_{\gamma\cdot \gamma'}=c_{\gamma}\circ c_{\gamma'}$, where $\gamma\cdot \gamma'$
denotes composition of paths.
Moreover, if $\Gamma:[0,1]\times [0,1]\to X$ is a homotopy between
the paths $\gamma_0$ and $\gamma_1$ with equal starting and endpoints, then 
 $c_{\gamma_0}$ and $c_{\gamma_1}$ both coincide with the 
composition {$\eps_{(1,1),*}\circ \eps_{(0,0),*}^{-1}$}, where $\eps_{(0,0)}$ and $\eps_{(1,1)}$
denote evaluation of $\Gamma^*A$ at the respective corners of $[0,1]^2$.
Hence we see that $c_\gamma$ only depends on the homotopy class of $\gamma$.
\end{proof}

\begin{defn}\label{defn-group-bundle}
Suppose that  $A(X)$  is a $K$-fibration (e.g., if $A(X)$ is a NCP $\TT^n$-bundle). 
Then
$\mathcal K_*(A):=\{K_*(A_x): x\in X\}$   together with 
the maps $c_{\gamma}:K_*(A_y)\to K_*(A_x)$  is called  the
{\em $K$-theory group bundle} associated to 
$A(X)$. 
\end{defn}

\begin{remark}\label{rem-KK}
 If $A(X)$ and $B(X)$ are two $K$-fibrations, then a class $\frak x\in 
\RKK(X, A(X), B(X))$ determines a morphism $\frak x_*: \mathcal K_*(A)\to \mathcal K_*(B)$ 
 given fibrewise by taking right 
Kasparov products with the evaluations $\frak x(x)\in \KK(A_x, B_x)$. Of course, these maps are 
isomorphisms of group bundles if and only $\frak x(x)$ induces an isomorphism 
$K_*(A_x)\cong K_*(B_x)$  for all $x\in X$.
This is certainly true if all $\frak x(x)$ are invertible in $\KK(A_x, B_x)$.
\end{remark}

In what follows next, we want to show that the $K$-theory group bundle 
of a NCP $\TT^n$-bundle $A(X)$ does not change if we twist this bundle
with any principal $\TT^n$-bundle $Y\stackrel{q}{\to}X$, i.e., we have
$\K_*(A(X))=\K_*(Y*A(X))$ for all such $Y$ (see \S \ref{sec-torus} for the notation).

Recall that $Y*A(X)$ is the fixed-point algebra of $q^*A=C_0(Y)\otimes_{q}A$
with respect to the diagonal $\TT^n$-actions (taking the inverse action on the first factor).
Evaluating at the fibres, we see that an element $b\in q^*A(X)$ is in $Y*A(X)$ if and 
only if $b(zy)=z^{-1}\cdot b(y)$, where on the right-hand side we consider the action 
of $z^{-1}\in \TT^n$ on the fibre $q^*A_y=A_{q(y)}$.
The fibre of $q^*A(X)$ over $x\in X$ is then given by 
$C(Y_x,A_x)$ with $Y_x=q^{-1}(\{x\})\cong \TT^n$, and hence the fibre $(Y*A)_x$
of $Y*A(X)$ at $x\in X$ is {canonically isomorphic} to the {fixed point algebra of $C(Y_x,A_x)\cong C(\TT^n, A_x)$ under }
the action
$$(z\cdot \varphi)(y)=z^{-1}\cdot \varphi(zy)\quad\text{for all}\; z\in \TT^n, \varphi\in C(Y_x,A_x),$$
from which we easily see that evaluation at any point $y\in Y_x$ induces a $\TT^n$-equivariant
isomorphism 
$$(Y*A)_x\cong A_x\quad
\text{for all $x\in X$}.$$

More generally, if $x$ is given, we can choose
an open neighbourhood $U$ of $x$   which trivializes the principal
$\TT^n$-bundle $Y$. Let $\phi: U\times \TT^n\to q^{-1}(U)$ be a trivializing map.
Then $Y*A(U)$ is the fixed point algebra of
\begin{equation}\label{equ-id}\begin{array}{rcl}
q^*A(q^{-1}(U))&=&C_0(q^{-1}(U))\otimes_{C_0(U)}A(U)\\
&\stackrel{\phi}{\cong}& C_0(\TT^n\times U)\otimes_{C_0(U)}A(U)\\
&\cong& C(\TT^n, A(U)).
\end{array}\end{equation}
Composing this chain of isomorphisms with 
$$C(\TT^n, A(U))\to A(U);\,f\mapsto f(1),$$ 
we obtain  an isomorphism
$$\Psi_{\phi}: Y*A(U)\to A(U)$$
between
the fixed point algebra $Y*A(U)$ of $q^*A(q^{-1}(U))$ and $A(U)$.
Looking carefully at the constructions, one finds that
$\Psi_{\phi}$ sends 
a section $b\in Y*A(U)$, which is given by a $\TT^n$-invariant section in $q^*A(q^{-1}(U))$,
 to the section of $A(U)$ given by 
$x\mapsto b(\phi(1,x))$. At the point $x\in U$ this induces the isomorphism
$Y*A_x\cong A_x$ considered above given by evaluation at $y=\phi(x,1)\in Y_x$.

\begin{lem}\label{lem-pullback}
Let $X$ and $Z$   be  locally compact spaces, let  $A(X)$ be a C*-algebra  
bundle over $X$ equipped with
a fiber-wise action of $\TT^n$ and let $Y\stackrel{q}{\to}X$ be a  principal
$\TT^n$-bundle. For any continuous map $h:Z\to X$, we denote by
$h^*Y\stackrel{q_h}{\to} Z$ the principal $\TT^n$-bundle  pulled
back from $Y\stackrel{q}{\to}X$ by $h$.
Then $h^*(Y*A)$ and
$h^*Y*h^*A$ are {canonically} isomorphic C*-algebra bundles over $Z$ and 
the following diagram commutes
$$
\begin{CD}K_*(h^*(Y*A)_z) @>\cong>>K_*(({h}^*Y*h^*A)_z)\\
@V\lambda_{h(z)} VV      @V\lambda_z VV\\
K_*(A_{h(z)})@>=\!=>> K_*(A_{h(z)}),
\end{CD}
$$
where 
\begin{itemize}
\item the top isomorphism is induced by the
bundle-isomorphism $h^*(Y*A)\to{h}^*Y*h^*A$ at the fibre over $z$,
\item the left vertical arrow is given by the isomorphism $h^*(Y*A)_z= (Y*A)_{h(z)}\cong A_{h(z)}$,
\item the right vertical arrow  is given by the isomorphism 
$(h^*Y*h^*A)_z\cong (h^*A)_z\cong A_{h(z)}$,
\end{itemize}
with  $(Y*A)_{h(z)}\cong A_{h(z)}$ and $(h^*Y*h^*A)_z\cong (h^*A)_z$ 
as in the above discussion.
\end{lem}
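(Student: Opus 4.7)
The approach proceeds in two steps. First, I would establish the bundle isomorphism $h^*(Y*A) \cong h^*Y * h^*A$ by unfolding both definitions and swapping the order of ``take $\TT^n$-fixed points'' and ``tensor balanced over $C_0(X)$ by $C_0(Z)$''; this swap is legitimate because $\TT^n$ acts trivially on $C_0(Z)$. Second, to verify that the asserted diagram commutes at the fibre over $z$, I would reduce to a local trivialization of $Y$ around $h(z)$, lift it to a local trivialization of $h^*Y$ around $z$, and use the explicit formula for $\Psi_\phi$ recalled in the discussion preceding the lemma.

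\textbf{Step 1 (bundle isomorphism).} Using $C_0(h^*Y) \cong C_0(Y)\otimes_{C_0(X)} C_0(Z)$ one has
\[
h^*Y * h^*A \;=\; \bigl(C_0(Y)\otimes_{C_0(X)} C_0(Z)\otimes_{C_0(X)} A\bigr)^{\TT^n},
\]
whereas
\[
h^*(Y*A) \;=\; C_0(Z)\otimes_{C_0(X)} \bigl(C_0(Y)\otimes_{C_0(X)} A\bigr)^{\TT^n}.
\]
Because the $\TT^n$-action on $C_0(Z)$ is trivial and $\TT^n$ is compact (so averaging by Haar measure gives a conditional expectation onto the fixed-point algebra that is $C_0(Z)$-linear), taking fixed points commutes with the balanced tensor by $C_0(Z)$. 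This gives a canonical $C_0(Z)$-linear, $\TT^n$-equivariant isomorphism between the two bundles.

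\textbf{Step 2 (fibre diagram).} Pick a neighbourhood $U$ of $h(z)$ together with a trivialization $\phi\colon U\times \TT^n \to q^{-1}(U)$ of $Y$. It induces a trivialization of $h^*Y$ over $h^{-1}(U)$ via $\phi_h(z',t) := \bigl(z', \phi(h(z'),t)\bigr)$, under the identification $(h^*Y)_{z'} \cong Y_{h(z')}$. According to the formula for $\Psi_\phi$ recalled just before the lemma, the left vertical arrow $\lambda_{h(z)}$ is evaluation at the point $y_0 := \phi(h(z),1)\in Y_{h(z)}$, while the right vertical arrow $\lambda_z$ is evaluation at $\phi_h(z,1)$, which corresponds to the same $y_0$. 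The top horizontal isomorphism constructed in Step 1 merely permutes and identifies the tensor factors, so both compositions send a $\TT^n$-invariant section of $C_0(Y)\otimes_{C_0(X)} C_0(Z)\otimes_{C_0(X)} A$ to its value at $y_0$. Applying $K_*$ gives the desired commutative square.

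\textbf{Main obstacle.} The argument is essentially formal; the only genuinely non-trivial step is the commutation of fixed points with the balanced tensor by $C_0(Z)$ in Step 1, which must be checked for a general (non-compact) $Z$. The bookkeeping in Step 2 is routine once one fixes a single $\phi$ and transports it by $\phi_h$, so that the chosen ``$1$-section'' point in $Y_{h(z)}$ is simultaneously the base point used by both vertical maps.
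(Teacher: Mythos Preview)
Your proof is correct. Step~1 is the same idea as the paper's, just phrased abstractly: the paper writes down the explicit $C_0(Z)$-linear, $\TT^n$-equivariant map on elementary tensors
\[
(g_1\otimes_X f)\otimes_Z(g_2\otimes_X a)\;\longmapsto\; g_1g_2\otimes_X(f\otimes_X a),
\]
whereas you invoke the commutation of the fixed-point functor with the balanced tensor by $C_0(Z)$ via Haar averaging; these amount to the same isomorphism.

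Step~2 is where you genuinely diverge. You arrange from the outset for the two vertical arrows to evaluate at the \emph{same} point $y_0=\phi(h(z),1)\in Y_{h(z)}$ by transporting the trivialization $\phi$ of $Y$ to the induced trivialization $\phi_h$ of $h^*Y$; the square then commutes already at the level of algebras. The paper instead allows the two vertical evaluations to be taken at \emph{arbitrary} points $y_1,y_2\in Y_{h(z)}$, observes that they differ by the action of some $u\in\TT^n$, and then uses connectedness of $\TT^n$ to conclude that the resulting automorphism of $A_{h(z)}$ is homotopic to the identity and hence trivial on $K_*$. Your route is shorter and avoids the homotopy argument; the paper's route is more robust and, as a by-product, shows that the $K$-theory map $\lambda_{x,*}$ is independent of the chosen base point in $Y_x$ (this is exactly the content of Remark~\ref{rem-auto}). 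If you want your argument to cover the lemma for \emph{any} choices of the vertical isomorphisms, you would still need to add that one-line connectedness observation.
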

\begin{proof}
The algebra $h^*Y*h^*A$ is the fixed point algebra of
$$\big(C_0(Z)\otimes_{C_0(X)}C_0(Y)\big)\otimes_{C_0(Z)}\big(C_0(Z)\otimes_{C_0(X)}A\big)$$
under the diagonal action of $\TT^n$ which is given by the $\TT^n$-action on
$Y$ for the first factor and the inverse of the given $\TT^n$-action on $A$ for the second.
On the other hand, $h^*(Y*A)=C_0(Z)\otimes_{C_0(X)}(Y*A)$ can be 
identified with the fixed point algebra of 
$$C_0(Z)\otimes_{C_0(X)}\big(C_0(Y)\otimes_{C_0(X)}A),$$
with $\TT^n$ acting on $Y$ and $A$ precisely as above. Thus, to see that 
the fixed point algebras are isomorphic, it suffices to find an isomorphism 
$$\big(C_0(Z)\otimes_{C_0(X)}C_0(Y)\big)\otimes_{C_0(Z)}\big(C_0(Z)\otimes_{C_0(X)}A\big)
\cong C_0(Z)\otimes_{C_0(X)}\big(C_0(Y)\otimes_{C_0(X)}A),$$
which respects this $\TT^n$-action. But it is straight-forward
 to check that such isomorphism is given on elementary tensors
by
$$(g_1\otimes_X f)\otimes_Z(g_2\otimes_Xa)\mapsto g_1g_2\otimes_X(f\otimes_Xa)$$
whenever $g_1,g_2\in C_0(Z), f\in C_0(Y)$ and $a\in A$.  
Let us denote this isomorphism by $\Psi$ and we denote the induced isomorphism
on the fixed-point algebras by 
$$\Psi^{\TT^n}:h^*Y*h^*A\to h^*(Y*A).$$
This map is certainly $C_0(Z)$-linear, and since on both algebras 
$h^*Y*h^*A$  and $h^*(Y*A)$ the $\TT^n$-action is induced by the given action on 
the factor $C_0(Y)$, it is also $\TT^n$-equivariant. This proves the 
first statement of the lemma. 

In order to prove commutativity of the $K$-theory diagram, we first note that 
for a given $z\in Z$ the fibres of $(h^*Y*h^*A)_z$ and $h^*(Y*A)_z$ are 
the fixed point algebras of the fibres over $z$ of the algebras
$$\big(C_0(Z)\otimes_{C_0(X)}C_0(Y)\big)\otimes_{C_0(Z)}\big(C_0(Z)\otimes_{C_0(X)}A\big)
\quad\text{and}\quad C_0(Z)\otimes_{C_0(X)}\big(C_0(Y)\otimes_{C_0(X)}A),$$
respectively. In both cases, this fibre is given by
$C_0(Y_{h(z)}, A_{h(z)}),\quad\text{with} \quad Y_{h(z)}=q^{-1}(\{h(z)\})\subseteq Y,$
where for the first algebra the quotient map sends an elementary tensor 
$(g_1\otimes_X f)\otimes_Z(g_2\otimes_Xa)$ to 
$g_1(z)g_2(z)(f|_{Y_{h(z)}}\otimes a(h(z)))$
and for the second algebra an elementary tensor $g\otimes_X(f\otimes_X a)$ is mapped to
$g(z)(f|_{Y_{h(z)}}\otimes a(h(z))).$
It  follows from this description that the isomorphism $\Psi$ constructed above
intertwines these evaluations at $z$, and hence the isomorphism 
$\Psi^{\TT^n}:h^*Y*h^*A\to h^*(Y*A)$ induces the identity on 
the fixed point algebra of  $C_0(Y_{h(z)},A_{h(z)})$ under these evaluations.
The homomorphisms in the vertical arrows of the diagram in the lemma 
now only depend on the choice of two possible elements $y_1, y_2\in Y_{h(z)}$
on which we evaluate an invariant function $f\in C(Y_{h(z)}, A_{h(z)})$.
Since $\TT^n$ acts transitively on $Y_{h(z)}$ we find
$u\in \TT^n$ such that $y_1=u\cdot y_2$. 
We then have 
$$\lambda_{h(z)}(f)=f(y_1)=f(u\cdot y_2)=u^{-1}\cdot (f(y_2))=u^{-1}(\lambda_z(f)).$$
this shows that on the level of algebras, the diagram in the Lemma commutes up 
to an automorphism of the fibre $A_{h(z)}$ given by the action of a fixed element 
$u\in \TT^n$. Since $\TT^n$ is connected, this automorphism is homotopic to the identity,
and therefore induces the identity map on $K_*(A_{h(z)})$.
\end{proof}

 \begin{remark}\label{rem-auto}
Note that we saw in the discussion preceding the above lemma, that the 
isomorphism $\lambda_x:(Y*A)_x\cong A_x$ depends on a choice of an element 
$y\in Y_x$. However, the last argument in the above proof shows that 
the $K$-theory map $\lambda_{x,*}:K_*((Y*A)_x)\cong K_*(A_x)$ 
is independent from this choice.
\end{remark}

We are now ready to show that the K-theory group bundle of a NCP $\TT^n$-bundle $A(X)$ 
is invariant under the action of a principal $\TT^n$-bundle $Y\stackrel{q}{\to}X$:

\begin{lem}\label{lem-group-twisted-bundle}
Let $A(X)$  be an NCP $\TT^n$-bundle and let $Y\stackrel{q}{\to}X$
be any principal $\TT^n$-bundle over $X$. Then 
 for any path $\mu:[0,1]\to X$ there is a $C[0,1]$-isomorphism
 $\Lambda:\mu^*(Y*A(X))\to\mu^*A(X)$ such that for any 
 $t\in[0,1]$, the following diagram commutes:
$$
\begin{CD}  K_*(\mu^*(Y*A)) @>\eps_{t,*} >> K_*((Y*A)_{\mu(t)})\\
@V\Lambda_* VV      @V\lambda_{\mu(t)} VV\\
K_*(\mu^*A)@>\eps_{t,*}>> K_*(A_{\mu(t)}),
\end{CD}
$$
where $\eps_t$ is the evaluation at $t$. As a consequence, the 
$K$-theory group bundles of $A(X)$ and of $Y*A(X)$ coincide.
\end{lem}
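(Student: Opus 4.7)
The plan is to exploit the fact that the interval $[0,1]$ is contractible, so the pulled-back principal torus bundle $\mu^*Y \to [0,1]$ is trivial, and the desired isomorphism $\Lambda$ will come from composing the natural identification of Lemma \ref{lem-pullback} with a trivialization.

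First I would apply Lemma \ref{lem-pullback} with $h=\mu$ and $Z=[0,1]$ to obtain a canonical $C[0,1]$-linear, $\TT^n$-equivariant isomorphism
\[
\mu^*(Y*A)(X) \;\cong\; (\mu^*Y)*(\mu^*A)([0,1]).
\]
Since $[0,1]$ is contractible, the principal $\TT^n$-bundle $\mu^*Y\to[0,1]$ is trivial; choose a trivialization $\phi:[0,1]\times\TT^n \to \mu^*Y$. By the discussion preceding Lemma \ref{lem-pullback} (applied with the base space $U=[0,1]$), this trivialization yields an isomorphism
\[
\Psi_\phi : (\mu^*Y)*(\mu^*A)([0,1]) \;\longrightarrow\; \mu^*A([0,1])
\]
of $C[0,1]$-algebras, given on a $\TT^n$-invariant section $b$ by $\Psi_\phi(b)(t)=b(\phi(t,1))$. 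The composition of these two isomorphisms is the required map $\Lambda$.

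Next I would verify the commutativity of the $K$-theory square. Evaluating $\Lambda$ at $t\in[0,1]$ factors as the fibre-isomorphism from Lemma \ref{lem-pullback} at $z=t$ followed by the fibrewise effect of $\Psi_\phi$, which is exactly evaluation at the point $y_t:=\phi(t,1)\in (\mu^*Y)_t = Y_{\mu(t)}$. That evaluation is one of the admissible models of the isomorphism $(Y*A)_{\mu(t)}\cong A_{\mu(t)}$ discussed before Lemma \ref{lem-pullback}; hence at the level of algebras the square commutes up to a $\TT^n$-automorphism of $A_{\mu(t)}$ coming from the transitive $\TT^n$-action on $Y_{\mu(t)}$. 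Since $\TT^n$ is connected, such an automorphism is homotopic to the identity and induces the identity on $K_*(A_{\mu(t)})$ (this is exactly the observation recorded in Remark \ref{rem-auto}), so the $K$-theory diagram commutes as stated.

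Finally, for the ``as a consequence'' part, I would use this square at the endpoints $t=0$ and $t=1$ of a path $\mu$ from $x$ to $y$ to compare the transport maps defined in (\ref{eq-cgamma}): since $c_\mu^A = \eps_{1,*}\circ\eps_{0,*}^{-1}$ on $K_*(\mu^*A)$ and similarly for $Y*A$, the commutativity gives
\[
c_\mu^A \circ \lambda_x \;=\; \lambda_y \circ c_\mu^{Y*A},
\]
so the family $\{\lambda_x: K_*((Y*A)_x)\xrightarrow{\cong} K_*(A_x)\}_{x\in X}$ is a morphism, hence an isomorphism, of group bundles. The one subtle point — and the only real obstacle — is confirming that $\Lambda$ can be produced canonically enough for the diagram to commute on the nose in $K$-theory despite the non-canonical choice of trivialization $\phi$; this is precisely resolved by Remark \ref{rem-auto}, which renders the induced $K$-theory isomorphism independent of choices.
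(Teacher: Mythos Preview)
Your proposal is correct and follows essentially the same route as the paper: apply Lemma~\ref{lem-pullback} to identify $\mu^*(Y*A)$ with $(\mu^*Y)*(\mu^*A)$, trivialize $\mu^*Y$ over the contractible interval, and use the resulting $\Psi_\phi$ to obtain $\Lambda$, with Remark~\ref{rem-auto} handling the non-canonical choice at the $K$-theory level. Your write-up is in fact more explicit than the paper's (which compresses the verification of $\Lambda_{t,*}=\lambda_{\mu(t)}$ into a single sentence), but the ideas are identical.
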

\begin{proof} According to Lemma \ref{lem-pullback}, the algebras
$\mu^*(Y*A)$ and ${\mu}^*Y*{\mu}^*A$ are $C[0,1]$-isomorphic.
 Since $[0,1]$ is contractible,  the
principal $\TT^n$-bundle  ${\mu}^*Y$ is trivializable. In view of
the discussion at the beginning of the subsection, the choice of a
trivialization induces a $C[0,1]$-isomorphism
$\Lambda:\mu^*(Y*A)\to\mu^*A$ and for $t$ in $[0,1]$, we have
$\Lambda_{t,*}=\lambda_{\mu(t)}$. The commutativity of the diagram is
now just the compatibity between $\Lambda$ and the evaluation maps.
\end{proof}

\section{The action of $\pi_1(X)$ in the two-dimensional case}\label{sec-dim2}
In this section we want to study the $K$-theory group bundle 
for NCP $\TT^2$-bundles. In this case the
representation group $H_2$ is the discrete Heisenberg group, and we 
have to study the group algebra $C^*(H_2)$ as a NCP $\TT^2$-bundle over $\TT$.

Recall that $C^*(H_2)$ is  the C*-algebra generated by the
unitaries $U,V$ and $W$ and the  relations  $$UW=WU,\,VW=WV \text{ and
  } UV=WVU. $$ 
The $C(\TT)$-structure is then provided
by the central unitary $W$ and the fiber at $z=e^{2\pi i\theta}\in\TT$, for
$\theta\in\RR$, is  the noncommutative torus $A_\theta$ generated by the
unitaries $U_\theta$ and $V_\theta$ and the relation
$U_\theta V_\theta=e^{2\pi i\theta}V_\theta V_\theta$. The $K$-theory
groups 
for noncommutative tori are 
$$K_0(A_\theta)\cong K_1(A_\theta)\cong\ZZ^2.$$
 In order to describe
the action of $\pi_1(\mathbb{T})\cong\ZZ$ on the $K$-theory group
bundle of  $C^*(H_2)$, we shall first recall the standard generators for $K_*(A_\theta)$:
\begin{itemize}
\item The standard generators for $K_1(A_\theta)$ are the classes of
  the unitaries $U_\theta$
  and $V_\theta$.
\item If $\theta\in(0,1)$, then the standard generators for
$K_0(A_\theta)$ are  the class  of the trivial projector $1\in A_\theta$
and the class of the Rieffel projector $p_\theta$.
\end{itemize}
The standard two generators for $K_0(C(\mathbb{T}^2))$  are 
\begin{itemize}
\item the
 class  of the trivial projector $1\in C(\mathbb{T}^2)$;
\item the Bott element $\beta$, i.e the unique element in $K_0(C(\mathbb{T}^2))$
with Chern character equal to the class in $H^2(\mathbb{T}^2,\RR)$  of the volume
form of the $2$-torus.
\end{itemize}

It follows from Proposition \ref{prop-bundle} together with the discussions preceding it,
that we obtain a natural action of $\pi_1(\TT)=\ZZ$ on $K_*(A_z)$ for all $z\in \TT$
and we are now going to compute this action.
 For $z=1$, the   action of the loop {generating $\pi_1(\TT)$}
$$\nu:[0,1]\to \TT;\,\theta \mapsto e^{2i\pi \theta}$$
on  $K_*(C(\TT^2))$ arising from the $K$-fibration  is given by 
$\epsilon_{1,*}\circ \epsilon_{0,*}^{-1}$ where
$$\epsilon_{j}:\nu^*(C^*(H_2))\longrightarrow
A_{j}\cong C(\TT^2)$$ is the evaluation at $j$ for $j=0,1$. 
Since the generators $U$ and $V$ of $C^*(H_2)$  provide global sections of unitaries  of the
C*-algebra bundle $C^*(H_2)(\TT)$ and since 
$U(1)$ and $V(1)$ are the standard generators of $K_1(C(\TT^2))$, we see that the
action of the loop $\nu$ on $K_1(C(\TT^2))$ is trivial. The unit  $1\in C^*(H_2)$ 
also provides a global section which 
evaluates at $1 \in C(\TT^2)$, from which we deduce that
  $[1]\in K_0(C(\TT^2))$ is also left invariant by the
  action of the loop $\nu$.
Let us now   compute the image under  {$\epsilon_{1,*}\circ
  \epsilon_{0,*}^{-1}$} of the generator $\beta$.
For this we define the morphism $$\iota_\nu:  C[0,1]\to
\nu^*(C^*(H_2))=C[0,1]\otimes_\nu C^*(H_2);\, f \mapsto
f\otimes_\nu 1$$ and we set
$$U'=1\otimes_\nu U\text{ and  }V'=1\otimes_\nu V$$ in $\nu^*(C^*(H_2))$. Let us 
define  a $\nu^*(C^*(H_2))$-valued inner product on $C_c([0,1]\times \RR)$ by 
$$\langle\xi,\eta\rangle=\sum_{(m,n)\in\ZZ^2}\iota_\nu(\langle\xi,\eta\rangle_{m,n})U'^mV'^n,$$
where for any pair of integers $(n,m)$,  $\langle\xi,\eta\rangle_{m,n}$ is the continuous function on
$C[0,1]$ defined by 
$$\langle\xi,\eta\rangle_{m,n}(\theta)=(\theta+1)\int_{-\infty}^{+\infty}{\bar{\xi}}(\theta,x+m\theta+m)\nu(\theta,x)e^{-2i\pi
  nx}dx\quad\theta\in [0,1].$$  
Evaluation of  $\langle\xi,\xi\rangle$ at each fiber
$\theta\in[0,1]$   is positive  in $A_{\theta}$ (see \cite{Rie}) and thus
$\langle\xi,\xi\rangle$ is a positive element of
$\nu^*(C^*(H_2))$. Let us denote by
$\mathcal{E}$ the completion of $C_c([0,1]\times \RR)$ with respect to the
above inner product.
There is a right action of $C[0,1]$ on $\mathcal{E}$ which is given on  $C_c([0,1]\times \RR)$  by
pointwise multiplication and a right action of $C^*(H_2)$ on
$\mathcal{E}$  given 
in the following way:
\begin{itemize}
\item $W$ acts by pointwise multiplication by $\nu$;
\item $\xi\cdot U(\theta,x)=\xi(\theta,x+\theta+1)$ for every  $\xi\in
  C_c([0,1]\times \RR)$;
\item $\xi\cdot V(\theta,x)=e^{2\pi i x}\xi(\theta,x)$ for every  $\xi\in
  C_c([0,1]\times \RR)$.
\end{itemize}
The action   of  $C[0,1]$ and of  $C^*(H_2)$ on $\mathcal{E}$  commute
and thus induce an action  of
$C[0,1]\otimes C^*(H_2)$. It is straightforward to check that this
action factorizes
through an action of $\nu^*(C^*(H_2))=C[0,1]\otimes_\nu C^*(H_2)$ on $\mathcal{E}$. We
define in this way a $\nu^*(C^*(H_2))$-right Hilbert module structure.
The algebra $\mathcal{K}(\mathcal{E})$ of compact operators on $\mathcal{E}$ is a
$C[0,1]$-algebra and according to \cite{Rie} the fiber of
$\mathcal{K}(\mathcal{E})$  at any
$\theta\in[0,1]$  is unital.

 The following well know lemma (e.g., see \cite[Proposition C.24]{Will}) applies to the
$C[0,1]$-algebra $\mathcal{K}(\mathcal{E})+\CC \Id_\mathcal{E}$ to
prove that $\mathcal{K}(\mathcal{E})$ is a unital $C^*$-algebra.
\begin{lem}\label{lem-isofiber}
Let $A(X)$ be a C*-algebra bundle over the compact space $X$ and let $B(X)\subseteq A(X)$ be
any closed  $C_0(X)$-invariant $*$-subalgebra of $A(X)$ 
with $A_x=B_x$ for every $x\in X$.
Then $A(X)=B(X)$.
\end{lem}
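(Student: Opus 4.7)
The plan is to show that an arbitrary $a\in A(X)$ can be approximated in norm by elements of $B(X)$, which suffices since $B(X)$ is assumed closed.

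First, I would fix $\varepsilon>0$ and use the hypothesis $A_x=B_x$ fibrewise: for each $x\in X$, choose $b_x\in B(X)$ with $b_x(x)=a(x)$, so in particular $\|(a-b_x)(x)\|=0$. The crucial tool is then the upper semi-continuity of the norm function $y\mapsto\|(a-b_x)(y)\|$ on $X$ (recalled in the preliminaries of the paper, following \cite[Appendix C]{Will}); this provides, for each $x$, an open neighbourhood $U_x$ of $x$ on which $\|(a-b_x)(y)\|<\varepsilon$.

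Next I would invoke compactness of $X$ to extract a finite subcover $U_{x_1},\dots,U_{x_k}$ and choose a partition of unity $\{\varphi_i\}_{i=1}^k\subseteq C(X)$ subordinate to it. Because $B(X)$ is closed under the $C_0(X)=C(X)$-module structure, the element
$$b:=\sum_{i=1}^k\varphi_i\cdot b_{x_i}$$
lies in $B(X)$. Using $\sum_i\varphi_i\equiv 1$, I would then estimate fibrewise: at any $y\in X$,
$$\|(a-b)(y)\|=\Big\|\sum_{i}\varphi_i(y)\big(a(y)-b_{x_i}(y)\big)\Big\|\le\sum_{i:\,y\in\operatorname{supp}\varphi_i}\varphi_i(y)\,\|(a-b_{x_i})(y)\|<\varepsilon,$$
since only indices $i$ with $y\in U_{x_i}$ contribute. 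Taking the supremum over $y$ gives $\|a-b\|\le\varepsilon$, so $a$ is in the norm closure of $B(X)$, hence in $B(X)$.

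The only place where any care is needed is the passage from pointwise fibre-equality to a global approximation, which is precisely why one needs upper semi-continuity of the norm and compactness of $X$; in the non-compact case one would have to localize further. I do not anticipate any real obstacle beyond getting these two ingredients lined up correctly.
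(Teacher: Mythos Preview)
Your proof is correct and is precisely the standard argument the paper alludes to by citing \cite[Proposition C.24]{Will}; the paper itself does not spell out a proof. The key ingredients---upper semi-continuity of the fibre norms, compactness, and a partition of unity exploiting the $C(X)$-module structure of $B(X)$---are exactly what is needed, and you have assembled them correctly.
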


As a  consequence  the pair $(\mathcal \E, 0)$  defines a class in $\KK(\CC, \nu^*(C^*(H_2)))=
K_0(\nu^*(C^*(H_2)))$ which we
shall denote by $[\mathcal{E}]$.
In order to express $$\epsilon_{0,*}([\mathcal{E}])\in
K_0(C(\TT^2))\text{ and }\epsilon_{1,*}([\mathcal{E}])\in K_0(C(\TT^2))$$ in the basis of
standard generators, we need to compute for both of these elements
\begin{itemize}
\item their  rank;
\item the image of  the   Chern
character  under the fundamental class  in homology of $\TT^2$. 
\end{itemize}
Both computations have been carried out  in
\cite[p. 232]{Connes}\footnote{{Actually in \cite{Connes} the second computation was carried out
    in cyclic cohomology. The image of the Chern character was under
    the higher trace (i.e a pairing with a  cycle)
     corresponding to the fundamental
    class of $\TT^2$ under the Hochschild-Kostant-Rosenberg isomorphism
    between  de Rham homology of $\TT^2$ and the cyclic cohomology of
     $C^\infty(\TT^2)$}.}
from where we deduce { $$\epsilon_{0,*}([\mathcal{E}])=[1]+\beta\text{
  and }
\epsilon_{1,*}([\mathcal{E}])=2[1]+\beta.$$} Eventually, the action of 
the loop $\nu$ on $\in K_0(C(\TT^2))$ is given on the basis of standard
generators
$([1],\beta)$ by the  matrix $\begin{pmatrix}
  1& {1}\\0&1\end{pmatrix}$.
{For any irrational $\theta\in(0,1)$, 
Rieffel computes in \cite[Theorem 1.4]{Rie} that $\tau(\eps_{\theta,*}([\mathcal{E}])=\theta+1$,
and hence we get from the injectivity of the trace on $K_0(A_{\theta})$ that
$$\eps_{\theta,*}([\mathcal{E}])=[p_\theta]+[1],$$}
{where $p_\theta\in A_{\theta}$ denotes the Rieffel-projection.  
Thus, the
action of the (oriented)  generator of $\pi_1(\TT)\cong\ZZ$ on
$K_0(A_\theta)$ is given on the basis  of generators $([1],[p_\theta])$
by the  matrix $\begin{pmatrix} 1&1\\0&1\end{pmatrix}$.}
In what
follows, we will  denote 
by $M_z$ the endomorphism of $K_0(C^*(H_2)_z)$ corresponding to the
action of the  oriented generator of $\pi_1(\TT)$.

Assume now that $f:X\to \TT$ is any continuous map from the path connected 
locally compact space $X$ to $\TT$ and let $x_0\in X$ be any 
chosen base-point of
$X$. Next we want to compute the 
$K$-theory group bundle of the pull-back $f^*C^*(H_2)(X)$, which amounts 
in computing the action of $\pi_1(X)$ on the fibre $K_*(A_{f(x_0)})$.
Note that $f$ induces a map
$$\pi_1(X)\to \ZZ,\, \gamma \mapsto \langle f, \gamma\rangle,$$  where for any continuous map
$h:\TT\to X$ representing $\gamma$, the integer $\langle f, \gamma\rangle$
 is the winding number of $f\circ h:\TT\to\TT$.

\begin{prop}\label{prop-pi1}
Let $X$ be a path connected locally compact space and let $f:X\to\TT$ be a continuous
map.
Then for any fixed $x_0$ in $X$, the action of $\gamma\in\pi_1(X)$ on
$K_*(C^*(H_2)_{f(x_0)})$ arising from  the NCP $\TT^2$-bundle  $f^*(C^*(H_2))$ is trivial
on $K_1(C^*(H_2)_{f(x_0)})$ and it is given by $M_{f(x_0)}^{\langle
  f,\gamma\rangle}$ on $K_0(C^*(H_2)_{f(x_0)})$. 
\end{prop}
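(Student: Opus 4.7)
\medskip
\noindent\textbf{Proof proposal.} The plan is to reduce the statement to the direct computations already performed on the base bundle $C^*(H_2)(\TT)$, using naturality of the $K$-theory group bundle with respect to pullbacks. Let $\gamma:[0,1]\to X$ be a loop based at $x_0$ representing a class in $\pi_1(X,x_0)$. The key step is the canonical identification of $C[0,1]$-algebras
$$\gamma^*\bigl(f^*C^*(H_2)\bigr)\;\cong\;(f\circ\gamma)^*C^*(H_2),$$
which follows from associativity of the balanced tensor product of $C_0$-algebras and intertwines the evaluation maps $\eps_0$ and $\eps_1$. From the defining formula \eqref{eq-cgamma} for $c_\gamma$ we therefore obtain that the isomorphism associated to $\gamma$ via the $K$-fibration $f^*C^*(H_2)$ agrees with the isomorphism $c_{f\circ\gamma}$ associated to the loop $f\circ\gamma$ at $f(x_0)$ via the $K$-fibration $C^*(H_2)(\TT)$.

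Next, by Proposition \ref{prop-bundle} the map $c_{f\circ\gamma}$ depends only on the homotopy class of $f\circ\gamma$ in $\pi_1(\TT,f(x_0))\cong\ZZ$, and this class is by construction $\langle f,\gamma\rangle$ times the oriented generator $\nu_{f(x_0)}$ of $\pi_1(\TT,f(x_0))$. Since composition of paths is sent to composition of the induced maps, we conclude
$$c_\gamma \;=\; c_{f\circ\gamma} \;=\; \bigl(c_{\nu_{f(x_0)}}\bigr)^{\langle f,\gamma\rangle}.$$

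It then remains to identify $c_{\nu_{f(x_0)}}$ on both $K$-groups of $C^*(H_2)_{f(x_0)}$. On $K_1$, the global unitary sections $U,V\in C^*(H_2)$ restrict at every $z\in\TT$ to the standard generators of $K_1(A_\theta)$, so both generators lift to genuine sections of the pulled-back bundle $\nu^*C^*(H_2)$ and their $K$-theory classes are fixed by $\eps_{1,*}\circ\eps_{0,*}^{-1}$; hence the action on $K_1$ is trivial. On $K_0$, the map $c_{\nu_{f(x_0)}}$ is by definition the endomorphism $M_{f(x_0)}$ introduced just before the proposition. Combining these two facts yields the claimed formula.

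The only step requiring care is the canonical identification of $\gamma^*(f^*C^*(H_2))$ with $(f\circ\gamma)^*C^*(H_2)$ together with its compatibility with the fibrewise evaluation maps at $0$ and $1$; but this is a straightforward consequence of the general behaviour of pullbacks of $C_0(X)$-algebras recorded in Section~1, and once it is in place the proposition follows immediately from the computation of $c_\nu$ carried out in the paragraphs preceding the statement.
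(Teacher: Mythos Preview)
Your argument is correct and follows essentially the same route as the paper's own proof: identify $\gamma^*(f^*C^*(H_2))\cong (f\circ\gamma)^*C^*(H_2)$, observe that the resulting loop in $\TT$ has winding number $\langle f,\gamma\rangle$, and then invoke the preceding computation of $M_z$ together with the global unitary sections $U,V$ for the triviality on $K_1$. The only difference is that you spell out the naturality and homotopy-invariance steps a bit more explicitly than the paper does.
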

\begin{proof}
Let us represent $\gamma\in\pi_1(X)$ by a continuous map $g:[0,1]\to
X$ such that $g(0)=g(1)=x_0$. Then the action  of $\gamma$ on $K_*(C^*(H_2)_{f(x_0)})$
is given by 
$\epsilon_{0,*}\circ \epsilon_{1,*}^{-1}$ where
$$\epsilon_{j}:g^*(f^*(C^*(H_2))\longrightarrow C^*(H_2)_{f(x_0)}$$
 is the evaluation at $j=0,1$ of the $C[0,1]$-algebra $$g^*(f^*(C^*(H_2))\cong (f\circ
g)^*(C^*(H_2)).$$
The map $f\circ
g$, viewed as a map on the circle, has winding number $n={\langle
  f,\gamma\rangle}$. Hence it follows from the above computations for the 
  single loop of $\TT$  that the action of the loop  $f\circ
g$ on $K_*(C^*(H_2)_{f(g(0))})=K_*(C^*(H_2)_{f(x_0)})$ is  $M_{f(x_0)}^n$.
\end{proof}

\section{The action of $\pi_1(X)$ in the general case}\label{sec-general}
In this section we want to extend the results of the previous section 
to NCP $\TT^n$-bundles for arbitrary $n\in \NN$. We do this by 
exploiting the results for $\TT^2$-bundles obtained above.
The key for this is the study of the 
``universal'' NCP $\TT^n$-bundles $C^*(H_n)$ over $\widehat{Z}_n\cong \TT^{\frac{n(n-1)}{2}}$, where $$1\to Z_n\to H_n\to \ZZ^n\to 1$$ 
is the representation group of $\ZZ^n$ as described in \S \ref{sec-torus}.
To be  more precise, as we shall see below, the restriction of $C^*(H_n)$ to
certain ``basic circles'' gives a direct link between the $\TT^n$ case and the $\TT^2$-case.

Consider the circles  $\TT_{ij}$
which are given by the characters of the $i,j$-th coordinate of $\ZZ^{\frac{n(n-1)}{2}}$ for $i<j$.
These  are
represented by those matrices $\Theta$ which are everywhere zero except in 
the entry $\Theta_{ij}$. We call $\TT_{ij}, i<j$, a {\em basic circle} in 
$\TT^{\frac{n(n-1)}{2}}$. 
(If we identify $\ZZ^{\frac{n(n-1)}{2}}$
with the integer valued strictly upper trianguar matrices).
In what follows below we shall need a careful description of the restriction $C^*(H_n)|_{\TT_{ij}}$
of the universal NCP $\TT^n$-bundle $C^*(H_n)$ to the basic circles
$\TT_{ij}$. In order to do this, we should first state some  basic facts:
first of all, if $A(X)$ is any C*-algebra bundle with fibres $A_x$, then the space
$\widehat{A(X)}$ of unitary equivalence classes of irreducible representations of $A(X)$
is the disjoint union $\cup_{x\in X}\widehat{A}_x$, where the representations of $A_x$
are regarded as representations of $A(X)$ via composition with the quotient map
$\eps_x:A(X)\to A_x$ (e.g., see \cite[Theorem 10.4.3]{Dix}). 
If we regard $C^*(H_n)$ as a bundle over $\widehat{Z}_n$ with bundle structure
given via convolution with
$C_0(\widehat{Z}_n)\cong  C^*(Z_n)$, then we can identify 
$$\widehat{C^*(H_n)}_\chi=\{\pi\in \widehat{C^*(H_n)}: \pi|_{Z_n}=\chi\cdot 1_\pi\},$$
where we recall that by Schur's lemma every irreducible representation of $H_n$ 
restricts {on $Z_n$} to a multiple of a character $\chi\in Z_n=Z(H_n)$.
Using these facts, we get

\begin{lem}\label{lem}
The restriction $C^*(H_n)|_{\TT_{ij}}$ of $C^*(H_n)$ to the basic circle $\TT_{ij}$ 
is canonically isomorphic to $C^*(H_2)\otimes C(\TT^{n-2})$, as 
a non-commutative $\TT^n$-bundle over $\TT\cong \TT_{ij}$, where the 
$\TT^n$-action on $C^*(H_2)\otimes C(\TT^{n-2})$ is given via the action of the 
$i$th and $j$th coordinates as the $1$st and $2$nd coordinates of 
the non-commutative $\TT^2$-bundle $C^*(H_2)$ 
and the other coordinates acting (by their nummerical order) on the coordinates 
of the factor $C(\TT^{n-2})$.
\end{lem}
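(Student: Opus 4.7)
The plan is to exploit the universal presentation of $C^*(H_n)$ given in Corollary \ref{cor-universal}. The restriction of $C^*(H_n)$ to the basic circle $\TT_{ij}$ corresponds, on the level of the $\widehat{Z}_n$-bundle structure, to the quotient by the ideal generated by $\{V_{kl}-1 : (k,l)\neq (i,j),\ k<l\}$, since $\TT_{ij}\subset\TT^{n(n-1)/2}$ is precisely the vanishing locus of the coordinate characters $V_{kl}-1$ for $(k,l)\neq(i,j)$.

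First I would write down this quotient $C^*$-algebra explicitly. Killing all $V_{kl}-1$ with $(k,l)\neq(i,j)$ in the presentation of Corollary \ref{cor-universal} yields the universal $C^*$-algebra on the generators $U_1,\ldots,U_n$ and a single central unitary $V_{ij}$ subject to
\begin{itemize}
\item $U_iU_j=V_{ij}U_jU_i$;
\item $U_kU_l=U_lU_k$ whenever $\{k,l\}\neq\{i,j\}$;
\item $V_{ij}$ central.
\end{itemize}
Next I would match this against a tensor product presentation: $C^*(H_2)$ is the universal $C^*$-algebra generated by two unitaries $U,V$ and a central unitary $W$ with $UV=WVU$ (Remark \ref{rem-Heisenberg}), and $C(\TT^{n-2})$ is generated by $n-2$ commuting unitaries $T_k$ indexed by $k\in\{1,\ldots,n\}\setminus\{i,j\}$. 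The map sending $U\mapsto U_i$, $V\mapsto U_j$, $W\mapsto V_{ij}$, and $T_k\mapsto U_k$ (for $k\notin\{i,j\}$) extends to a $*$-homomorphism from $C^*(H_2)\otimes C(\TT^{n-2})$ into the above quotient, since the $U_k$ with $k\notin\{i,j\}$ are central and commute with $U_i,U_j,V_{ij}$, while the relations for $U_i,U_j,V_{ij}$ match those of $C^*(H_2)$. The inverse is obtained from the universal property of the quotient. This gives a canonical $*$-isomorphism of $C^*$-algebras.

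It remains to check that this isomorphism respects both the $C(\TT_{ij})$-bundle structure and the $\TT^n$-action. The bundle structure on the left is induced by the central unitary $V_{ij}$ (since after the quotient only $V_{ij}$ survives among the central generators indexed by $\Lambda_2\ZZ^n$), and on the right by the central $W\in C^*(H_2)$; the isomorphism sends $V_{ij}\mapsto W$, so the $C(\TT)$-structures agree. For the $\TT^n$-action, recall $\beta^n_{(z_1,\ldots,z_n)}(U_k)=z_kU_k$ and $\beta^n_z(V_{ij})=V_{ij}$ (central generators are fixed). Under the isomorphism, the $i$-th and $j$-th coordinates act on $U_i,U_j$ exactly as the standard $\TT^2$-action on $C^*(H_2)$, while the remaining coordinates act by rotation on the corresponding generators of $C(\TT^{n-2})$.

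The main obstacle is really just the bookkeeping in step one: one must verify that the ideal of $C^*(H_n)$ corresponding to the closed subset $\TT_{ij}\subset\widehat{Z}_n$ is indeed generated by $\{V_{kl}-1:(k,l)\neq(i,j)\}$. This follows from Lemma \ref{lem-pull1} combined with the identification $C^*(Z_n)\cong C(\widehat{Z}_n)$ under which the generators $V_{kl}$ correspond to the coordinate characters, so that $\Phi(C_0(\widehat{Z}_n\setminus\TT_{ij}))\cdot C^*(H_n)$ is precisely the closed two-sided ideal generated by $V_{kl}-1$ for $(k,l)\neq(i,j)$. Once this identification is in place, everything else reduces to a direct comparison of universal presentations.
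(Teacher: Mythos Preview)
Your proof is correct and takes a somewhat different, presentation-theoretic route from the paper's argument. The paper works on the group level: it identifies $C^*(H_n)|_{\TT_{ij}}$ with $C^*(H_n/Z_{ij})$, where $Z_{ij}=\{M\in Z_n: M_{ij}=0\}$, by arguing via irreducible representations that the kernel of the canonical map $C^*(H_n)\to C^*(H_n/Z_{ij})$ is exactly the ideal defining the restriction to $\TT_{ij}$; it then observes that the central extension $1\to Z_n/Z_{ij}\to H_n/Z_{ij}\to\ZZ^n\to 1$ is isomorphic (after permuting coordinates) to $1\to\ZZ\to H_2\times\ZZ^{n-2}\to\ZZ^n\to 1$, whence $C^*(H_n/Z_{ij})\cong C^*(H_2)\otimes C(\TT^{n-2})$.

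Your approach bypasses the group quotient entirely and works directly with the universal presentation of Corollary~\ref{cor-universal}: you identify the restriction ideal via Lemma~\ref{lem-pull1} as the one generated by $\{V_{kl}-1:(k,l)\neq(i,j)\}$, and then match the resulting presentation against that of $C^*(H_2)\otimes C(\TT^{n-2})$. The two arguments are of course closely related --- killing the generators $(e_{kl},0)$ of $Z_{ij}$ in the group is exactly setting $V_{kl}=1$ in the algebra --- but yours is more self-contained within the $C^*$-algebraic framework already set up in the paper, while the paper's version makes the group-theoretic origin of the isomorphism more transparent and avoids having to verify that adding relations to a universal $C^*$-algebra yields the expected quotient.
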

\begin{proof}
We actually prove that $C^*(H_n)|_{\TT_{ij}}$ is isomorphic to the 
$C^*$-group algebra of the group $H_n/Z_{ij}$, where 
$Z_{ij}:=\{M\in Z_n: M_{ij}=0\}$. Indeed, we have a canonical
quotient map 
$q: C^*(H_n)\to C^*(H_n/Z_{ij})$ and the kernel of this quotient map
is the intersection of all $C^*$-kernels of  the irreducible representations 
of $H_n/Z_{ij}$, viewed as representations of $H_n$.
But an arbitrary irreducible representation of $H_n$ restricts to 
the trivial representation of $Z_{ij}$ if and only if it restricts to a character 
in the basic circle $\TT_{ij}$ as described above. Thus it follows from the 
above mentioned basic facts  that $C^*(H_n)|_{\TT_{ij}}$ is isomorphic 
to $C^*(H_n/Z_{ij})$, and this isomorphism is clearly a non-commutative $\TT^n$-bundle
isomorphism if we equip $C^*(H_n/Z_{ij})$ with the $\TT^n$-bundle structure 
coming from the central extension 
$$ 1\to (\ZZ\cong Z_n/Z_{ij})\to H_n/Z_{ij}\to \ZZ^n\to 1.$$
The result then follows from the fact that the above extension is isomorphic to the extension
$$1\to \ZZ\to H_2\times\ZZ^{n-2}\to \ZZ^n\to 1$$
if we permute the $i$th and $j$th coordinate of $\ZZ^n$ in the first extension
to the first and second coordinate of $H_2$ in the second extension.
\end{proof}

We are now going to compute the action of 
$\pi_1\left(\TT^\frac{n(n-1)}{2}\right)\cong\ZZ^\frac{n(n-1)}{2}$ on
$K_*(C(\TT^n))$ arising from the
$K$-fibration $C^*(H_n)$  (note that
$C(\TT^n)$ is the fiber at $(1,\ldots,1)\in\TT^\frac{n(n-1)}{2}$ of
    $C^*(H_n)$).
    For this purpose, we will need an explicit  description of
    $K_0(C(\TT^n))$. We define for $i=1,\ldots,n$ the continuous
    functions 
$$u_i:\TT^n\to\CC;\, (z_1,\ldots,z_n)\mapsto z_i.$$ Let $(e_1,\ldots,e_n)$ be
    the canonical basis of $\ZZ^n$ and let $\Lambda_*(\ZZ^n)$ be the
    exterior algebra of $\ZZ^n$. 
\begin{lem}\label{lem-ext}
There is an isomorphism  of unital algebra  $\Lambda_*(\ZZ^n)\to K_*(C(\TT^n))$ uniquelly
defined on generators of $\ZZ^n$ by $e_i\mapsto [u_i]$.
\end{lem}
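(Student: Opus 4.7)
The plan is induction on $n$, based on the K\"unneth formula for K-theory of C*-algebras. For $n=1$, the standard computations give $K_0(C(\TT)) \cong \ZZ$ generated by $[1]$ and $K_1(C(\TT)) \cong \ZZ$ generated by $[u_1]$, so the map $\Lambda_*(\ZZ) \to K_*(C(\TT))$ with $e_1 \mapsto [u_1]$ is an isomorphism of graded algebras, noting that $[u_1]^2 = 0$ follows from graded commutativity of the product on $K_*(C(\TT))$ together with torsion-freeness of $\ZZ$.

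For the inductive step, write $C(\TT^n) \cong C(\TT^{n-1}) \otimes C(\TT)$ and apply the K\"unneth theorem of Rosenberg and Schochet. Since $K_*(C(\TT^{n-1}))$ is torsion-free by the inductive hypothesis, one obtains a natural isomorphism of $\ZZ/2$-graded rings
$$K_*(C(\TT^{n-1})) \otimes K_*(C(\TT)) \cong K_*(C(\TT^n)),$$
induced by the external product; in particular $K_*(C(\TT^n))$ is torsion-free. Graded commutativity then gives $[u_i]^2 = 0$ and $[u_i][u_j] = -[u_j][u_i]$ in $K_*(C(\TT^n))$, so the universal property of the exterior algebra yields a well-defined graded algebra map $\Phi \colon \Lambda_*(\ZZ^n) \to K_*(C(\TT^n))$ with $\Phi(e_i) = [u_i]$. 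Under the identification $\Lambda_*(\ZZ^n) \cong \Lambda_*(\ZZ^{n-1}) \otimes \Lambda_*(\ZZ)$ on the source and the K\"unneth isomorphism on the target, $\Phi$ decomposes as $\Phi_{n-1} \otimes \Phi_1$; both factors are isomorphisms by induction, so $\Phi$ is as well.

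The one point requiring care is the multiplicativity of the K\"unneth isomorphism, which is what allows the induction to transport the ring structure and not merely the underlying graded abelian group structure. This is the standard ring-theoretic refinement of Rosenberg--Schochet, valid here because both tensor factors have torsion-free K-theory, so that the K\"unneth short exact sequence degenerates into a multiplicative isomorphism built from the external product.
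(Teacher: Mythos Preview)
Your proof is correct and follows essentially the same approach as the paper: both arguments construct the map via the universal property of the exterior algebra (using skew-symmetry of the product on $K_*$) and then verify it is an isomorphism by induction on $n$, splitting $\Lambda_*(\ZZ^n)\cong\Lambda_*(\ZZ^{n-1})\otimes\Lambda_*(\ZZ)$ and $K_*(C(\TT^n))\cong K_*(C(\TT^{n-1}))\otimes K_*(C(\TT))$. Your write-up is in fact more careful than the paper's on two points the paper leaves implicit: the need for torsion-freeness to pass from $2[u_i]^2=0$ to $[u_i]^2=0$, and the multiplicativity of the K\"unneth isomorphism (via the external product) so that the induction respects ring structures and not merely graded groups.
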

\begin{proof}
Since the  product $$K_1(C(\TT^n))\times K_1(C(\TT^n))\to
K_0(C(\TT^n))$$ is skew-symetric and by the universal property  of the
exterior algebra,  the  morphism of abelian groups
 $$\ZZ^n\to K_1(C(\TT^n));\, e_i\mapsto [u_i]$$
 extends  in  a unique way to a morphism of unital $\ZZ_2$-graded algebras
$$\Lambda_*(\ZZ^n)\to K_*(C(\TT^n)).$$ 
This is clearly an isomorphism  for $n=1$. Since we have 
isomorphisms of  $\ZZ_2$-graded algebras $$\Lambda_*(\ZZ^n)\cong
\Lambda_*(\ZZ^{n-1})\otimes \Lambda_*(\ZZ^2)$$ and $$K_*(C(\TT^n))\cong
K_*(C(\TT^{n-1})\otimes K_*(C(\TT)),$$ the  result follows 
for all other positive integers $n$  by induction.\end{proof}

In what follows below we write $z_{k,l}$ for the $k,l$-th component of $\TT^\frac{n(n-1)}{2}$
for all $1\leq k<l\leq n$. 
Then $\pi_1\left(\TT^\frac{n(n-1)}{2}\right)\cong\ZZ^\frac{n(n-1)}{2}$
has generators $\{\gamma_{i,j}:=[\nu_{i,j}]: 1\leq i<j\leq n\}$ with 
$$\nu_{i,j}:[0,1]\to \TT^\frac{n(n-1)}{2}\quad (\nu_{i,j})_{(k,l)}(\theta)=
\left\{
\begin{array}{cc}
e^{2\pi i\theta}&\text{ if }(k,l)=(i,j)\\
1&\text{ else}
\end{array}\right\}.$$
Let $M_{i,j}$ denote the automorphism of the fibre $\Lambda_*(\ZZ^n)\cong K_*(C(\TT^n))$
on the $K$-theory bundle $\K_*(C^*(H_n))$
 at $(1,\ldots,1)\in \TT^\frac{n(n-1)}{2}$ 
given by the action of the generator $\gamma_{i,j}=[\nu_{i,j}]\in \pi_1(\TT^\frac{n(n-1}{2})$. 
It is  
 given by 
$\epsilon_{1,*}\circ \epsilon_{0,*}^{-1}$ where
$$\epsilon_{l}:\gamma^*_{i,j}(C^*(H_n))\longrightarrow
C^*(H_n)_{(1,\ldots,1)}\cong C(\TT^n)$$
 is the evaluation at $l=0,1$ of the $C[0,1]$-algebra
 $\nu^*_{i,j}(C^*(H_n))$.
Since we have the obvious  idenfication $$\nu^*_{i,j}(C^*(H_n))\cong
\nu^*_{i,j}(C^*(H_n)|_{\TT_{i,j}}),$$ 
where on the right hand side we identify $\nu_{i,j}$ with the standard positively oriented 
paramatrization of the basic circle $\TT_{i.j}$,
we obtain from  Lemma \ref{lem} a canonical isomorphism of $C[0,1]$-algebras
$$\nu^*_{i,j}(C^*(H_n))\cong \nu^*(C^*(H_2))\otimes C(\TT^{n-2}))$$ with
$$\nu:[0,1] \to  \TT;\, \theta\mapsto e^{2i\pi \theta}.$$
The automorphism $M_{i,j}$ is therefore induced by the action of the
positive generator of $\pi_1(\TT)\cong\ZZ$ on $C^*(H_2)$, and it is then straightforward 
to check:

\begin{prop}\label{prop-action-of-pi1}
For any ordered  subset $J=\{i_1<\cdots<i_k\}$ of $\{1,\ldots,n\}$,
we set $$e_J=e_{i_1}\wedge\cdots \wedge e_{i_k}\in
\Lambda_*(\ZZ^n),$$ and for $i,j\in J$ such that $i<j$ we denote by $m_{J,i,j}$ the
 number of places between $i$ and $j$ in $J$. Then 
$$M_{i,j}\cdot
e_J=\left\{\begin{matrix} e_J{+}(-1)^{m_{J,i,j}}e_J{J\setminus\{i,j\}}, & \text{if $\{i,j\}\subset J$}\\
e_J & \text{else}\end{matrix}\right\}.$$
\end{prop}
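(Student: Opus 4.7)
The plan is to reduce to the two-dimensional case treated in \S\ref{sec-dim2}. By Lemma \ref{lem}, there is a canonical identification
$$\nu_{i,j}^*(C^*(H_n)) \cong \nu^*(C^*(H_2)) \otimes C(\TT^{n-2})$$
of C*-algebra bundles over $[0,1]$, where $\nu(\theta) = e^{2\pi i \theta}$. Under this identification the evaluation maps $\epsilon_l$ at $l = 0, 1$ factor as $\epsilon_l^{(2)} \otimes \id$, so via the K\"unneth isomorphism $K_*(C(\TT^n)) \cong K_*(C(\TT^2)) \otimes K_*(C(\TT^{n-2}))$, the automorphism $M_{i,j} = \epsilon_{1,*} \circ \epsilon_{0,*}^{-1}$ equals $M \otimes \id$, where $M$ is the automorphism of $K_*(C(\TT^2))$ associated with $C^*(H_2)$ and the standard loop $\nu$, computed in \S\ref{sec-dim2}.

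From \S\ref{sec-dim2} I can read off that $M$ fixes the classes $[1]$, $[u_i]$, $[u_j]$ (corresponding to $1$, $e_i$, $e_j$ in $\Lambda_*(\ZZ e_i \oplus \ZZ e_j)$) and sends the Bott element $e_i \wedge e_j$ to $1 + e_i \wedge e_j$. Next, via Lemma \ref{lem-ext}, the K\"unneth splitting of $K_*(C(\TT^n))$ corresponds to the algebra decomposition $\Lambda_*(\ZZ^n) \cong \Lambda_*(\ZZ^2) \otimes \Lambda_*(\ZZ^{n-2})$, where the first factor is generated by $e_i, e_j$ and the second by $\{e_l : l \neq i, j\}$.

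For a subset $J \subseteq \{1, \ldots, n\}$, set $S = J \cap \{i,j\}$ and $T = J \setminus \{i,j\}$. After reordering so that the $e_i, e_j$ factors (if present) appear first, one obtains $e_J = \epsilon_J \, (e_S \wedge e_T)$ for some sign $\epsilon_J \in \{\pm 1\}$. If $\{i,j\} \not\subset J$, then $e_S$ is one of $1$, $e_i$, $e_j$, each fixed by $M$, giving $M_{i,j}(e_J) = e_J$. If $\{i,j\} \subset J$, applying $M \otimes \id$ and using $M(e_i \wedge e_j) = 1 + e_i \wedge e_j$ gives
$$M_{i,j}(e_J) = \epsilon_J \, (1 + e_i \wedge e_j) \wedge e_T = e_J + \epsilon_J \, e_{J \setminus \{i,j\}}.$$

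The hard part is the sign bookkeeping, i.e., verifying $\epsilon_J = (-1)^{m_{J,i,j}}$ when $\{i,j\} \subset J$. Writing $J = \{j_1 < \cdots < j_k\}$ with $j_a = i$ and $j_b = j$, I will count inversions of the permutation taking $(i, j, t_1, \ldots, t_{k-2})$ (with the $t_\ell$ in increasing order) to $(j_1, \ldots, j_k)$: the element $i$ in position $1$ contributes $a - 1$ inversions with the $a-1$ elements of $T$ smaller than $i$, and $j$ in position $2$ contributes $b - 2$ inversions with the elements of $T$ smaller than $j$. Since $(a-1) + (b-2) = a + b - 3$ and $m_{J,i,j} = b - a - 1$ differ by the even integer $2a - 2$, one concludes $\epsilon_J = (-1)^{m_{J,i,j}}$, which finishes the argument.
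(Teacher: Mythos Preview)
Your proof is correct and follows exactly the approach the paper sets up: the paragraph preceding the proposition already establishes the isomorphism $\nu_{i,j}^*(C^*(H_n))\cong \nu^*(C^*(H_2))\otimes C(\TT^{n-2})$ via Lemma~\ref{lem} and then simply declares the result ``straightforward to check''. You have supplied precisely that check, including the sign bookkeeping the paper omits, so nothing further is needed.
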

Notice that since $\pi_1\left(\TT^\frac{n(n-1)}{2}\right)$ is
  commutative, the matrices $M_{i,j}$ commute with each other!
\begin{cor}\label{cor-winding}
The representation  $$M:\pi_1\left(\TT^\frac{n(n-1)}{2}\right)\longrightarrow
{GL}(K_*(C(\TT^n)))\cong {GL}(\Lambda_*(\ZZ^n)) $$ arising from the 
NCP $\TT^n$-bundle $C^*(H_n)\big(\TT^\frac{n(n-1)}{2}\big)$ 
is injective.
\end{cor}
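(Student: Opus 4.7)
The plan is to use Proposition \ref{prop-action-of-pi1} directly: the generators $M_{i,j}$ of the image of $M$ commute with each other, so a typical element of $\pi_1(\TT^{n(n-1)/2})\cong\ZZ^{n(n-1)/2}$ is sent to a product $\prod_{i<j}M_{i,j}^{k_{i,j}}$, and we need to show that this product is the identity only when all exponents $k_{i,j}$ vanish. The strategy is to evaluate this product on a cleverly chosen element of $\Lambda_*(\ZZ^n)$ which isolates one particular exponent at a time.

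Fix a pair $i<j$ and consider the class $e_{\{i,j\}}=e_i\wedge e_j\in\Lambda_*(\ZZ^n)$. According to Proposition \ref{prop-action-of-pi1}, for $(k,l)\neq(i,j)$ with $k<l$, we have $\{k,l\}\not\subset\{i,j\}$ (since both are two-element sets), so $M_{k,l}\cdot e_{\{i,j\}}=e_{\{i,j\}}$. Moreover, $M_{k,l}\cdot 1=1$ for every pair $(k,l)$, since $\{k,l\}\not\subset\emptyset$. Therefore, when we apply $\prod_{i<j}M_{i,j}^{k_{i,j}}$ to $e_{\{i,j\}}$, all factors other than $M_{i,j}^{k_{i,j}}$ act as the identity on everything that appears.

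It remains to compute $M_{i,j}^{k_{i,j}}\cdot e_{\{i,j\}}$. Since $m_{\{i,j\},i,j}=0$, Proposition \ref{prop-action-of-pi1} gives $M_{i,j}\cdot e_{\{i,j\}}=e_{\{i,j\}}+1$, and combined with $M_{i,j}\cdot 1=1$ a straightforward induction on $k$ yields
\begin{equation*}
M_{i,j}^{k}\cdot e_{\{i,j\}}=e_{\{i,j\}}+k\cdot 1
\end{equation*}
for every integer $k$. Therefore, if $\prod_{i<j}M_{i,j}^{k_{i,j}}=\Id$, evaluating at $e_{\{i,j\}}$ gives $e_{\{i,j\}}=e_{\{i,j\}}+k_{i,j}\cdot 1$ in $\Lambda_*(\ZZ^n)$, so $k_{i,j}=0$. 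Since $(i,j)$ was arbitrary, all exponents vanish, which proves injectivity.

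There is essentially no obstacle here: the entire proof is a direct consequence of the explicit formula in Proposition \ref{prop-action-of-pi1}. The only thing to verify carefully is that one can indeed separate the contributions of the various $M_{k,l}$ by testing against $e_{\{i,j\}}$, which follows from the combinatorial observation that a two-element set is contained in another two-element set only if they are equal.
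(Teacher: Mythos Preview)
Your proof is correct and follows essentially the same approach as the paper: both evaluate the product $\prod_{i<j}M_{i,j}^{k_{i,j}}$ on the degree-two elements $e_i\wedge e_j$ and read off the exponent $k_{i,j}$ from the constant term. Your version is somewhat more explicit in verifying that the other factors $M_{k,l}$ act trivially on $e_{\{i,j\}}$ and on $1$, whereas the paper simply asserts the resulting formula $M\big((n_{i,j})\big)\cdot e_k\wedge e_l = e_k\wedge e_l + n_{k,l}$.
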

\begin{proof}
Let $(n_{i,j})_{1\leq i<j\leq n}$ be an element of
$\pi_1\left(\TT^\frac{n(n-1)}{2}\right)\cong \ZZ^\frac{n(n-1)}{2}$. Then the
action of $(n_{i,j})_{1\leq i<j\leq n}$ on $K_*(C(\TT^n))$ is given 
by $$M\big((n_{i,j})_{1\leq i<j\leq n}\big)=\prod_{1\leq i<j\leq n}
M_{i,j}^{n_{i,j}}.$$ 
We have  $$M\big((n_{i,j})_{1\leq i<j\leq n}\big)\cdot e_k\wedge
  e_l= e_k\wedge
  e_l{+}n_{k,l}$$  for any
  $(k,l)$ with $1\leq k<l\leq n$. Therefore, $M\big((n_{i,j})_{1\leq i<j\leq n}\big)$ 
  is the identity map if and only if
  $n_{i,j}=0$ for every $1\leq i<j\leq n$.
\end{proof}
Let us now denote for $z\in \TT^\frac{n(n-1)}{2}$ by   $M_{i,j,z}$ the
action of the oriented  generator of
$\pi_1\left(\TT^\frac{n(n-1)}{2}\right)\cong \ZZ^\frac{n(n-1)}{2}$ on $K_*((C^*(H_n)_z))$
corresponding to $1\leq i<j\leq n$ and by
$$M_z:\pi_1\left(\TT^\frac{n(n-1)}{2}\right)\to{GL}(K_*(C^*(H_n)_z))$$ the
action of $\pi_1\left(\TT^\frac{n(n-1)}{2}\right)$ on $K_*(C^*(H_n)_z)$.
Since $M_z$ and $M$ are conjugate, we see that $M_z$ is also injective
for any $z\in \TT^\frac{n(n-1)}{2}$. 
\vspace{0.5cm}

We now describe for a
continuous map $f:X\to \TT^\frac{n(n-1)}{2}$, with $X$ a path connected locally compact space,
the action
$$M^f_x:\pi_1(X)\to \operatorname{GL}(K_*(C^*(H_n)_{f(x)}))$$
 arising for any $x\in X$  from the
$K$-fibration $f^*(C^*(H_n))$. We proceed as we did in the proof of
Proposition \ref{prop-pi1}.  For $\gamma\in\pi_1(X)$ represented by a
continuous map $g:\TT\to X$, the loop  $f\circ g:\TT\to
\TT^\frac{n(n-1)}{2}$ is homotopic to $\left(\nu^{\langle
  f_{i,j},\gamma\rangle}_{i,j}\right)_{1\leq i<j\leq n}$, where 
  $$f(x)=(f_{i,j}(x))_{1\leq i<j\leq n}\in\TT^\frac{n(n-1)}{2}$$
for any $x\in X$ (and the parametrisation  $\nu_{ij}$ of the basic circle
$\TT_{ij}$ is viewed as a function $\nu_{ij}:\TT\to \TT^\frac{n(n-1)}{2}$). Thus we obtain
\begin{equation}\label{eq-action}
M^f_x(\gamma)=\prod_{1\leq i<j\leq n} M_{i,j,f(x)}^{\langle
  f_{i,j},\gamma\rangle}.
  \end{equation}
    Since for any continuous functions
$h_j:X\to\TT,\,j=1,2$ and any $\gamma\in\pi_1(X)$, we have $${\langle
  h_{1}h_{2},\gamma\rangle}=\langle
  h_{1},\gamma\rangle+\langle
  h_{2},\gamma\rangle,$$ we get
\begin{lem}\label{lem-prod}
Let $f_1, f_2:X\to
\TT^\frac{n(n-1)}{2}$ be continuous functions such that
$f_1(x)=f_2(x)=(1,\ldots,1)\in \TT^\frac{n(n-1)}{2}$ 
for some fixed base point $x\in X$. Then,
for any $\gamma\in\pi_1(X)$, we have $$M^{f_1\cdot f_2}_{x}(\gamma)=M^{f_1}_{x}(\gamma)\circ
M^{f_2}_{x}(\gamma)$$ in  ${GL}(K_*(C(\TT^n)))$.
\end{lem}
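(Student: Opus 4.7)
My plan is to reduce Lemma \ref{lem-prod} to a straightforward calculation using formula (\ref{eq-action}) together with the additivity of winding numbers and the fact that the automorphisms $M_{i,j,z}$ commute with each other. First I would observe that since $f_1(x)=f_2(x)=(1,\ldots,1)$, the product $(f_1\cdot f_2)(x)$ (pointwise multiplication in the abelian group $\TT^{\frac{n(n-1)}{2}}$) is also equal to $(1,\ldots,1)$, so that all three expressions $M^{f_1}_x(\gamma)$, $M^{f_2}_x(\gamma)$, and $M^{f_1\cdot f_2}_x(\gamma)$ are automorphisms of the \emph{same} group $K_*(C^*(H_n)_{(1,\ldots,1)})\cong K_*(C(\TT^n))$, so composition on the right-hand side makes sense.

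Next I would unwind the formula. For each pair $1\leq i<j\leq n$, the $(i,j)$-coordinate of $f_1\cdot f_2$ is the pointwise product $(f_1)_{i,j}\cdot(f_2)_{i,j}:X\to \TT$. By the additivity of winding numbers recalled just before the statement,
$$\langle (f_1\cdot f_2)_{i,j},\gamma\rangle = \langle (f_1)_{i,j},\gamma\rangle + \langle (f_2)_{i,j},\gamma\rangle$$
for every $\gamma\in\pi_1(X)$. Applying formula (\ref{eq-action}) to $f_1\cdot f_2$ therefore gives
$$M^{f_1\cdot f_2}_x(\gamma) = \prod_{1\leq i<j\leq n} M_{i,j,(1,\ldots,1)}^{\langle (f_1)_{i,j},\gamma\rangle + \langle (f_2)_{i,j},\gamma\rangle}.$$

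The final step is to split each factor using $M_{i,j,z}^{a+b}=M_{i,j,z}^a\circ M_{i,j,z}^b$ and then rearrange the product. Here I would invoke the remark following Proposition \ref{prop-action-of-pi1}, namely that all the automorphisms $M_{i,j,z}$ commute with each other (they arise from a representation of the abelian group $\pi_1(\TT^{\frac{n(n-1)}{2}})\cong \ZZ^{\frac{n(n-1)}{2}}$). This commutativity lets me freely reorder the factors to regroup all exponents coming from $f_1$ together and all exponents coming from $f_2$ together, yielding
$$\left(\prod_{1\leq i<j\leq n}M_{i,j,(1,\ldots,1)}^{\langle (f_1)_{i,j},\gamma\rangle}\right)\circ\left(\prod_{1\leq i<j\leq n}M_{i,j,(1,\ldots,1)}^{\langle (f_2)_{i,j},\gamma\rangle}\right) = M^{f_1}_x(\gamma)\circ M^{f_2}_x(\gamma).$$

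I do not anticipate any serious obstacle here: the statement is essentially a bookkeeping consequence of the previously established explicit formula (\ref{eq-action}). The only point that requires a moment's care is the commutativity of the $M_{i,j,z}$, which is needed to split exponents and rearrange the product; this was noted right after Proposition \ref{prop-action-of-pi1} and is ultimately a reflection of the fact that $\pi_1(\TT^{\frac{n(n-1)}{2}})$ is abelian.
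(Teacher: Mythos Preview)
Your proposal is correct and follows exactly the same approach as the paper: the paper's proof consists of a single sentence invoking the additivity of winding numbers $\langle h_1h_2,\gamma\rangle=\langle h_1,\gamma\rangle+\langle h_2,\gamma\rangle$ and implicitly formula (\ref{eq-action}), and you have simply spelled out the details (including the commutativity of the $M_{i,j}$ needed to regroup the product) that the paper leaves implicit.
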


We close this section by an explicit description of the action of $\pi_1(X)$ in the 
case $n=3$ if the fibre at $x$ is $C(\TT^3)$:

\begin{lem}\label{lem-dim3}
Let $A(X)$ be an NCP $\TT^3$-bundle over the path connected space $X$
with classification data $([q:Y\to X], f)$ 
such that $f(x)=1$ for some fixed $x\in X$. Choose 
$\{1, e_1\wedge e_2, e_2\wedge e_3, e_1\wedge e_3\}$  as a basis for the even part and 
$\{e_1, e_2, e_3, e_1\wedge e_2\wedge e_3\}$ as a basis for the odd part of
$K_*(C(\TT^3))=\Lambda_*(\ZZ^3)$.
Then the action of $\gamma\in \pi_1(X)$ on the even and odd parts of $K_*(C(\TT))=\Lambda_*(\ZZ^3)$ is  given with respect to those bases by the matrices {
$$\left(\begin{matrix} 1 &  \langle f_{12},\gamma\rangle &\langle f_{23},\gamma\rangle & \langle f_{13},\gamma\rangle\\
0&1&0&0\\
0&0&1&0\\
0&0&0&1\end{matrix}\right)\quad\text{and}\quad
\left(\begin{matrix} 1 &  0 & 0& \langle f_{23},\gamma\rangle\\
0&1&0&-\langle f_{13},\gamma\rangle\\
0&0&1&\langle f_{12},\gamma\rangle \\
0&0&0&1\end{matrix}\right),$$}
respectively.
\end{lem}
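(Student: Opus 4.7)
The plan is to reduce the assertion to a direct application of the general formula \eqref{eq-action} combined with the explicit description of the generators $M_{i,j}$ given in Proposition \ref{prop-action-of-pi1}.

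First, I would use Lemma \ref{lem-group-twisted-bundle} to dispose of the principal bundle factor. Writing $A(X)=Y*f^{*}(C^{*}(H_{3}))(X)$ and choosing a path $\mu:[0,1]\to X$ that represents any loop at $x$, the lemma gives a $C[0,1]$-isomorphism $\mu^{*}(Y*f^{*}(C^{*}(H_{3})))\cong \mu^{*}(f^{*}(C^{*}(H_{3})))$ compatible with evaluations at the endpoints; consequently the $\pi_{1}(X)$-action on $K_{*}(A_{x})=K_{*}(C(\TT^{3}))$ coincides with the one arising from $f^{*}(C^{*}(H_{3}))(X)$.

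Next, since $f(x)=1\in\TT^{3}$, the fibre at $x$ is $C^{*}(H_{3})_{1}\cong C(\TT^{3})$, and the canonical automorphisms $M_{i,j,f(x)}$ coincide with the matrices $M_{i,j}$ computed on the universal bundle $C^{*}(H_{3})$ in Proposition \ref{prop-action-of-pi1}. Invoking \eqref{eq-action}, for every $\gamma\in\pi_{1}(X)$
\[
M^{f}_{x}(\gamma)=M_{12}^{\langle f_{12},\gamma\rangle}\,M_{13}^{\langle f_{13},\gamma\rangle}\,M_{23}^{\langle f_{23},\gamma\rangle},
\]
and the factors commute by the remark following Corollary \ref{cor-winding}.

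It then remains a routine verification, using the formula of Proposition \ref{prop-action-of-pi1} with $n=3$, to write out each $M_{i,j}$ in the chosen bases. For the even part one finds $M_{i,j}(1)=1$, $M_{i,j}(e_{k}\wedge e_{l})=e_{k}\wedge e_{l}$ unless $\{i,j\}=\{k,l\}$, in which case $M_{i,j}(e_{i}\wedge e_{j})=e_{i}\wedge e_{j}+1$ (since the sign exponent $m_{J,i,j}$ vanishes when $J=\{i,j\}$). For the odd part, $M_{i,j}$ fixes each $e_{k}$, while on $e_{1}\wedge e_{2}\wedge e_{3}$ one obtains the three corrections $+e_{3}$, $-e_{2}$, $+e_{1}$ for $(i,j)=(1,2),(1,3),(2,3)$, the minus sign coming from $m_{\{1,2,3\},1,3}=1$. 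Because each $M_{i,j}-\operatorname{Id}$ is a rank-one nilpotent whose image lies in the kernel of every other $M_{k,l}-\operatorname{Id}$, the exponent $\langle f_{i,j},\gamma\rangle$ simply multiplies the off-diagonal entry contributed by $M_{i,j}$, and the commuting product collapses to the two $4\times 4$ matrices claimed in the lemma. No step here is truly a main obstacle: the only point requiring care is keeping track of the sign $(-1)^{m_{J,i,j}}$ in the odd-degree action, which is what produces the $-\langle f_{13},\gamma\rangle$ entry.
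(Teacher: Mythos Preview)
Your proposal is correct and takes essentially the same approach as the paper, which merely states that the result follows from a straightforward computation using Proposition~\ref{prop-action-of-pi1}. You have simply made explicit the two natural reduction steps (Lemma~\ref{lem-group-twisted-bundle} to drop the principal-bundle factor, and formula~\eqref{eq-action} to express the action as a product of the $M_{i,j}$) and then carried out the computation with proper attention to the sign $(-1)^{m_{J,i,j}}$.
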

\begin{proof} This follows from a straightforward computation using 
Proposition \ref{prop-action-of-pi1}.
\end{proof}

\section{$\RKK$-equivalence of NCP torus bundles and the K-theory bundle}\label{RKK}

In this section we want to study $\RKK$-equivalence of NCP $\TT^n$-bundles 
$A(X)$ and $B(X)$ in terms 
of the K-theory bundles $\K_*(A(X))$ and $\K_*(B(X))$. Of course, one cannot expect a 
complete result because for this one would expect to need a much finer invariant.
However, as it turns out below, some partial answers can be given. In particular, we can see from 
the K-theory bundle whether $A(X)$ is $\RKK$-equivalent to a commutative principal bundle 
over $X$. We start with

\begin{prop}\label{prop-inj}
Let $X$ be a path connected locally compact space, and
let $f:X\to \TT^\frac{n(n-1)}{2}$ be a continuous map. Then, for all $x\in X$,  the action
$$M^f_x:\pi_1(X)\to{GL}(K_*(C^*(H_n)_{f(x)}))$$
 arising   from the NCP $\TT^n$-bundle  $f^*(C^*(H_n))$ is trivial if and only if $f$ is
homotopic to a constant map.
\end{prop}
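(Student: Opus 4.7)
My plan is as follows. The ``if'' direction is immediate from equation~(\ref{eq-action}): if $f$ is homotopic to a constant map, each winding number $\langle f_{i,j},\gamma\rangle$ vanishes, so $M^f_x(\gamma)$ equals the identity for every $\gamma\in\pi_1(X)$.

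For the converse, my strategy is to rewrite equation~(\ref{eq-action}) as $M^f_x(\gamma)=M_{f(x)}(f_\#(\gamma))$, where $f_\#:\pi_1(X)\to\pi_1\big(\TT^{\frac{n(n-1)}{2}}\big)\cong\ZZ^{\frac{n(n-1)}{2}}$ is the homomorphism induced by $f$ on fundamental groups, whose $(i,j)$-coordinate is exactly $\langle f_{i,j},\gamma\rangle$, and $M_{f(x)}$ is the representation of $\pi_1\big(\TT^{\frac{n(n-1)}{2}}\big)$ associated with the universal bundle $C^*(H_n)$ at $f(x)$. I would then invoke Corollary~\ref{cor-winding}, which gives injectivity of $M=M_{(1,\ldots,1)}$; combined with the conjugacy remark immediately following that corollary (path-connectedness of the base $\TT^{\frac{n(n-1)}{2}}$ conjugates $M_{f(x)}$ to $M$), this yields injectivity of $M_{f(x)}$ as well. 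Hence the assumed triviality of $M^f_x$ forces $f_\#=0$.

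The final step, which I expect to be the main point to handle cleanly, is to promote the vanishing of $f_\#$ to a null-homotopy of $f$. This is a classical statement: since $\TT^{\frac{n(n-1)}{2}}$ is the Eilenberg--MacLane space $K\big(\ZZ^{\frac{n(n-1)}{2}},1\big)$, the set $\big[X,\TT^{\frac{n(n-1)}{2}}\big]$ is naturally in bijection with $H^1\big(X;\ZZ^{\frac{n(n-1)}{2}}\big)\cong\operatorname{Hom}\big(\pi_1(X),\ZZ^{\frac{n(n-1)}{2}}\big)$, under which the class $[f]$ corresponds to $f_\#$; so $f_\#=0$ gives $[f]=0$. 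The mild regularity needed for this classification (paracompactness or having the homotopy type of a CW complex) is implicit in the standing hypotheses on $X$; alternatively one can argue coordinate-wise, reducing everything to the elementary statement that a continuous map $X\to\TT=S^1$ is null-homotopic precisely when its degree along every loop vanishes.
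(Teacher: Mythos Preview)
Your proposal is correct and follows essentially the same route as the paper's proof. Both directions rest on equation~(\ref{eq-action}) together with the injectivity of $M_{f(x)}$ (Corollary~\ref{cor-winding} and the conjugacy remark after it), and both finish by invoking that $\TT$ (coordinate-wise in the paper, $\TT^{\frac{n(n-1)}{2}}$ at once in your version) is a $K(\ZZ,1)$, so that a map into it is null-homotopic exactly when the induced map on $\pi_1$ vanishes; the paper phrases this last step as $[X,\TT]\cong H^1(X,\ZZ)\cong H_1(X,\ZZ)^*$, which is the same content as your $\operatorname{Hom}\big(\pi_1(X),\ZZ^{\frac{n(n-1)}{2}}\big)$ identification.
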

\begin{proof} We denote by $[X,\TT]$ the set of homotopy classes of
  continuous functions from $X$ to $\TT$. The pointwise product of
  functions provides an abelian group structure on  $[X,\TT]$. If  $h:X\to\TT$ is a
  continuous functions, we denote by $[h]$ its homotopy class. Recall
  that if
  $[\TT]$ is the oriented generator of $H^1(\TT,\ZZ)\cong\ZZ$, then
  the map $$[h]\mapsto h^*([\ZZ])$$ induces  a group  isomorphism 
$$[X,\TT]\cong H^1(\TT,\ZZ)$$ (because $\TT$ is a $K(\ZZ,1)$-space).
If $\gamma$ is an element of $\pi_1(X)$, then we denote
  by $[\gamma]$ its class in $$H_1(X,\ZZ)\cong
  \pi_1(X)/[\pi_1(X),\pi_1(X)].$$ Under these notations, the number
$\langle h,\gamma\rangle$ is given by the pairing
$\langle [h],[\gamma]\rangle$ between a homology and a cohomology
class.   If $f:X\to \TT^\frac{n(n-1)}{2}$ is homotopic to the constant $c = f(x)$,
then $\langle [f_{i,j}],[\gamma]\rangle=0$  for any $1\leq
i<j\leq n$ and thus
$M^f_x(\gamma)$ is the identity map  on
$K_*(C^*(H^n)_{f(x)})$. Conversally,
if $M^f_x$ is the trivial morphism, this means that 
$\langle [f_{i,j}],[\gamma]\rangle=0$ for any $\gamma\in\pi_1(X)$ and
thus $\langle [f_{i,j}],\omega\rangle=0$ for any $\omega\in
H_1(X,\ZZ)$. Since $$H^1(X,\ZZ)\to  H_1(X,\ZZ)^*;\, \alpha\mapsto\langle
\alpha,\bullet\rangle$$ is an isomorphism, the map $f_{i,j}$ is
  homotopic to a constant map for any  $1\leq
i<j\leq n$ and therefore so is $f$.
\end{proof}

The following theorem is now a direct consequence of the above proposition
together with {Remark \ref{rem-KK}} and Lemma \ref{lem-group-twisted-bundle} :

\begin{thm} Let  $A(X)$ 
  be an NCP $\TT^n$-bundles
 over  the path connected space $X$ and let $f:X\to
  \TT^{\frac{n(n-1)}{2}}$  be the continuous map
  associated to $A(X)$  as
in Theorem \ref{thm-T-bundle}. Then the following are equivalent:
\begin{enumerate}
\item $f$ is homotopic to a constant map.
\item {The K-theory 
bundle  $\K_*(A(X))$ is trivial.}
\item $A(X)$ is $\RKK(X;\cdot,\cdot)$-equivalent to $C_0(Y)$ for 
a (commutative) principal $\TT^n$-bundle $q:Y\to X$.
\end{enumerate}
\end{thm}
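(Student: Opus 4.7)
The plan is to prove the cycle (i)$\Rightarrow$(iii)$\Rightarrow$(ii)$\Rightarrow$(i), reducing each implication to one of the three tools invoked by the authors (Proposition~\ref{prop-inj}, Remark~\ref{rem-KK}, and Lemma~\ref{lem-group-twisted-bundle}).

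For (i)$\Rightarrow$(iii), I first observe that since $\TT^{\frac{n(n-1)}{2}}$ is path connected, any null-homotopic $f$ is in fact homotopic to the constant map $c\equiv(1,\ldots,1)$. Given a homotopy $F:[0,1]\times X\to \TT^{\frac{n(n-1)}{2}}$ from $f$ to $c$, and letting $p:[0,1]\times X\to X$ denote the projection, I form the NCP $\TT^n$-bundle $p^*Y * F^*(C^*(H_n))([0,1]\times X)$ over $[0,1]\times X$; its restrictions at the two endpoints are $A(X)=Y*f^*(C^*(H_n))(X)$ and $Y*C_0(X,C(\TT^n))$, so Corollary~\ref{cor-homotop} yields $\RKK(X;\cdot,\cdot)$-equivalence of the two. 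A direct computation then identifies the fixed-point algebra $Y*C_0(X,C(\TT^n)) = C_0(Y\times\TT^n)^{\TT^n}$ with $C_0(Y)$: the diagonal $\TT^n$-action sends $F(y,t)$ to $F(gy,g^{-1}t)$, so any invariant satisfies $F(y,t)=F(ty,1)$ and is thereby determined by its restriction to $Y\times\{1\}\cong Y$.

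For (iii)$\Rightarrow$(ii), Remark~\ref{rem-KK} guarantees that $\RKK$-equivalence induces an isomorphism of $K$-theory group bundles, so it suffices to check that $\mathcal K_*(C_0(Y))$ is trivial. But with $f\equiv(1,\ldots,1)$ in Theorem~\ref{thm-T-bundle} we have $C_0(Y)\cong Y*C_0(X,C(\TT^n))$ (same fixed-point computation as above), and Lemma~\ref{lem-group-twisted-bundle} identifies $\mathcal K_*(C_0(Y))$ with the $K$-theory bundle of the trivial bundle $C_0(X,C(\TT^n))=C_0(X)\otimes C(\TT^n)$, which is manifestly trivial. For (ii)$\Rightarrow$(i), Lemma~\ref{lem-group-twisted-bundle} again identifies $\mathcal K_*(A(X))$ with $\mathcal K_*(f^*(C^*(H_n))(X))$; since $X$ is path connected, triviality of this bundle is equivalent to triviality of the induced $\pi_1(X)$-action on any fiber, and Proposition~\ref{prop-inj} then forces $f$ to be homotopic to a constant.

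The main point requiring care is the fixed-point identification $Y*C_0(X,C(\TT^n))\cong C_0(Y)$ used in both (i)$\Rightarrow$(iii) and (iii)$\Rightarrow$(ii), together with the verification that $p^*Y * F^*(C^*(H_n))([0,1]\times X)$ is actually a NCP $\TT^n$-bundle (so that Corollary~\ref{cor-homotop} legitimately applies). Both are routine once set up correctly; everything else is an essentially formal application of results already in the paper, consistent with the authors' framing of the theorem as a direct consequence of the three cited tools.
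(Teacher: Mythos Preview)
Your proof is correct and follows essentially the same route as the paper, which merely cites Proposition~\ref{prop-inj}, Remark~\ref{rem-KK}, and Lemma~\ref{lem-group-twisted-bundle} without spelling out the logic; you have supplied the details the authors leave implicit, in particular the fixed-point identification $Y*C_0(X,C(\TT^n))\cong C_0(Y)$ and the explicit homotopy $p^*Y*F^*(C^*(H_n))$ needed to invoke Corollary~\ref{cor-homotop} for (i)$\Rightarrow$(iii). The only minor imprecision is writing $A(X)=Y*f^*(C^*(H_n))(X)$ rather than $A(X)\sim_M Y*f^*(C^*(H_n))(X)$, but since $C_0(X)$-linear Morita equivalence implies $\RKK$-equivalence this does not affect the argument.
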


In what follows next, we want to study whether the above result can be extended
in order to compare two NCP $\TT^n$-bundles $A(X)$ and $B(X)$ up to
twisting by suitable commutative $\TT^n$-bundles. Recall from Section 2
that if $A(X)$ is a NCP $\TT^n$-bundle and if $\Psi\in GL_n(\ZZ)$, then 
$A_{\Psi}(X)$ denotes the NCP $\TT^n$-bundle which we obtain from $A(X)$ 
by composing the $\TT^n$-action with $\Psi^{-1}$. Since the underlying C*-algebra 
bundle is not changed at all, and since our study of $\RKK$-equivalence is 
{\bf after} forgetting the $\TT^n$-action, we see that $A(X)$ and$A_{\Psi}(X)$
are $\RKK$-equivalent since they are equal after forgetting the action.

In terms of the classification of NCP $\TT^n$-bundles of Section 2, we 
can therefore deduce from Theorem \ref{thm-T-bundle-action} that the 
bundles corresponding to the pairs
$$([q:Y\to X], f)\quad\text{and}\quad ([q_{\Psi}:Y_{\Psi}\to X], \Lambda_2\Psi\circ f)$$
are always $\RKK$-equivalent (in fact they are $C_0(X)$-Morita equivalent).
More precisely, the representatives
$$Y*(f^*C^*(H_n))(X)\quad \text{and}\quad Y_{\Psi}*((\Lambda_2\Psi\circ f)^*(C^*(H_n)))(X)$$
are isomorphic C*-algebra bundles over $X$. 
We notice the following fact

\begin{lem}\label{lem-image}
The morphism $GL_n(\ZZ^n)\to
GL_n(\Lambda_2(\ZZ^n));\,\Psi\mapsto\Lambda_2\Psi$ is surjective for
$n=2$ and has range $SL(\Lambda_2(\ZZ^3))$ for $n=3$.
\end{lem}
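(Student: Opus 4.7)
The plan is to treat each case separately by computing the map $\Psi \mapsto \Lambda_2\Psi$ explicitly in terms of standard bases of $\Lambda_2(\ZZ^n)$.

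For $n=2$: The exterior square $\Lambda_2(\ZZ^2)$ is free of rank one with generator $e_1\wedge e_2$, so $GL(\Lambda_2(\ZZ^2))\cong\{\pm 1\}$. A direct computation gives $\Lambda_2\Psi(e_1\wedge e_2)=\Psi(e_1)\wedge\Psi(e_2)=\det(\Psi)\,e_1\wedge e_2$, so the map under consideration is just the determinant $\det:GL_2(\ZZ)\to\{\pm 1\}$, which is obviously surjective (e.g.\ $\det(\operatorname{diag}(1,-1))=-1$).

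For $n=3$: Fix the basis $\{e_2\wedge e_3,\,-e_1\wedge e_3,\,e_1\wedge e_2\}$ for $\Lambda_2(\ZZ^3)$, which identifies $\Lambda_2(\ZZ^3)\cong\ZZ^3$ via the Hodge-type isomorphism $\star$ determined by the orientation form $e_1\wedge e_2\wedge e_3$. A direct calculation (expanding $\Psi(e_i)\wedge\Psi(e_j)$ in this basis) shows that under this identification $\Lambda_2\Psi$ corresponds to the cofactor matrix $\operatorname{cof}(\Psi)=\det(\Psi)\,({}^t\!\Psi)^{-1}$. In particular
\[
\det(\Lambda_2\Psi)=\det(\Psi)^2=1\qquad\text{for every }\Psi\in GL_3(\ZZ),
\]
so the image is contained in $SL(\Lambda_2(\ZZ^3))$.

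For surjectivity onto $SL(\Lambda_2(\ZZ^3))$: if $\Psi\in SL_3(\ZZ)$, the above formula reduces to $\Lambda_2\Psi=({}^t\!\Psi)^{-1}$. The involution $\Psi\mapsto({}^t\!\Psi)^{-1}$ is a bijection of $SL_3(\ZZ)$ onto itself, hence every element of $SL(\Lambda_2(\ZZ^3))\cong SL_3(\ZZ)$ is of the form $\Lambda_2\Psi$ for some $\Psi\in SL_3(\ZZ)\subset GL_3(\ZZ)$. The only delicate point is keeping the signs straight in the Hodge identification so that the cofactor formula comes out correctly; everything else is bookkeeping with $2\times 2$ minors.
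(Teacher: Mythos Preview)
Your proof is correct and follows essentially the same approach as the paper: in both cases one identifies $\Lambda_2(\ZZ^3)$ with $\ZZ^3$ (the paper via $x\wedge y\mapsto\det(x,y,\cdot)\in(\ZZ^3)^*$, you via the Hodge star with the chosen oriented basis, which is the same map) and observes that $\Lambda_2\Psi$ becomes $\det(\Psi)\,({}^t\Psi)^{-1}$. Your write-up is slightly more explicit about the surjectivity onto $SL$, but the underlying argument is identical.
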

\begin{proof}
For $n=2$, we have an isomorphism
$\Lambda_2(\ZZ^2)\stackrel{\cong}{\longrightarrow}\ZZ;\,x\wedge y\mapsto
\det(x,y)$ and under this identification $\Lambda_2\Psi$ is
$\det\Psi$.

For $n=3$  we have an isomorphism
$\Lambda_2(\ZZ^3)\stackrel{\cong}{\longrightarrow}(\ZZ^3)^*;\,x\wedge y\mapsto [z\mapsto
\det(x,y,z)]$ and under this identification $\Lambda_2\Psi$ is
$\det\Psi\cdot\, ^t\Psi^{-1}$, thus $\Psi\mapsto\Lambda_2\Psi$ has range
$SL(\Lambda_2(\ZZ^3))$.
\end{proof}

On the other hand, we know that 
$\RKK$-equivalence between $A(X)$ and $B(X)$  induces an isomorphism of the K-theory 
bundles $\K_*(A(X))\cong \K_*(B(X))$, which then implies in particular that 
the actions of $\pi_1(X)$ on the fibres $K_*(A_x)$ and $K_*(B_x)$ are conjugated.
If we can arrange things in a way that  $A_x=B_x=C(\TT^2)$ for some fixed $x\in X$,
this conjugation is given by some matrix $T\in GL(\Lambda_n(\ZZ))$.
Recall from Proposition \ref{prop-pi1} that in case $n=2$ and $f(x)=1\in \TT$
the action of $\gamma\in \pi_1(X)$
on $K_0(C(\TT^2))$ is given by the matrix 
$\left(\begin{smallmatrix} 1 & -\langle f,\gamma\rangle\\0& 1\end{smallmatrix}\right)$.
In case $n=3$ we get a similar description for the action on $K_0(C(\TT^3))$ by matrices 
of the form $\left(\begin{matrix} 1& ^tv\\ 0& I_3\end{matrix}\right)$, if we choose a basis of $K_0(C(\TT^3))$ as in Lemma \ref{lem-dim3}. Thus the following lemma
gives a description of the possible conjugation matrices in these cases:

\begin{lem}\label{lem-conjugate}
Let $v$ and $w$ be vectors in $\ZZ^{n}$ and let $T$ be a matrix of
$GL_{n+1}(\ZZ)$ such that
\begin{equation}\label{equ-T}T\cdot\begin{pmatrix}1&^tv\\0&I_{n}\end{pmatrix}=\begin{pmatrix}1&^tw\\0&I_{n}\end{pmatrix}\cdot
T.\end{equation} Then there is a matrix $X$ of $GL_{n}(\ZZ)$, depending only on
$T$ such that $v=X\cdot w$.
\end{lem}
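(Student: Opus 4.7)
The plan is to perform a direct block-matrix analysis of the identity (\ref{equ-T}). Write $T$ in block form according to the splitting $\ZZ^{n+1} = \ZZ \oplus \ZZ^n$ as
$$T = \begin{pmatrix} a & {}^tb \\ c & D \end{pmatrix}, \qquad a \in \ZZ,\; b,c \in \ZZ^n,\; D \in M_n(\ZZ),$$
and expand both sides of (\ref{equ-T}) as products of $2 \times 2$ block matrices. This reduces the lemma to a handful of scalar/vector/matrix identities in the entries of $T$.

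Comparing the four blocks yields three nontrivial relations: from the $(1,1)$ block, ${}^tw\, c = 0$; from the $(1,2)$ block, $a\,{}^tv = {}^tw\, D$ (equivalently $a v = {}^tD\, w$); and from the $(2,2)$ block, the rank-one matrix identity $c\cdot {}^tv = 0$, which, working in $\ZZ$, forces $c = 0$ or $v = 0$. Split into two cases accordingly. If $c = 0$, then $\det T = a \det D = \pm 1$ forces $a = \pm 1$ and $D \in GL_n(\ZZ)$, so inverting $a\,{}^tv = {}^tw D$ and transposing gives $v = a \cdot {}^tD \cdot w$; hence $X := a \cdot {}^tD \in GL_n(\ZZ)$ does the job. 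If $c \neq 0$, then $v = 0$ and the relations ${}^tw\, c = 0$, ${}^tw\, D = 0$ combine into the single row identity $(0,\,{}^tw)\cdot T = (0,0)$; invertibility of $T$ then forces $w = 0$, so $v = X w$ holds tautologically for \emph{any} fixed $X \in GL_n(\ZZ)$. Defining $X(T) := a\cdot {}^tD$ if $c = 0$ and $X(T) := I_n$ otherwise produces a matrix depending only on $T$ with the required property.

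The computation is essentially mechanical, and the only mildly subtle point---which I would flag as the main (small) obstacle---is recognising that in the case $c \neq 0$ one must exploit the global invertibility of $T$ to eliminate $w$; the rank-one vanishing $c\cdot {}^tv = 0$ alone does not suffice, and without the auxiliary relations ${}^tw\, c = 0$ and ${}^tw\, D = 0$ being assembled into a single kernel statement for $T$, the argument would stall.
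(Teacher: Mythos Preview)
Your proof is correct and follows essentially the same block-matrix approach as the paper: write $T$ in $1\times 1$/$n\times n$ block form, derive the relations $c\cdot{}^tv=0$ and $a\,{}^tv={}^tw\,D$, and split into the cases $c=0$ (where $T$ block-triangular forces $a=\pm1$, $D\in GL_n(\ZZ)$, giving $X=a\,{}^tD$) versus $v=0$ (where one takes $X=I_n$). The one minor difference is in the second case: you carefully assemble the auxiliary relations ${}^tw\,c=0$ and ${}^tw\,D=0$ into the kernel statement $(0,{}^tw)\cdot T=0$ to conclude $w=0$, whereas the paper simply observes that $v=0$ reduces (\ref{equ-T}) to $T=\begin{pmatrix}1&{}^tw\\0&I_n\end{pmatrix}\cdot T$, from which right-cancellation by the invertible $T$ immediately gives $w=0$; your route is slightly longer but perfectly valid.
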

\begin{proof}If $T=\begin{pmatrix}a&^tx\\y&Y\end{pmatrix}$ with $a$
  in $\ZZ$, $x$ and $y$ in $\ZZ^n$ and $Y$ in $GL_{n}(\ZZ)$,  then
  equation (\ref{equ-T}) leads to the equalities
\begin{eqnarray}
\label{equ-ytv}y\cdot ^tv&=&0\\
\label{equ-atv=twX} a\, ^tv&=& ^tw\cdot Y.\end{eqnarray}
  From equation (\ref{equ-ytv})  we get that either $y$ or $v$
  vanishes. If $y=0$, then $T$ is triangular and $a\det Y=\pm 1$. In
  particular the matrix $Y$ is invertible, $a=\pm 1$ and from equation
 (\ref{equ-atv=twX}) we get that $v=a\, ^tY w$, so we can take $X=a\,
  ^t Y$.
If $v=0$, then since $T$ is invertible, $w=0$ and thus we can choose
$X=I_{n}$.
\end{proof}

\begin{thm}\label{thm-RKK}
 Let $X$ be a locally compact path connected space and let $A(X)$ and
  $B(X)$  be
  NCP $\TT^n$-bundles over $X$. Let $f,g:X\to
  \TT^\frac{n(n-1)}{2}$  be the continuous maps
  associated to $A(X)$ and $B(X)$, respectively, via the classification of NCP $\TT^n$-bundles 
of Theorem \ref{thm-T-bundle} and consider the following statements:
\begin{enumerate}
\item There exist principal $\TT^n$-bundles $Y$ and $Z$ over $X$ such
  that
$Y*A(X)$ and  $Z*B(X)$ are $\RKK$-equivalent.
\item The group bundles $\K_*(A)$ and $\K_*(B)$ are isomorphic as $\ZZ/2\ZZ$-graded group bundles.
\item $f$ is homotopic to $A\circ g$ for some $A\in GL(\Lambda_2(\ZZ^n))$.
\end{enumerate}
Then, if $n=2$, all these statements are equivalent. If $n=3$, then 
(i) $\Longrightarrow$  (ii) $\Longrightarrow$ (iii), and (iii) $\Longrightarrow$ (i) if $A\in SL(\Lambda_2(\ZZ^3))$. If $n>3$, then (i) $\Longrightarrow$ (ii) and (iii) $\Longrightarrow$ (i)
if $A=\Lambda_2\Psi$ for some $\Psi\in GL(n,\ZZ)$. 
\end{thm}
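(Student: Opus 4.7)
The plan is to treat the three implications in order, combining the explicit $\pi_1(X)$-action computations from Sections \ref{sec-dim2}--\ref{sec-general} with the homotopy-invariance results of Section \ref{sec-loctriv}.

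For (i) $\Rightarrow$ (ii), which holds for all $n$: by Remark \ref{rem-KK} any $\RKK$-equivalence over $X$ induces an isomorphism of $K$-theory group bundles, and by Lemma \ref{lem-group-twisted-bundle} twisting by a principal $\TT^n$-bundle leaves this bundle unchanged. Hence
$$\K_*(A)\cong\K_*(Y*A)\cong\K_*(Z*B)\cong\K_*(B).$$

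For (ii) $\Rightarrow$ (iii) in the cases $n=2,3$: by replacing $f$ and $g$ with pointed-homotopic representatives (which does not affect the statement (iii)) we may assume $f(x_0)=g(x_0)=(1,\dots,1)$ for a chosen basepoint $x_0\in X$, so that both fibres at $x_0$ equal $C(\TT^n)$ with $K$-theory identified with $\Lambda_*(\ZZ^n)$. Any isomorphism of $K$-theory bundles then restricts at $x_0$ to a matrix $T\in GL(\Lambda_*(\ZZ^n))$ which conjugates the two $\pi_1(X)$-actions. Restricted to the even part, these actions take the explicit block form $\left(\begin{smallmatrix}1 & {}^tv_\gamma\\ 0 & I_{n'}\end{smallmatrix}\right)$ of Proposition \ref{prop-pi1} (for $n=2$) or Lemma \ref{lem-dim3} (for $n=3$), with $v_\gamma$ assembled from the winding numbers $\langle f_{ij},\gamma\rangle$ (respectively $\langle g_{ij},\gamma\rangle$). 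Lemma \ref{lem-conjugate} then produces a matrix $X\in GL_{n'}(\ZZ)$ (with $n'=1$ if $n=2$ and $n'=3$ if $n=3$) such that the $f$-data equals $X$ times the $g$-data on every loop $\gamma$. Because $\TT^{n(n-1)/2}$ is an Eilenberg--MacLane space $K(\ZZ^{n(n-1)/2},1)$, homotopy classes of maps $X\to\TT^{n(n-1)/2}$ are detected by the induced maps on $H^1(-;\ZZ)$, and the linear-algebraic relation above lifts to a homotopy $f\simeq A\circ g$ with $A\in GL(\Lambda_2(\ZZ^n))$ the endomorphism encoded by $X$.

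For (iii) $\Rightarrow$ (i) under the hypothesis $A=\Lambda_2\Psi$ with $\Psi\in GL_n(\ZZ)$: choose a homotopy $F\colon[0,1]\times X\to\TT^{n(n-1)/2}$ from $f$ to $\Lambda_2\Psi\circ g$ and form the NCP $\TT^n$-bundle
$$\mathcal{A}:=\bigl([0,1]\times (Z_\Psi\cdot Y)\bigr)*F^*C^*(H_n)\bigl([0,1]\times X\bigr)$$
over $[0,1]\times X$, where $Z_\Psi\cdot Y$ denotes the product of principal $\TT^n$-bundles over $X$. Corollary \ref{cor-homotop} produces an $\RKK$-equivalence between the endpoint restrictions of $\mathcal{A}$; by associativity of the $*$-action on NCP bundles these are
$$Z_\Psi*A(X)\quad\text{and}\quad Y*\bigl(Z_\Psi*(\Lambda_2\Psi\circ g)^*C^*(H_n)(X)\bigr)\cong Y*B(X),$$
where the final isomorphism uses Theorem \ref{thm-T-bundle-action} to recognise the bracketed term as $B_\Psi(X)$, combined with the tautology $B_\Psi(X)=B(X)$ as $C_0(X)$-algebras (they differ only in their $\TT^n$-action, which is forgotten for the purposes of $\RKK$-equivalence). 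Taking $Y':=Z_\Psi$ and $Z':=Y$ therefore yields (i). The restrictions on $A$ in the theorem statement are dictated by Lemma \ref{lem-image}: for $n=2$ every $A$ is of the form $\Lambda_2\Psi$, while for $n=3$ precisely the elements of $SL(\Lambda_2(\ZZ^3))$ arise this way. The main technical obstacle lies in the book-keeping surrounding the $*$-action in this last step---in particular verifying the associativity identity $(Y_1\cdot Y_2)*C\cong Y_1*(Y_2*C)$ in the equivariant setting and confirming that the restriction of $\mathcal{A}$ to $\{t\}\times X$ is $(Z_\Psi\cdot Y)*F_t^*C^*(H_n)(X)$---but these are ultimately formal manipulations of fibre products and fixed-point algebras.
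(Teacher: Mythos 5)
Your argument follows the paper's proof almost verbatim for (i) $\Longrightarrow$ (ii) and (ii) $\Longrightarrow$ (iii): the same reduction to pointed representatives with $f(x_0)=g(x_0)=(1,\dots,1)$, the same use of Proposition \ref{prop-pi1} (resp.\ Lemma \ref{lem-dim3}) to put the $\pi_1(X)$-action on the even part into the block form $\left(\begin{smallmatrix}1&{}^tv\\0&I\end{smallmatrix}\right)$, the same appeal to Lemma \ref{lem-conjugate} (whose point, as you correctly emphasize, is that the matrix $X$ depends only on $T$ and hence is uniform in $\gamma$), and the same $K(\ZZ^{\frac{n(n-1)}{2}},1)$ argument as in Proposition \ref{prop-inj}. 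Where you diverge is (iii) $\Longrightarrow$ (i). The paper first invokes the invariance of all three statements under twisting to assume outright that $A(X)=f^*C^*(H_n)$ and $B(X)=g^*C^*(H_n)$, after which Corollary \ref{cor-homotopic} and Theorem \ref{thm-T-bundle-action} give $f^*C^*(H_n)\sim_{\RKK}(\Lambda_2\Psi\circ g)^*C^*(H_n)\cong g^*C^*(H_n)$ in two lines; you instead carry the classifying principal bundles through a homotopy bundle over $[0,1]\times X$. That route can be made to work, but as written it contains a slip: from the tautology that $B_\Psi(X)=B(X)$ as $C_0(X)$-algebras you cannot conclude $Y*B_\Psi(X)\cong Y*B(X)$, because $Y*(\cdot)$ is a fixed-point algebra under a diagonal action and therefore depends on the $\TT^n$-action of its second argument, which is exactly what $\Psi$ changes. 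The correct identification, via Lemma \ref{lem-action}, is $Y*B_\Psi(X)=(Y_{\Psi^{-1}}*B)_\Psi(X)$, whose underlying $C_0(X)$-algebra is $Y_{\Psi^{-1}}*B(X)$; since statement (i) only requires the existence of \emph{some} principal bundles, taking $Z'=Y_{\Psi^{-1}}$ repairs the step and your conclusion stands. The paper's order of operations --- untwist first, then homotope --- avoids this bookkeeping entirely.
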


\begin{remark} \label{rem-RKK}
In case $n=3$ the obstruction for proving (iii) $\Longrightarrow$ (ii) is given by the problem, whether
the pullback $f^*C^*(H_3)$ is  $\RKK$-equivalent
to $C^*(H_3)$ as C*-algebra bundle over $\TT^3$, where $f:\TT^3\to \TT^3$ is the 
map $f(z_1,z_2,z_3)=(\bar{z}_1,\bar{z}_2,\bar{z}_3)$. We believe that this is true, but we 
are not able to prove it.
In  dimensions $n>3$ the situation is much more mysterious.
\end{remark}

\begin{proof} Since the assertions (i), (ii) and (iii) are left invariant under passing from 
$A(X)$ to $Y*A(X)$ (and similarly for $B(X)$), which is clear for (i) and (iii) and follows 
from  Lemma \ref{lem-group-twisted-bundle} for (ii), we may as well assume that $A(X)=f^*(C^*(H_n))$
and $B(X)=g^*(C^*(H_n))$.
Then (i) implies (ii) is a consequence of 
 Remark \ref{rem-KK}.

Let us now assume that condition (ii) is satisfied. Let us fix some $x\in X$. Since homotopies of NCP
torus bundles provide $\RKK$-equivalences by Corollary \ref{cor-homotopic},
and thus isomorphisms of the associated $K$-theory group bundles by Remark \ref{rem-KK}, we can
replace $f$ and $g$ by homotopic functions. Moreover,  $\TT^\frac{n(n-1)}{2}$ being a
connected group, we can in fact assume that $f(x)=g(x)=1$.
Since the group bundles $\K_*(f^*C^*(H_2)))$ and $\K_*(g^*C^*(H_2))$ are isomorphic, 
the corresponding actions of $\pi_1(X)$ on $K_0(C(\TT^\frac{n(n-1)}{2}))\cong\Lambda_{ev}(\ZZ^n)$
are  conjugate by a matrix $T
\in GL(\Lambda_{ev}(\ZZ^n))$ (note that the action of $\pi_1(X)$ on $K_*(C(\TT^n))\cong \Lambda_*(\ZZ^n)$ preserves the grading). 
If $n=2$, it follows then from  Proposition \ref{prop-pi1} 
 that for any $\gamma$ in $\pi_1(X)$, we
have
{$$T\cdot\begin{pmatrix}1&\langle f
  ,\gamma\rangle\\0&1\end{pmatrix}=\begin{pmatrix}1&\langle
  g,\gamma\rangle\\0&1\end{pmatrix}\cdot T,$$}
which implies by Lemma \ref{lem-conjugate} that
$\langle
  f,\gamma\rangle=\eps \langle
  g,\gamma\rangle$ for every $\gamma$ in $\pi_1(X)$, with $\eps=\pm 1$.
{Since  $\langle f \cdot g^{-\eps},\gamma\rangle =\langle
f,\gamma\rangle-\eps\langle g,\gamma\rangle$, this implies arguing as in the proof of
Proposition \ref{prop-inj} that $f$ and $g^\eps$ are
  homotopic and  hence (iii).}
 
 In case $n=3$ we can argue similarly: using Lemma \ref{lem-dim3} together with 
 Lemma \ref{lem-conjugate}, we see, as in the case $n=2$, that (ii) implies 
 the existence of some $A\in GL(\ZZ^3)$ ($=GL(\Lambda_2(\ZZ^3))$) such that
 $$\left(\begin{matrix} \langle g_1,\gamma\rangle \\ \langle g_2,\gamma\rangle \\ \langle g_3,\gamma\rangle \end{matrix}\right)
 =A\left(\begin{matrix} \langle f_1,\gamma\rangle \\ \langle f_2,\gamma\rangle \\ \langle f_3,\gamma\rangle \end{matrix}\right)
 =\left(\begin{matrix} \langle (A\circ f)_1,\gamma\rangle \\ \langle (A\circ f)_2,\gamma\rangle \\ \langle (A\circ f)_3,\gamma\rangle 
 \end{matrix}\right).$$
 Hence, combining Lemma \ref{lem-prod} with {the proof of } Proposition \ref{prop-inj}
this implies that $g$ is homotopic to $A\circ f$ for some $A$ in $GL(\Lambda_2(\ZZ^3))$.

 Let us finally assume (iii). As mentioned before we may
 assume that $A(X)=f^*(C^*(H_n))$ and $B(X)=g^*(C^*(H_n))$. If $g=A\circ f$ with
 $A=\Lambda_2\Psi$ for some $\Psi\in GL_n(\ZZ)$, then it follows from 
 Theorem \ref{thm-T-bundle-action} that 
 $g^*(C^*(H_n))=(\Lambda_2\Psi\circ f)^*(C^*(H_n))$ is isomorphic to $f^*(C^*(H_n))$
 as a C*-algebra bundles over $X$. In particular, it follows that 
 $f^*(C^*(H_n))$ is $\RKK$-equivalent to $g^*(C^*(H_n))$. 
 In case $n=2$, the range of $\Psi\mapsto \Lambda_2\Psi$ is all of 
 $GL(\Lambda_2(\ZZ^2))=GL_1(\ZZ)$. In case $n=3$, the range is $SL(\Lambda_2(\ZZ))$
 (see Lemma \ref{lem-image}). This finishes the proof.
 \end{proof}

\section{Applications to $T$-duality with H-flux}\label{sec-T-dual}
In this section we want to discuss an application of our results 
to the study of $T$-duals with H-flux, as studied by Mathai and Rosenberg in 
\cite{MR1,MR2}. We shall restrict ourselves to the mathematical point of view 
 and refer to the above cited papers  
(and the references given there) for physical interpretations of the theory.

Following the approach of \cite{MR1,MR2} we start with a fixed principal 
$\TT^n$-bundle $q:Y\to X$ together with a Dixmier-Douady class $\delta\in H^2(X,\ZZ)$
(which is called an {\em H-flux} in \cite{MR1}). 
It is then shown in \cite{MR2} that a (commutative or non-commutative) T-dual
only exists if the restriction of $\delta$ to the fibres $Y_x\cong \TT^n$ are trivial.
In that case the corresponding stable continuous trace algebra $CT(Y,\delta)$ carries 
a canonical structure as a C*-algebra bundle over $X$ with fibres 
$C(Y_x,\K)\cong C(\TT^n,\K)$
for all $x\in X$, and there exists at least one action {$\beta:\RR^n\to \Aut(CT(Y,\delta))$
which covers the given action on the base $Y$. A T-dual
in the sense  of Mathai-Rosenberg is then given by the crossed product 
$CT(Y,\delta)\rtimes_{\beta}\RR^n$. 

If the action $\beta$ can be chosen in such a way that the crossed product 
$CT(Y,\delta)\rtimes \RR^n$ is  again a continuous-trace algebra, then it is
shown in \cite{MR1,MR2}, using much earlier results from \cite{RR},
 that it is isomorphic to an algebra of the form $CT(\widehat{Y},\widehat{\delta})$
 for a $\TT^n$-principal bundle $\hat{q}:\widehat{Y}\to X$ and a suitable 
 Dixmier-Douady class $\widehat{\delta}\in H^3(\widehat{Y},\ZZ)$, such that 
 the dual action of $\RR^n$ on $CT(Y,\delta)\rtimes \RR^n\cong CT(\widehat{Y},\widehat{\delta})$
 covers the action on the base as explained before. In this case the bundle
 $CT(\widehat{Y},\widehat{\delta})$ is called a {\em classical T-dual} of $CT(Y,\delta)$.
 It is shown in \cite[Proposition 3.3]{MR2} that classical duals, if they exist, are essentially unique.

 The main result of \cite{MR2} is then a characterization of those bundles $CT(Y,\delta)$ which do 
 have a classical dual in the above sense (see \cite[Theorem 3.1]{MR2}). As an application of 
 our results we shall give a purely $K$-theoretical characterization of existence 
 of classical duals. Note first that all T-duals 
are at least locally isomorphic to NCP $\TT^n$-bundle 
in our sense. We start with}

\begin{lem}\label{lem-NCPdual}
Suppose that $Y=X\times \TT^n$ and $\delta|_{X\times\{1\}}$ is trivial. Suppose further that 
$\beta:\RR^n\to \Aut(CT(X\times \TT^n,\delta))$ is an action which covers the 
obvious action of $\RR^n$ on the base $X\times\TT^n$. Then 
there exists an action $\alpha:\ZZ^n\to \Aut(C_0(X,\K))$ such that
$$CT(X\times \TT^n,\delta)\rtimes_{\beta}\RR^n\sim_M C_0(X,\K)\rtimes_{\alpha}\ZZ^n$$
where here $\sim_M$ denotes Morita equivalence of C*-algebra bundles over $X$. In particular,
the T-dual $CT(X\times \TT^n,\delta)\rtimes_{\beta}\RR^n$ is Morita equivalent to an NCP $\TT^n$-bundle
over $X$.
\end{lem}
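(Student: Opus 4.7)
\textbf{Proof plan for Lemma \ref{lem-NCPdual}.}

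The strategy is to realize $CT(X\times\TT^n,\delta)$ as an induced algebra from the stabilizer subgroup $\ZZ^n\subset\RR^n$ and then invoke Green's imprimitivity theorem. First I would observe that, since $\RR^n$ acts on the base $X\times\TT^n$ through the quotient map $\RR^n\to\RR^n/\ZZ^n=\TT^n$, the subgroup $\ZZ^n$ acts trivially on $X\times\TT^n$. Consequently $\beta|_{\ZZ^n}$ is a fibre-wise action on $CT(X\times\TT^n,\delta)$ with respect to the $C_0(X\times\TT^n)$-structure, and in particular it preserves the restriction ideal corresponding to the closed subset $X\times\{1\}$. Thus it descends to an action
$$\alpha:\ZZ^n\to\Aut\bigl(CT(X\times\TT^n,\delta)|_{X\times\{1\}}\bigr).$$
Using the hypothesis that $\delta|_{X\times\{1\}}$ is trivial, the restricted stable continuous-trace algebra is $C_0(X)$-linearly isomorphic to $C_0(X,\K)$, giving the desired $\alpha:\ZZ^n\to\Aut(C_0(X,\K))$, which is again fibre-wise over $X$ since $\ZZ^n$ acts trivially on $X$.

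Next I would identify $CT(X\times\TT^n,\delta)$ with the induced algebra $\Ind_{\ZZ^n}^{\RR^n}C_0(X,\K)$ with respect to $\alpha$. Since $\RR^n$ acts on $X\times\TT^n$ so that the $\ZZ^n$-saturation of the transversal $X\times\{1\}$ is all of $X\times\TT^n$, the standard transversal/induction principle (as used in \cite{RW} and recalled in the proof of Lemma \ref{lem-nctorusfibration}) produces an $\RR^n$-equivariant $C_0(X)$-linear isomorphism
$$CT(X\times\TT^n,\delta)\;\cong\;\Ind_{\ZZ^n}^{\RR^n}C_0(X,\K),$$
sending a section $F$ on the right-hand side to its value at $0\in\RR^n$. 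One must check here that the cocycle data making up $\beta$ on the induced side reproduces $\beta$ on the left-hand side; this is a routine but slightly technical verification using that $\delta|_{X\times\{1\}}$ is trivial, so that the only $\RR^n$-equivariant data to match is the $\ZZ^n$-action $\alpha$ on the transversal.

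Finally, I would apply Green's symmetric imprimitivity theorem (see \cite{Will}) to the inclusion $\ZZ^n\subset\RR^n$ and the $\ZZ^n$-algebra $C_0(X,\K)$. This yields a Morita equivalence
$$\bigl(\Ind_{\ZZ^n}^{\RR^n}C_0(X,\K)\bigr)\rtimes\RR^n\;\sim_M\;C_0(X,\K)\rtimes_\alpha\ZZ^n,$$
and the implementing imprimitivity bimodule is manifestly $C_0(X)$-linear because the orbit space of $\RR^n$ on $X\times\TT^n$ is $X$, so the imprimitivity bimodule comes equipped with a $C_0(X)$-structure which is preserved by both module actions. Composing this with the isomorphism from the previous step gives the desired Morita equivalence of C*-algebra bundles over $X$.

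\textbf{Main obstacle.} The only non-trivial point is the induction step, i.e.\ realizing the stable continuous-trace algebra $CT(X\times\TT^n,\delta)$ with its $\RR^n$-action as $\Ind_{\ZZ^n}^{\RR^n}C_0(X,\K)$. All the $K$-theoretic and bundle-theoretic content is encoded in the $\ZZ^n$-action $\alpha$ on the transversal, and one needs the hypothesis $\delta|_{X\times\{1\}}=0$ to trivialize the fibre algebra on the transversal so that this identification becomes clean. Once this is in place, the Morita equivalence is a formal consequence of Green's theorem.
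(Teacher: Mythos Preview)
Your proposal is correct and follows essentially the same approach as the paper: restrict $\beta$ to $\ZZ^n$, pass to the transversal $X\times\{1\}$ to obtain $\alpha$, identify $(CT(X\times\TT^n,\delta),\RR^n,\beta)$ with the induced system $(\Ind_{\ZZ^n}^{\RR^n}C_0(X,\K),\RR^n,\Ind\alpha)$, and then apply Green's imprimitivity theorem. The only minor point is your attribution of the induction step: the paper invokes \cite{ech} (the result on induced covariant systems) for the $\RR^n$-equivariant isomorphism with the induced algebra, rather than \cite{RW}, which in Lemma~\ref{lem-nctorusfibration} is cited only for the continuous-trace property of the induced algebra; \cite{ech} is precisely the black-box that handles what you call the ``main obstacle''.
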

\begin{proof} If we restrict $\beta$ to $\ZZ^n$, we obtain a fibre-wise action of $\ZZ^n$ 
on $CT(X\times \TT^n,\delta)$, which then restricts to a fibre-wise action $\alpha$  of
$\ZZ^n$ on $C_0(X,\K)= CT(X\times\{1\},\delta|_{X\times\{1\}})$. It follows then from 
 \cite[Theorem]{ech} that 
  the systems 
$$\big(CT(X\times \TT^n,\delta),\,\RR^n,\,\beta\big)\quad\text{and}\quad
\big(\Ind_{\ZZ^n}^{\RR^n}C_0(X,\K),\,\RR^n\,,\Ind\alpha\big)$$
are isomorphic, and then  Green's imprimitivity theorem gives the desired 
Morita equivalence, which is easily checked to be compatible with the bundle
structures over $X$.
\end{proof}

As an easy consequence we  get:

\begin{prop}\label{prop-local-T-dual}
Assume that $q:Y\to X$ is a principal $\TT^n$-bundle with H-flux $\delta\in H^3(Y,\ZZ)$ 
and let $\beta:\RR^n\to \Aut(CT(Y,\delta))$ be an action which covers the given action on $Y$.
Then for any $y\in Y$
there exists an $\RR^n$-invariant neighbourhood $V$ of $y$
such that 
$CT(V,\delta|_V)\rtimes \RR^n$ is $C_0(U)$-Morita equivalent to some 
NCP $\TT^n$-bundle over $U$.
\end{prop}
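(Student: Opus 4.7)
The plan is to reduce to Lemma \ref{lem-NCPdual} by working in a sufficiently small trivializing neighbourhood of $x:=q(y)$.

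First, since $q:Y\to X$ is a principal $\TT^n$-bundle, it is locally trivial, so I can pick a neighbourhood $U_0$ of $x$ and an isomorphism $q^{-1}(U_0)\cong U_0\times \TT^n$ as principal $\TT^n$-bundles. I then shrink $U_0$ to a smaller open neighbourhood $U\subseteq U_0$ of $x$ with $H^3(U,\ZZ)=0$; this ensures that the restriction $\delta|_{U\times\{1\}}\in H^3(U\times\{1\},\ZZ)\cong H^3(U,\ZZ)$ of the Dixmier-Douady class is trivial. Such a $U$ can always be chosen contractible for the spaces considered in \cite{MR1,MR2}.

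Set $V:=q^{-1}(U)\cong U\times\TT^n$. Because the $\RR^n$-action on $Y$ preserves the fibres of $q$, the open set $V$ is $\RR^n$-invariant, so the ideal $CT(V,\delta|_V)\subseteq CT(Y,\delta)$ is $\beta$-invariant and $\beta$ restricts to an action $\beta|_V:\RR^n\to \Aut(CT(V,\delta|_V))$ which covers the translation action on the base $V\cong U\times\TT^n$.

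With these choices the hypotheses of Lemma \ref{lem-NCPdual} are satisfied (with $U$ in place of $X$): the bundle is trivial as a $\TT^n$-bundle and $\delta|_{U\times\{1\}}$ vanishes. The lemma then produces a fibre-wise action $\alpha:\ZZ^n\to\Aut(C_0(U,\K))$ and a $C_0(U)$-Morita equivalence
$$CT(V,\delta|_V)\rtimes_{\beta|_V}\RR^n\;\sim_M\; C_0(U,\K)\rtimes_\alpha \ZZ^n.$$
By Proposition \ref{prop-dualbundle}, the right-hand side, equipped with the dual $\TT^n$-action, is an NCP $\TT^n$-bundle over $U$, which gives the desired conclusion. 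The only delicate point is the ability to shrink $U$ so that $\delta$ restricts trivially on a section of the trivialization; this is the step that requires the mild topological regularity of $X$ implicit in the setting of \cite{MR1,MR2} and is the only real obstacle in the argument.
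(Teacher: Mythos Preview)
Your argument is correct and follows essentially the same route as the paper's proof: choose a trivializing $\RR^n$-invariant neighbourhood, shrink so that the Dixmier--Douady class restricts trivially over the base slice, and then invoke Lemma \ref{lem-NCPdual}. The paper's version is more terse but makes the same moves, including the implicit appeal to local contractibility when shrinking $U$.
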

\begin{proof} We first choose a trivializing  $\RR^n$-invariant neighbourhood $V$ of $y$ 
such that $V\cong U\times \TT^n$ as $\RR^n$-space. By passing to a smaller neighbourhood
of $x=q(y)\in U$ if necessary,  we may assume without loss of generality that $\delta|_U$ is trivial.
Hence the result follows from Lemma \ref{lem-NCPdual} above.
\end{proof}

\begin{cor}\label{cor-RKKtriv}
Let $q:Y\to X$ be a principal $\TT^n$-bundle with H-flux $\delta\in H^3(X,\ZZ)$ such that there 
exists an action $\beta$ of $\RR^n$ on $CT(Y,\delta)$ which covers the given action on $Y$.
Consider $CT(Y,\delta)$ and the T-dual $CT(Y,\delta)\rtimes \RR^n$ as C*-algebra bundles over $X$.
Then both bundles are are locally $\RKK$-trivial and 
$\RKK(X;\cdot, \cdot)$-equivalent to each other (up to a dimension shift of order $n$).
\end{cor}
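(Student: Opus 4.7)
The plan is to prove the three assertions independently, using the strong Thom isomorphism for the global equivalence and the local structure of $Y$ for the two triviality claims.

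First I would verify that the action $\beta$ is fibre-wise over $X$: the $C_0(X)$-bundle structure on $CT(Y,\delta)$ factors through the canonical $C_0(Y)$-bundle structure via $q^\ast:C_0(X)\to C_b(Y)$, and since $\beta$ covers the $\RR^n$-action on $Y$ which is vertical along $q$, it fixes the image of $q^\ast$ and is therefore $C_0(X)$-linear. Theorem \ref{thm-Thom} applied to $\beta$ then delivers an invertible element
\[
\frak t \in \RKK_n\bigl(X;\, CT(Y,\delta),\, CT(Y,\delta)\rtimes_\beta \RR^n\bigr),
\]
which is precisely the claimed $\RKK$-equivalence up to the dimension shift of order $n$.

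For local $\RKK$-triviality of $CT(Y,\delta)$, I would argue directly: fix $x\in X$ and pick a contractible neighbourhood $U$ of $x$ over which $q$ trivialises, so $q^{-1}(U)\cong U\times\TT^n$. Since $U$ is contractible, the K\"unneth theorem gives $H^3(U\times\TT^n,\ZZ)\cong H^3(\TT^n,\ZZ)$ with the isomorphism realised by restriction to any fibre; the standing assumption that $\delta$ vanishes on every fibre $Y_x\cong\TT^n$ then forces $\delta|_{q^{-1}(U)}=0$. Consequently
\[
CT(Y,\delta)(U)\sim_M C_0(U\times\TT^n)\cong C_0(U,C(\TT^n))
\]
as $C_0(U)$-algebras, and after stabilisation this is a trivial bundle with fibre $C(\TT^n,\K)=CT(Y,\delta)_x$, which proves local $\RKK$-triviality.

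For local $\RKK$-triviality of the T-dual I would combine Proposition \ref{prop-local-T-dual} with Corollary \ref{cor-localRKK-triv}: given $x\in X$, choose $y\in q^{-1}(x)$, use Proposition \ref{prop-local-T-dual} to obtain an $\RR^n$-invariant neighbourhood $V=q^{-1}(U)$ such that $CT(Y,\delta)(U)\rtimes\RR^n$ is $C_0(U)$-Morita equivalent to an NCP $\TT^n$-bundle over $U$, and then shrink $U$ using Corollary \ref{cor-localRKK-triv} so that this NCP bundle becomes $\RKK(U;\cdot,\cdot)$-trivial. (Alternatively one could derive the triviality of the T-dual from that of $CT(Y,\delta)$ by transporting along $\frak t$ and applying Thom fibre-wise to $C(\TT^n,\K)$, but the direct route is cleaner.) The only real obstacle is the bookkeeping needed to check that every Morita equivalence, the Thom element, and the local trivialisations are compatible with the various $C_0(X)$-structures; all the substantive content has been established in the cited results.
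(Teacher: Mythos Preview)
Your proof is correct and aligns with the paper's one-line argument, which simply cites Theorem \ref{thm-Thom}, Proposition \ref{prop-local-T-dual} and Corollary \ref{cor-localRKK-triv}. The only difference is in how local $\RKK$-triviality of $CT(Y,\delta)$ itself is obtained: the paper's three citations cover the T-dual (locally NCP, hence locally $\RKK$-trivial) and the Thom equivalence, so triviality of $CT(Y,\delta)$ is implicitly transported back along $\frak t$; you instead give a direct argument via contractible trivialising neighbourhoods and vanishing of $\delta$, which is essentially the content of the subsequent Lemma \ref{lem-triv}. Both routes are valid; yours is more self-contained, while the paper's avoids re-proving a special case of what is stated immediately afterwards.
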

\begin{proof} 
This follows from  Theorem \ref{thm-Thom}, Proposition \ref{prop-local-T-dual} and 
Corollary \ref{cor-localRKK-triv}.
\end{proof}

\begin{lem}\label{lem-triv}
Let $q:Y\to X$ be a principal $\TT^n$-bundle over $X$ and let $\delta\in H^3(Y,\ZZ)$ 
such that $\delta$ vanishes on the fibres $Y_x\cong \TT^n$. Assume further that
$\Delta$ is a compact contractible space and that $F:\Delta\to X$ is continuous map. Then
$$F^*CT(Y,\delta)\cong C(\Delta\times \TT^n,\K)$$
as C*-algebra bundles over $X$, such that the corresponding homeomorphism 
of the base $F^*Y\cong \Delta\times \TT^n$ is $\TT^n$-equivariant. In particular, 
$CT(Y,\delta)$ is a $K$-fibration in the sense of Definition \ref{def-K-fibration}.
\end{lem}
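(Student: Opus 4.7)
The plan is to reduce everything to showing that the pulled-back Dixmier-Douady class on $\Delta\times\TT^n$ vanishes, and then to invoke the structure theorem for stable continuous-trace algebras with trivial class. First, since $\Delta$ is contractible, the pulled-back principal $\TT^n$-bundle $F^*Y\to \Delta$ is trivializable, so I would fix a $\TT^n$-equivariant homeomorphism $\phi:\Delta\times\TT^n\stackrel{\cong}{\to} F^*Y$. This immediately supplies the required $\TT^n$-equivariant identification of bases, and it lifts to a $C(\Delta)$-linear isomorphism $F^*CT(Y,\delta)\cong CT(\Delta\times\TT^n, \tilde{F}^*\delta)$ where $\tilde{F}:F^*Y\to Y$ is the canonical map covering $F$ and we abuse notation by writing $\tilde F^*\delta$ for $(\tilde F\circ\phi)^*\delta$.

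The key step is to show $\tilde F^*\delta=0$ in $H^3(\Delta\times\TT^n,\ZZ)$. For any $v\in\Delta$, the inclusion $\iota_v:\{v\}\times\TT^n\hookrightarrow \Delta\times\TT^n$ is a homotopy equivalence (since $\Delta$ is contractible), so $\iota_v^*:H^3(\Delta\times\TT^n,\ZZ)\to H^3(\TT^n,\ZZ)$ is an isomorphism. On the other hand, $\iota_v^*\tilde F^*\delta$ equals the restriction of $\delta$ to the fibre $Y_{F(v)}\cong\TT^n$, which vanishes by hypothesis. Hence $\tilde F^*\delta=0$, and consequently $CT(\Delta\times\TT^n, \tilde F^*\delta)$ is $C(\Delta\times\TT^n)$-linearly isomorphic to the stable trivial bundle $C(\Delta\times\TT^n,\K)$. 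Viewing this as a $C(\Delta)$-algebra via the projection, this establishes the first assertion of the lemma.

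For the "In particular" part, the isomorphism just obtained identifies $F^*CT(Y,\delta)$ with $C(\Delta)\otimes C(\TT^n,\K)$ as $C(\Delta)$-algebras. At any $v\in\Delta$, the evaluation map factors as $C(\Delta)\otimes C(\TT^n,\K)\to C(\TT^n,\K)$, which is a homotopy equivalence of C*-algebras because $\Delta$ is contractible, hence a $\KK$-equivalence. Since under our identifications the fibre $CT(Y,\delta)_{F(v)}\cong C(Y_{F(v)},\K)\cong C(\TT^n,\K)$ corresponds to $C(\TT^n,\K)$, this shows that $CT(Y,\delta)$ satisfies the condition of Definition \ref{def-K-fibration}, so it is a $K$-fibration (in fact a $\KK$-fibration).

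The main obstacle in the argument is the cohomological vanishing of $\tilde F^*\delta$; once this is established the rest is essentially formal. A minor point requiring care is the compatibility of the identification $CT(F^*Y,F^*\delta)\cong F^*CT(Y,\delta)$ with the $C_0(X)$- and $C(\Delta)$-bundle structures, but this is standard functoriality of the $CT$ construction and of the pull-back of $C_0(X)$-algebras (cf.\ Remark \ref{rem-loctriv}).
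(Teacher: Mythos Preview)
Your argument is correct and follows the same route as the paper's proof: trivialize the pulled-back principal bundle over the contractible base $\Delta$, then show that the resulting continuous-trace algebra over $\Delta\times\TT^n$ has trivial Dixmier--Douady class. The paper phrases the vanishing step slightly differently, saying only that the class ``vanishes on the fibre $\TT^n$'' and then invoking a classical fibre-bundle result (citing Husemoller) to conclude the $\K$-bundle is trivial; your homotopy-equivalence argument $\iota_v^*:H^3(\Delta\times\TT^n,\ZZ)\stackrel{\cong}{\to}H^3(\TT^n,\ZZ)$ is exactly the content of that citation, made explicit. You also spell out the $K$-fibration conclusion, which the paper leaves implicit.
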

\begin{proof} The result follows easily from the fact that
 $F^*(CT(Y,\delta))$ is a continuous trace algebra over $F*Y=\Delta\times \TT^n$ (since $\Delta$ is contractible) such 
that the Dixmier-Douady class vanishes on the fibre $\TT^n$. Then a classical result 
on fibre bundles (e.g. see \cite{Huse}) 
implies that that the underlying  bundle of compact operators over $\Delta\times \TT^n$ 
corresponding to $F^*(CT(Y,\delta))$ is trivial, and hence
$F^*(CT(Y,\delta))\cong C(\Delta\times\TT^n,\K)$. 
\end{proof}

It follows from this lemma that whenever $q:Y\to\TT$ is a principal $\TT^n$-bundle with
H-flux $\delta$ such that the restriction of $\delta$ to the fibres is trivial,
 we can consider the $K$-theory group bundle 
$\K_*(CT(Y,\delta))$ over $X$ as introduced in Definition \ref{defn-group-bundle}.

If $\beta:\RR^n\to \Aut(CT(Y,\delta))$ is an action which covers the given $\RR^n$-action on $Y$,
then the restriction of $\beta$ to $\ZZ^n\subseteq \RR^n$ acts trivially on $Y$, and therefore
is a fibre-wise action on $CT(Y,\delta)$. Since the fibres are isomorphic to the 
compact operators on separable Hilbert space, we get, as explained in \S 2,  a 
Mackey-obstruction $[\om_y]\in H^2(\ZZ^n,\TT)$
which classifies the action on the fibre $\K_y$ of $CT(Y,\delta)$. The resulting map
$$\tilde{f}:Y\to H^2(\ZZ^n,\TT)=\TT^\frac{n(n-1)}{2}$$
is constant on $\RR^n$-orbits (see \cite[Theorem 2.3]{Goot}), 
and hence on the fibres $Y_x=q^{-1}({x})$ of the
principal bundle $q:Y\to X$. Thus $\tilde{f}$ factors through a well defined 
map 
$$f: X\to \TT^\frac{n(n-1)}{2}$$
which we call the Mackey-obstruction map for the T-dual $CT(Y,\delta)\rtimes_{\beta}\RR^n$.
Of course, if we locally realize the T-dual as an NCP $\TT^n$-bundle as 
in Proposition \ref{prop-local-T-dual}, then $f$ restricts to the Mackey-obstruction map 
of that NCP $\TT^n$-bundle as it shows up in the classification data of those bundles (see \S 2).
We are now ready to formulate the main result of this section, which builds on the 
work of Mathai and Rosenberg in \cite{MR2}:

\begin{thm}\label{thm-T-dual}
Suppose that $q:Y\to X$ is a principal $\TT^n$-bundle with H-flux $\delta\in H^3(Y,\ZZ)$ such that
$\delta$ vanishes on the fibres $Y_x\cong \TT^n$. Assume that $X$ is path connected. Then the following are equivalent:
\begin{enumerate}
\item There exists a classical T-dual for $CT(Y,\delta)$.
\item The action of $\pi_1(X)$ on the fibres $K_*(C_0(Y_x,\K))\cong K^*(\TT^n)$ is trivial.
\item The $K$-theory group bundle $\K_*(CT(Y,\delta))$ over $X$ with fibres 
$K_*(C_0(Y_x,\K))\cong K^*(\TT^n)$ is trivial.
\item For any action $\beta:\RR^n\to \Aut(CY(Y,\delta))$ which covers the given action on $Y$, 
the corresponding Mackey-obstruction map $f:X\to \TT^\frac{n(n-1)}{2}$ is homotopic to a constant.
\end{enumerate}
\end{thm}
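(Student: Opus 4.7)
The plan is to prove the four-way equivalence by combining the classification of NCP $\TT^n$-bundles from Theorem \ref{thm-T-bundle}, the $K$-theoretic machinery of Sections \ref{sec-bundle}--\ref{sec-general}, and the existence criterion for classical T-duals from \cite[Theorems 2.3 and 3.1]{MR2}. The equivalence (ii) $\Leftrightarrow$ (iii) is immediate from the definition of a trivializable group bundle over a path-connected space given after Definition \ref{defn-group-bundle}.

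For (iii) $\Leftrightarrow$ (iv), I would argue as follows. By Corollary \ref{cor-RKKtriv}, the T-dual $CT(Y,\delta)\rtimes_\beta \RR^n$ is $\RKK(X;\cdot,\cdot)$-equivalent to $CT(Y,\delta)$ up to a dimension shift of order $n$, so by Remark \ref{rem-KK} their $K$-theory group bundles agree up to grading. Next, Proposition \ref{prop-local-T-dual} together with Theorem \ref{thm-T-bundle} shows that the T-dual is locally $C_0(U)$-Morita equivalent to an NCP $\TT^n$-bundle with classifying data $([Y'\to U], f|_U)$, where $f$ is the Mackey obstruction map attached to $\beta$; Lemma \ref{lem-group-twisted-bundle} ensures that the principal-bundle twist does not affect the $K$-theory bundle, so locally $\K_*(CT(Y,\delta))\cong \K_*((f|_U)^*(C^*(H_n)))$, and these local identifications assemble to a global identification $\K_*(CT(Y,\delta))\cong \K_*(f^*(C^*(H_n)))$ of group bundles over $X$. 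A direct application of Proposition \ref{prop-inj} then yields the equivalence between triviality of this $K$-theory bundle and null-homotopy of $f$. The same argument works for any covering action $\beta$, so (iii) is equivalent to $f_\beta$ being null-homotopic for every $\beta$, which is (iv).

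For (i) $\Leftrightarrow$ (iv), I would invoke the Mathai-Rosenberg criterion \cite[Theorems 2.3 and 3.1]{MR2}, which translates the existence of a classical T-dual into the existence of a covering action $\beta$ for which the crossed product is continuous trace. Passing through Lemma \ref{lem-NCPdual} and the local NCP structure, this is equivalent to the existence of some $\beta$ with Mackey obstruction $f_\beta$ identically constant. If such a $\beta_0$ exists, then $f_{\beta_0}$ is trivially null-homotopic, so (iii) holds by the previous step and consequently $f_{\beta}$ is null-homotopic for every $\beta$, yielding (iv). Conversely, given (iv), the null-homotopy of a fixed $f_\beta$ has to be realized by a continuous perturbation of $\beta$ through exterior equivalences, producing a new covering action $\beta'$ with $f_{\beta'}\equiv 1$.

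The main obstacle will be this last implication: converting a null-homotopy of $f_\beta$ in $C(X,\TT^{n(n-1)/2})$ into an honest perturbation of the covering action $\beta$ to one whose Mackey obstruction is constant. This requires a careful analysis of how the Mackey class depends on $\beta$ modulo exterior equivalence, and it is precisely the point where one needs to feed our $K$-theoretic input into the Mathai-Rosenberg framework, which parametrizes the possible covering $\RR^n$-actions on $CT(Y,\delta)$ for fixed base action and relates their Mackey obstructions to the topology of $X$.
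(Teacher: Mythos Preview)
Your overall architecture is sound, and the equivalence (ii)$\Leftrightarrow$(iii) is indeed immediate. But you are misplacing the effort in two places.

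First, for (i)$\Leftrightarrow$(iv): this is not something you have to assemble yourself. The paper simply invokes \cite[Theorems 2.3 and 3.1]{MR2} as a black box for this equivalence. Your ``main obstacle'' --- upgrading a null-homotopy of $f_\beta$ to an exterior-equivalent covering action $\beta'$ with $f_{\beta'}$ constant --- is exactly the content of the Mathai--Rosenberg analysis, not something the present paper re-derives. So you should not be worrying about perturbing $\beta$; that work has already been done in \cite{MR2}.

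Second, for the link between the $K$-theory bundle and the Mackey map (what the paper treats as (ii)$\Leftrightarrow$(iv)), your plan to patch local identifications $\K_*(CT(Y,\delta))\cong \K_*((f|_U)^*C^*(H_n))$ into a global group-bundle isomorphism is plausible but underdetermined: you would have to check that the local isomorphisms agree on overlaps in a way that respects the $c_\gamma$'s, and nothing in your outline does this. The paper sidesteps this issue by computing the monodromy directly loop by loop. Given $\gamma\in\pi_1(X,x)$ represented by $\nu:\TT\to X$, one pulls back: since $H^2(\TT,\ZZ)=0$ the bundle $\nu^*Y$ trivializes, and since $H^3(\TT,\ZZ)=0$ Lemma \ref{lem-NCPdual} applies over $\TT$ to give
\[
\nu^*\big(CT(Y,\delta)\rtimes_\beta\RR^n\big)\sim_M C(\TT,\K)\rtimes_\alpha\ZZ^n\sim_M (f\circ\nu)^*C^*(H_n),
\]
the last step using that all commutative $\TT^n$-bundles over $\TT$ are trivial. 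This identifies the action of $\gamma$ on $K_*(C(Y_x,\K))$ with the formula \eqref{eq-action}, and then Proposition \ref{prop-inj} gives triviality of the monodromy if and only if $f$ is null-homotopic. No local-to-global patching is needed. Your approach could be made to work, but the loop-by-loop computation is both shorter and avoids the compatibility check you have not addressed.
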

\begin{proof} The equivalence of (ii) and (iii) is obvious and the equivalence of (i) and (iv) 
has been shown by Mathai and Rosenberg in \cite[Theorems 2.3 and 3.1]{MR2}. 
It thus remains to show the 
equivalence of (ii) and (iv). If $\beta$ is an action as in (iv) we have seen in 
Corollary \ref{cor-RKKtriv} that $CT(Y,\delta)$ is $\RKK$-equivalent to the T-dual
$CT(Y,\delta)\rtimes_{\beta}\RR^n$, and hence they have isomorphic 
K-theory bundles and conjugate actions of $\pi_1(X)$ on the fibres. 

Choose a fixed base-point $x$ and let $\nu:\TT\to X$ be a continuous map 
with $\nu(1)=x$ representing a given class $\gamma\in \pi_1(X,x)$.
Then $\nu^*Y$ is isomorphic to the trivial bundle $\TT\times \TT^n$, since 
$H^2(\TT,\ZZ)=\{0\}$. 
It follows that $\nu^*CT(Y,\delta)$ is isomorphic to $CT(\TT\times \TT^n, \nu^*\delta)$
as a C*-algebra bundle over $\TT$. Since $H^3(\TT,\ZZ)=\{0\}$, it is clear that 
$\nu^*\delta|_{\TT}$ vanishes, and thus it follows from Lemma \ref{lem-NCPdual}
that 
$$\nu^*\big(CT(Y,\delta)\rtimes_{\beta}\RR^n\big)=CT(\TT\times \TT^n, \nu^*\delta)\rtimes_{\nu^*\beta}\RR^n
\sim_M C(\TT,\K)\rtimes_{\alpha}\ZZ^n$$
where $\alpha$ is given by the restriction of $\nu^*\beta$ to $\ZZ^n$ evaluated at $\TT$.
In particular, the Mackey-obstruction map  $f_{\nu}=f\circ \nu$ of $\nu^*\beta$ coincides 
with the Mackey-obstruction map for $\alpha$. 
We then get 
$$\nu^*\big(CT(Y,\delta)\rtimes_{\beta}\RR^n\big)\sim_M 
C(\TT,\K)\rtimes_{\alpha}\ZZ^n
\sim_M
f_{\nu}^*C^*(H_n),
$$
where the last Morita equivalence follows from our classification of NCP torus bundles 
together with the fact that all commutative torus bundles over $\TT$ are trivial.
We may then conclude as in \S 6 that the map $M: \pi_1(X)\to 
GL(K_*(C(Y_x,\K))\cong GL(K_*(C^*(H_n)_{f(x)}))$ is given on $\gamma$
precisely by the formula given in equation (\ref{eq-action}) (notice that the identification 
$GL(K_*(C(Y_x,\K)))\cong GL(K_*(C^*(H_n)_{f(x)})))$ can be done independently from
the given choice of $\gamma$).
In particular, it follows then from Proposition \ref{prop-inj} that this action is trivial
if and only if $f$ is homotopic to the identity, which now finishes the proof.
\end{proof}

\section{Some final comments}
As mentioned in the introduction, the main object of this paper is to present an 
interesting class  of examples of C*-algebra bundles which are $K$- or $\KK$-fibrations
in the sense of Definition \ref{def-K-fibration}. The structure of being such fibration provides 
a new topological  invariant, namely the $K$-theory group bundle of the fibration, and 
the results of this paper show that this invariant can be quite useful for the study 
of the structure of such non-commutative fibrations. In particular, the $K$-theory group bundle
is an obstruction for $\RKK$-triviality.

In the forthcoming paper \cite{ENO}, we shall show that there are many more 
interesting examples of $K$- or $\KK$-fibrations, and we shall proof a complete version 
of the Leray-Serre spectral sequence  for all such fibrations, which will be a much stronger 
obstruction for $\RKK$-equivalence. In fact, we shall see in that paper that 
in case of NCP $\TT^n$-bundles the spectral sequence combines with the results 
of this paper  give a complete description which NCP $\TT^n$-bundles are 
$\RKK$-trivial (i.e., $\RKK$-equivalent to $C_0(X\times\TT^n)$).

\def\mathcs{{\normalshape\text{C}}^{\displaystyle *}}


\begin{thebibliography}{10} 
   
   
   \bibitem{AP}
        J. Anderson, W.  Paschke.
        \emph{The rotation algebra.} Houston J. Math. 15 (1989), no. 1, 1--26.
        
%
%
  \bibitem{Black} B. Blackadar.
$K$-theory for operator algebras. Second edition. Mathematical Sciences Research Institute Publications, 5. Cambridge University Press, Cambridge, 1998. xx+300 pp.
  \bibitem{Connes} A. Connes.
Noncommutative geometry. Academic Press, Inc., San Diego, CA, 1994. xiv+661 pp.
%
  
%
%
%
%
%
    \bibitem{CKRW}
    D. Crocker, A. Kumjian,I. Raeburn, D.P. Williams. 
    \emph{An equivariant Brauer group and actions of groups on $C\sp *$-algebras.}
     J. Funct. Anal. 146 (1997), no. 1, 151--184.
     
     \bibitem{Dix} J. Dixmier.
     C*-Algebras. 
     North-Holland Mathematical Library, Vol. 15. North-Holland Publishing Co., Amsterdam-New York-Oxford, 1977. xiii+492 pp.
     
     \bibitem{ech} S. Echterhoff
     \emph{On induced covariant systems.}  Proc. Amer. Math. Soc. 108 (1990), no. 3, 703--706.
    
   \bibitem{E-memoir}
   S.  Echterhoff. \emph{Crossed products with continuous trace.}
    Mem. Amer. Math. Soc. 123 (1996), no. 586.
    
    \bibitem{EKQR}
   S.  Echterhoff, S. Kaliszewski, J. Quigg, I. Raeburn.
   \emph{ A categorical approach to imprimitivity theorems for $C\sp *$-dynamical systems.}
    Mem. Amer. Math. Soc. 180 (2006), no. 850.
    
    \bibitem{EN}
    S. Echterhoff, R. Nest.
    \emph{The structure of the Brauer group and crossed products of $C\sb 0(X)$-linear group actions on $C\sb 0(X,\K)$. }
    Trans. Amer. Math. Soc. 353 (2001), no. 9, 3685--3712.
    
    \bibitem{ENO} S. Echterhoff, R. Nest and  H. Oyono-Oyono.  \emph{Fibrations with noncommutative fibers}.
    In preparation.
    
    \bibitem{EW1} S. Echterhoff and D.P.Williams.
 \emph{Locally inner actions on $C\sb 0(X)$-algebras.}
  J. Operator Theory 45 (2001), no. 1, 131--160.
 
 \bibitem{EW2} S. Echterhoff and D.P.Williams.
 \emph{Crossed products by $C_0(X)$-actions.}
 J. Funct. Anal. {\bf 158} (1998), no. 1, 113--151. 
 
 
 \bibitem{Goot} E.C. Gootman.
\emph{Abelian group actions on type ${\rm I}\;C\sp *$-algebras.}
Operator algebras and their connections with topology and ergodic theory 
(Bu\c steni, 1983), 152--169, Lecture Notes in Math., 1132, Springer, Berlin, 1985.
 
 \bibitem{Green0} 
P. Green, \emph{$C\sp*$-algebras of transformation groups with smooth orbit space.}
 Pacific J. Math. 72 (1977), no. 1, 71--97.
 
 
    \bibitem{HK} N. Higson and G. Kasparov,\emph{ $E$-theory and  
$\KK$-theory for groups 
    which act properly and isometrically on Hilbert space}, 
Invent. Math. {\bf 144}, 23--74, (2001). 
 
 
\bibitem{Huse}
D. Husemoller.
 Fibre bundles. Third edition. 
 Graduate Texts in Mathematics, 20. Springer-Verlag, New York, 1994. 
 
 
 
 \bibitem{Kas0}
 G. Kasparov.
\emph{$K$-theory, group $C\sp *$-algebras, and higher signatures (conspectus).}
 Novikov conjectures, index theorems and rigidity, Vol. 1 (Oberwolfach, 1993), 101--146, London Math. Soc. Lecture Note Ser., 226, Cambridge Univ. Press, Cambridge, 1995.
 
 
\bibitem{Kas1} G. Kasparov. \emph{Equivariant 
      $\KK$-theory and the Novikov conjecture}, Invent. Math. 
    \textbf{91},  147-201 (1988).   



%
%
  
  
  \bibitem{mo2}
 C.C  Moore. 
 \emph{Extensions and low dimensional cohomology theory of locally compact groups. I, II. }
 Trans. Amer. Math. Soc. 113 1964 40--63.
  
  \bibitem{mo3}
 C.C.  Moore.
 \emph{ Group extensions and cohomology for locally compact groups. III. }
 Trans. Amer. Math. Soc. 221 (1976), no. 1, 1--33.

\bibitem{Ped} G. K.  Pedersen.
 $C\sp{*} $-algebras and their automorphism groups. London Mathematical Society Monographs, 14. Academic Press, Inc., London-New York, 1979.
 
 
\bibitem{KW}
E. Kirchberg, S.Wassermann.
 \emph{Exact groups and continuous bundles of $C\sp *$-algebras.} Math. Ann. 315 (1999), no. 2, 169--203.



\bibitem{MR1}
V. Mathai, J. Rosenberg.
\emph{$T$-duality for torus bundles with $H$-fluxes via noncommutative topology.}
 Comm. Math. Phys. 253 (2005), no. 3, 705--721. 



\bibitem{MR2}
V. Mathai, J. Rosenberg.
\emph{$T$-duality for torus bundles with $H$-fluxes via noncommutative topology. II. The high-dimensional case and the $T$-duality group.}
 Adv. Theor. Math. Phys. 10 (2006), no. 1, 123--158. 
 
     
%
%
%

\bibitem{OR}
D. Olesen, I. Raeburn.
\emph{Pointwise unitary automorphism groups.}
 J. Funct. Anal. 93 (1990), no. 2, 278--309. 
    
   \bibitem{Rae} 
I. Raeburn, \emph{Induced ${C}^*$-algebras and a symmetric imprimitivity theorem}, 
  Math. Ann. \textbf{280}, 369--387 (1988). 
  
  \bibitem{RR}
I.  Raeburn, J. Rosenberg.
\emph{ Crossed products of continuous-trace $C\sp *$-algebras by smooth actions.}
 Trans. Amer. Math. Soc. 305 (1988), no. 1, 1--45.
  
  \bibitem{RW}
  I. Raeburn, D.P.  Williams. 
  \emph{Pull-backs of $C\sp *$-algebras and crossed products by certain diagonal actions.} 
  Trans. Amer. Math. Soc. 287 (1985), no. 2, 755--777. 
     
   \bibitem{Rie} 
M. A. Rieffel, \emph{The cancellation theorem for projective module
  over irrational rotation ${C}^*$-algebras}, 
  Proc. London Math Soc. (3), \textbf{47}, 285--302 (1983).
 
%
%
%

\bibitem{Stn} N. Steenrod, \emph{The topology of fibre bundles},
  Princetown University Press, (1951).
 
%
%
%
%
%
%

\bibitem{Will}
D. P. Williams. 
Crossed products of $C{\sp *}$-algebras. Mathematical Surveys and Monographs, 134. American Mathematical Society, Providence, RI, 2007.
    
    \end{thebibliography}
\end{document}